\newcommand{\B}{\mathcal B}
\renewcommand{\H}{\mathcal{H}}
\renewcommand{\L}{\mathcal{L}}
\newcommand{\X}{\mathcal{X}}
\newcommand{\R}{\mathbb{R}}
\newcommand{\NN}{\mathcal{N}}
\newcommand{\Om}{\Omega}
\renewcommand{\S}{\Sigma}
\renewcommand{\a}{\alpha}
\newcommand{\g}{\gamma}
\newcommand{\de}{\delta}
\newcommand{\e}{\varepsilon}
\renewcommand{\k}{\kappa}
\renewcommand{\l}{\lambda}
\newcommand{\s}{\sigma}
\newcommand{\om}{\omega}
\newcommand{\vphi}{\varphi}
\newcommand{\Div}{{\rm div}\,}
\newcommand{\dist}{{\rm dist}}
\newcommand{\spt}{{\rm spt}}
\newcommand{\vol}{{\rm vol}\,}
\newcommand{\ov}{\overline}
\newcommand{\pa}{\partial}
\newcommand{\cc}{\subset\subset}
\DeclareMathOperator{\reach}{reach}
\DeclareMathOperator{\Unp}{{\rm UP}}
\newcommand{\Der}{\mathrm{D}}
\newcommand{\der}{\mathrm{d}}
\theoremstyle{plain}
\newtheorem{theorem}{Theorem}[section]
\newtheorem{lemma}[theorem]{Lemma}
\newtheorem*{theorem*}{Theorem}
\newtheorem*{corollary*}{Corollary}
\theoremstyle{definition}
\newtheorem{remark}{Remark}[section]
\newtheorem*{notation*}{Notation}
\numberwithin{equation}{section}
\numberwithin{figure}{section}
\newcommand{\id}{{\rm id}\,}
\newcommand{\cred}{\color{red}}
\title{Rigidity and compactness \\ with constant mean curvature \\ in warped product manifolds}
\author{Francesco Maggi}
\address{Department of Mathematics, The University of Texas at Austin, 2515 Speedway, Stop C1200, Austin TX 78712-1202, United States of America}
\email{maggi@math.utexas.edu}
\author{Mario Santilli}
\address{Dipartimento di Ingegneria e Scienze dell'Informazione e Matematica, Universit\'a degli Studi dell'Aquila, 67100 L'Aquila, Italy}
\email{mario.santilli@univaq.it}
\begin{document}

\begin{abstract}
{\rm We prove the rigidity of {\it rectifiable} boundaries with constant {\it distributional} mean curvature in the Brendle class of warped product manifolds (which includes important models in General Relativity, like the deSitter--Schwarzschild and Reissner--Nordstrom manifolds). As a corollary we characterize limits of rectifiable boundaries whose mean curvatures converge, as distributions, to a constant. The latter result is new, and requires the full strength of distributional CMC-rigidity, even when one considers smooth boundaries whose mean curvature oscillations vanish in arbitrarily strong $C^k$-norms. Our method also establishes that rectifiable boundaries of sets of finite perimeter in the hyperbolic space with constant distributional mean curvature are finite unions of possibly mutually tangent geodesic spheres.} 
\end{abstract}


\maketitle

\tableofcontents

\vspace{-1cm}

\section{Introduction} \subsection{Overview} We move from two recent extensions of the classical Alexandrov theorem \cite{alexandrov}: {\it in the Euclidean space, spheres are the only connected, constant mean curvature (CMC) boundaries enclosing finite volumes}:

\medskip

\noindent (i) In \cite{brendle}, Brendle has proved CMC-rigidity in a class of warped product Riemannian manifolds which includes important models in General Relativity, like the deSitter--Schwarzschild and the Reissner--Nordstrom manifolds. In dimension $3\le n\le 7$, when isoperimetric sets are smooth by local regularity theorems, Brendle's theorem allows one to solve the ``horizon homologous'' isoperimetric problem in this class of warped product manifolds. In turn, since the works of Eichmair and Metzger \cite{EichmairMetzgerJDG12,EichmairMetzgerJDG13,EichmairMetzgerINV}, the study of horizon-homologous isoperimetric problems in the large volume regime has played a prominent role in the analysis of the Huisken--Yau problem \cite{huisken_yau}
-- see, e.g. \cite{chodoshCMP,CarlottoChodoshEichmair,chodoshEichmairVolkmanJDG,chodoshEichmairShiZhu,chodoshEichmairShiYu,ChodoshEichmairDuke}.

\medskip

\noindent (ii) In \cite{delgadinomaggi}, Delgadino and the first-named author have extended the Alexandrov theorem to the class of (Borel) sets with finite volume, finite {\it distributional} perimeter, and constant {\it distributional} mean curvature. CMC-rigidity in the distributional setting in turn implies, by basic varifolds theory, a general {\it compactness theorem for almost CMC-boundaries}: in the Euclidean space, finite unions of disjoint spheres with equal radii are the only possible limits of sequences of boundaries converging in volume and in perimeter, and whose mean curvatures converge, as distributions, to a constant. With stronger controls on the mean curvature oscillation one can even provide quantitative rates of convergence towards finite unions of balls, as done, for example, in \cite{ciraolomaggi2017,delgadinomaggimihailaneumayer,julin-nini,julin-morini-ponsiglione-spadaro}. In addition to their geometric interest, these results find applications to the resolution of geometric variational problems (see, e.g., \cite{ciraolomaggi2017,delgadinoWeser} for the characterization of {\it local} minimizers of ``free droplet energies'') and to the long time behavior of the volume-preserving mean curvature flow (see, e.g., \cite{julin-nini,morini-ponsiglione-spadaro,julin-morini-ponsiglione-spadaro}).

\medskip
\vspace{0.2cm}

The main results of this paper are the distributional version of the Alexandrov theorem in the Brendle class (Theorem \ref{thm main}) and a consequent compactness theorem for almost-CMC boundaries (Theorem \ref{thm compactness 1}).

\medskip
\vspace{0.2cm}

An interesting aspect of the method of proof of Theorem \ref{thm main} is that we avoid the use of ``smoothness intensive'' tools, like the Sch\"atzle {\it strong} maximum principle for integer varifolds \cite{schatzle}, which was central in \cite{delgadinomaggi}; or the Greene--Wu ``approximation by convolution'' theorem \cite{GW1,GW2}, which was crucially used in \cite{brendle}. Our method indeed employs the {\it metric} notion of set of positive reach in combination with one-sided {\it viscous} mean-curvature/dimensions bounds (as formulated by White in \cite{whiteMC}; see also \cite{caffarellicordoba}); and, as a by-product of these efforts, throughout our analysis, we only need to use the {\it weak} maximum principle (meant as ``one-sided inclusion and contact imply mean curvature ordering'').  This synthetic approach was originally developed by the second author in \cite{Santilli2019heintze-karcher} to prove the Heintze-Karcher inequality for sets of finite perimeter in Euclidean space (see also \cite[Theorem 4.2]{Santilli24}), and later implemented in the anisotropic setting in \cite{deRosaKolaSantilli}. We take the success of this synthetic approach as a significant indication that {\it the Alexandrov theorem should hold in some metric version of the Brendle class}. 

\subsection{The Brendle class and the main theorem} Given $n\ge 3$, we denote by $\B_n$ the class of the $n$-dimensional manifolds $(M,g)$ of the form
\begin{equation}
  \label{warped product}
  M=N\times[0,\bar{r})\,,\qquad g=dr\otimes dr+h(r)^2\,g_N\,,
\end{equation}
for some $\bar{r}\in(0,\infty]$,  compact $(n-1)$-dimensional Riemannian manifold $(N,g_N)$, and smooth positive function $h:[0,\bar{r})\to\R$, such that:
\begin{enumerate}
  \item[(H0)] for some $\rho \in \mathbb{R}$, ${\rm Ric}_N\ge\rho(n-2)\,g_N$ on $N$;
  \item[(H1)] $h'(0)=0$ and $h''(0)>0$;
  \item[(H2)] $h'>0$ on $(0,\bar{r})$;
  \item[(H3)] $2\,(h''/h)+(n-2)\,[((h')^2-\rho)/h^2]$ is increasing on $(0,\bar{r})$\,;
  \item[(H4)] $(h''/h)+[(\rho-(h')^2)/h^2]$ is positive on $(0,\bar{r})$\,.
\end{enumerate}
To obtain geometric interpretations of these conditions we denote by
\[
M^\circ=N\times(0,\bar{r})\,,\qquad  N_0=N\times\{0\}\,,\qquad N_t=N\times\{t\}\quad (t>0)\,,
\]
the {\bf interior}, the {\bf horizon}, and the {\bf slices} of $M$, and notice that
\begin{eqnarray}
\label{Ricci h}
{\rm Ric}_M\!\!\!\!&=&\!\!\!{\rm Ric}_N-\big\{h\,h''+(n-2)\,(h')^2\big\}\,g_N-(n-1)\,\big(h''\big/h\big)\,dr\otimes dr\,,\,\,\,\,
\\
\label{scalar h}
{\rm R}_M\!\!\!\!&=&\!\!\!\big({\rm R}_N\big/h^2\big)-(n-1)\,\Big(2\,\big(h''\big/h\big)+(n-2)\,\big(h'\big/h\big)^2\Big)\,.
\end{eqnarray}
The scalar mean curvature of $N_t$ with respect to its $g$-unit normal vector field $\pa/\pa r$ is $H_{N_t}=\langle\vec{H}_{N_t},\pa/\pa r\rangle_g=(n-1)\,h'(t)\big/h(t)$, so that the horizon of $M$ is a minimal surface by ${\rm (H1)}$, and the slices of $M$ have {\it positive} CMC (w.r.t. to $\pa/\pa r$) thanks to ${\rm (H2)}$. We next notice that ${\rm (H3)}$ implies (in combination with \eqref{scalar h}, ${\rm (H0)}$, and ${\rm (H2)}$) that ${\rm R}_M$ is {\it decreasing} along $\pa/\pa r$. Finally, while \eqref{Ricci h} implies that $\pa/\pa r$ is an eigenvector of ${\rm Ric}_M$, ${\rm (H4)}$ (combined with ${\rm (H0)}$) adds the information that $\pa/\pa r$ is a {\it simple} eigenvector. We then have:

\medskip

\noindent {\bf Brendle's theorem}: \cite[Theorem 1.1]{brendle} {\it If $n\ge 3$, $(M,g)\in\B_n$, and $\Sigma$ is a smooth closed, embedded, orientable, CMC hypersurface in $M$, then $\Sigma$ is  a slice of $M$.}

\begin{remark}[Dropping ${\rm (H4)}$]
  {\rm A simple remark (which seems to have gone uncommented so far) is that Brendle's theorem also holds in the class $\B_n^*$ of those $(M,g)$ satisfying \eqref{warped product}, ${\rm (H0)}$, ${\rm (H1)}$, ${\rm (H2)}$, and
  \medskip
  \begin{enumerate}
  \item[(H3)$^*$] $2\,(h''/h)+(n-2)\,[((h')^2-\rho)/h^2]$ is {\it strictly} increasing on $(0,\bar{r})$\,.
  \end{enumerate}
  \medskip
  In other words, condition ${\rm (H4)}$ is not needed to conclude rigidity as soon as ${\rm R}_M$ is {\it strictly} decreasing along $\pa/\pa r$. (For more details on this point, see the discussions in Section \ref{section organization} and Remark \ref{remark on the relation} below.) In terms of applications to General Relativity, it is interesting to notice that while the Reissner--Nordstrom manifolds belong to the class $\B_n^*$, the deSitter--Schwarzschild manifolds do not; in particular, a stronger stability mechanism for almost-CMC hypersurfaces is at work in the former class than in the latter; see Appendix \ref{appendix h3star} for more information.}
\end{remark}

\begin{remark}[Formulation with boundaries]
  {\rm Brendle's theorem is, actually, a statement about {\it boundaries} in $M$. Indeed, as noticed also in \cite[Section 3]{brendle}, under the assumptions of Brendle's theorem on $\Sigma$,
  \begin{equation}\label{from Sigma to Omega}\tag{$\Sigma\leftrightarrow\Omega$}
  \begin{split}
  &\mbox{there is $(a,b)\cc(0,\bar{r})$ and $\Om\subset M$ open such that}
  \\
  &\mbox{$\Sigma\subset N\times(a,b)$ and either $\pa\Om=\Sigma$ or $\pa\Om=\Sigma\cup N_0$}\,.
  \end{split}
  \end{equation}}
\end{remark}

Now, as explained in more detail later on, there are two basic geometric problems -- the characterization of horizon-homologous isoperimetric regions and the study of sequences of (smooth) boundaries with vanishing mean curvature oscillation -- that call for the extension of Brendle's theorem to the class of {\it sets of finite perimeter}. This extension is the content of our main theorem, Theorem \ref{thm main} below. Referring to \cite{maggiBOOK} for a complete discussion of the subject, we just recall here that a Borel set $\Om$ in $(M,g)$ is a {\it set of finite perimeter} if ${\rm Per}(\Om):=\sup\{\small\int_\Om\Div X\,d\H^n:X\in\X(M)\,,|X|_g\le1\}$ is finite (where $\mathcal{X}(M)=\{\mbox{smooth vector fields on $M$}\}$). Then one can define the {\it reduced boundary} $\pa^*\Om$ (a locally $\H^{n-1}$-rectifiable set with ${\rm Per}(\Om)=\H^{n-1}(\pa^*\Om)$) and the measure-theoretic outer normal $\nu_\Om$ (a Borel $g$-unit vector field defined on $\pa^*\Om$) so that the distributional divergence theorem  $\small{\int}_\Om\,\Div\,X\,d\H^n=\small{\int}_{\pa^*\Om}\langle X,\nu_\Om\rangle_g\,d\H^{n-1}$ holds for every $X\in\mathcal{X}(M)$. Finally, one says that $H$ is the {\it distributional mean curvature} of $\pa^*\Om$ with respect to $\nu_\Om$, if $H$ is summable in $\H^{n-1}\llcorner\pa^*\Om$, and
\[
\int_{\pa^*\Om}\,\Div^{\pa^*\Om} X\,d\H^{n-1}=\int_{\pa^*\Om} H\,\langle X,\nu_\Om\rangle_g\,d\H^{n-1}\,,\qquad\forall X\in\mathcal{X}(M)\,,
 \]
where $\Div^{\pa^*\Om}X:=\Div X-\langle\nabla_{\nu_\Om}X,\nu_\Om\rangle_g$. We thus have the following {\it distributional} version of Brendle's theorem:

\begin{theorem}[Rigidity]\label{thm main}
  If $n\ge 3$, $(M,g)\in\B_n\cup\B_n^*$, $\Om$ is a set of finite perimeter in $M$ such that $ M^\circ \cap  \overline{\pa^\ast \Omega}$ is compact in $M$ and $\l\in\R$ is such that
  \begin{equation}
    \label{distributional CMC}
      \int_{M^\circ\cap\pa^*\Om}\!\!\Div^{\pa^*\Om}X\,d\H^{n-1}=\l\,\int_{M^\circ\cap\pa^*\Om} \langle X,\nu_\Om\rangle_g\,d\H^{n-1}\,,
  \end{equation}
  for every $X\in\X(M)$, then, for some $t_0\in(0,\bar{r})$, either $\Om$ is $\H^n$-equivalent to $N\times(0,t_0)$ (and $\l>0$) or $\Om$ is $\H^n$-equivalent to $N\times(t_0,\bar{r})$ (and $\l<0$).
\end{theorem}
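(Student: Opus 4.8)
The plan is to carry out, in a low-regularity setting, the Heintze--Karcher/Minkowski scheme that Brendle \cite{brendle} uses in the smooth case, replacing the smoothness-intensive inputs by the metric regularity that \eqref{distributional CMC} forces on $\Om$, via sets of positive reach and White's viscosity mean-curvature bounds \cite{whiteMC}; compactness of $M^\circ\cap\overline{\pa^*\Om}$ lets us localize everything to a slab $N\times[a,b]$ with $(a,b)\cc(0,\bar r)$. First I would normalize and fix the sign of $\l$. Since $\CF\Om$ is locally constant off $\overline{\pa^*\Om}$, on $N\times(b',\bar r)$ (any $b'\in(b,\bar r)$) and on $N\times(0,a')$ (any $a'\in(0,a)$) the set $\Om$ is $\H^n$-equivalent to a full or to an empty set; if $\H^{n-1}(M^\circ\cap\pa^*\Om)=0$ then $\pa^*\Om\subset N_0$ and $\Om$ is $\H^n$-equivalent to $\emptyset$ or to $M$, a degenerate case we tacitly exclude. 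Testing \eqref{distributional CMC} with the conformal field $X=h(r)\,\pa/\pa r$, for which $\nabla X=h'(r)\,{\rm Id}$ and hence $\Div^{\pa^*\Om}X=(n-1)\,h'(r)$, yields the distributional Minkowski identity
\begin{equation*}
  (n-1)\int_{M^\circ\cap\pa^*\Om}h'(r)\,d\H^{n-1}=\l\int_{M^\circ\cap\pa^*\Om}h(r)\,\langle\pa/\pa r,\nu_\Om\rangle_g\,d\H^{n-1}\,.
\end{equation*}
By ${\rm (H2)}$ the left-hand side is strictly positive, so $\l\neq0$; replacing $\Om$ by $M^\circ\setminus\Om$ reverses $\nu_\Om$ and turns $\l$ into $-\l$ while preserving all hypotheses, so we may assume $\l>0$. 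A computation with the divergence theorem applied to $\chi(r)\,h(r)\,\pa/\pa r$, for a radial cutoff $\chi\equiv1$ on $N\times[0,b]$ supported in $N\times[0,b']$, shows that if $\Om$ were $\H^n$-full near the $\bar r$-end the right-hand side above would be $\le0$, contradicting positivity of the left-hand side; hence $\Om$ is $\H^n$-null near the $\bar r$-end, and the same computation gives the sharp identity $(n-1)\int_{M^\circ\cap\pa^*\Om}h'\,d\H^{n-1}=\l\,\big(n\int_\Om h'\,d\H^n+h(0)\,\H^{n-1}(N_0\cap\pa\Om)\big)$. In particular a suitable representative of $\Om$ is a bounded open set $U$, with $\overline U$ compact in $M$, which is either disjoint from $N_0$ or horizon-homologous, $\pa U=(M^\circ\cap\pa U)\cup N_0$.

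The next, and most delicate, step is the metric regularity. From \eqref{distributional CMC}, $M^\circ\cap\pa^*\Om$ carries a multiplicity-one integral varifold with bounded generalized mean curvature $-\l\,\nu_\Om$; White's one-sided viscosity mean-curvature/dimension bounds \cite{whiteMC} should upgrade this to the statement that $U$ has the regularity of a set of positive reach -- a uniform interior ball condition (of radius comparable to $(n-1)/\l$), so that $\H^{n-1}(\pa U\setminus\pa^*U)=0$, $\nu_U$ is defined $\H^{n-1}$-a.e., the approximate second fundamental form of $\pa U$ exists $\H^{n-1}$-a.e.\ with principal curvatures bounded above, and one-sided contact of $\pa U$ by slices or geodesic spheres yields the weak maximum principle. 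Then $\Phi_t(x)=\exp_x(-t\,\nu_U(x))$, $x\in\pa^*U$, is a measurable surjection onto $U$ (up to $\H^n$-null sets) to which the area formula applies, and transcribing Brendle's argument -- Riccati comparison along each normal geodesic driven by ${\rm (H0)}$ and the warped identities \eqref{Ricci h}--\eqref{scalar h}, integration of the weighted Jacobian $h'(r)\,{\rm Jac}\,\Phi_t$ up to the focal time (the vanishing $h'(0)=0$ from ${\rm (H1)}$ keeping the horizon harmless), and AM--GM on the a.e.-defined principal curvatures -- should give the distributional Heintze--Karcher inequality
\begin{equation*}
  \int_{M^\circ\cap\pa^*\Om}\frac{(n-1)\,h'(r)}{H}\,d\H^{n-1}\ \ge\ n\int_\Om h'\,d\H^n+h(0)\,\H^{n-1}(N_0\cap\pa\Om)\,,
\end{equation*}
sharp, with equality if and only if $\pa^*U$ is totally umbilic $\H^{n-1}$-a.e.

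To finish, insert $H\equiv\l$ on the left above and compare with the sharp Minkowski identity from Step 1: the two sides of Heintze--Karcher then coincide, so $\pa^*U$ is totally umbilic $\H^{n-1}$-a.e. Analyzing this equality $\H^{n-1}$-a.e.\ and combining with elliptic regularity makes $\pa^*U$ a smooth, closed, embedded, orientable CMC hypersurface -- the pointwise umbilicity excluding the boundary ``corners'' that positive reach alone would allow -- so Brendle's theorem, in the form extended to $\B_n^*$ in the remark above (which is exactly where ${\rm (H4)}$, resp.\ ${\rm (H3)}^*$, enters), together with connectedness of $N$ identifies $\pa^*U$ with a finite union of slices. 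A set of finite perimeter whose reduced boundary is a union of slices is a union of connected components of the complement of those slices, and for such a set to carry a one-signed constant distributional mean curvature $\l>0$ with respect to its outer normal exactly one slice $N_{t_0}$ can occur, because $\CF U$ is $\{0,1\}$-valued on each component. Thus $\pa^*U=N_{t_0}$ ($\H^{n-1}$-a.e.) with $t_0\in(0,\bar r)$, whence $U$ is $\H^n$-equivalent to $N\times(0,t_0)$ or to $N\times(t_0,\bar r)$; the latter is ruled out since $U$ is $\H^n$-null near the $\bar r$-end, so $U=N\times(0,t_0)$. Undoing the normalization of Step 1 delivers the two alternatives, with $\l>0$ in the first and $\l<0$ in the second.

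I expect the main obstacles to be precisely the two low-regularity points above: (i) deriving from \eqref{distributional CMC} positive-reach regularity of $U$ strong enough to legitimize Brendle's covering and Jacobian estimates $\H^{n-1}$-a.e.\ while carrying the horizon boundary term correctly; and (ii) the equality case -- promoting ``$\pa^*U$ totally umbilic $\H^{n-1}$-a.e.'' to ``$\pa^*U$ is a single slice'' using only the weak maximum principle, i.e.\ without Sch\"atzle's strong maximum principle for integer varifolds.
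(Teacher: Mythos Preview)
Your overall architecture---Minkowski identity via $X=h\,\pa/\pa r$, distributional Heintze--Karcher, equality forcing umbilicality---matches the paper's, and you have correctly located the two hard points. But your proposed resolutions of both are where the argument breaks.

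On your point (ii): the step ``umbilicality $\H^{n-1}$-a.e.\ plus elliptic regularity makes $\pa^*U$ a smooth, \emph{closed}, embedded hypersurface, so apply Brendle's theorem directly'' is exactly the difficulty the paper is built to circumvent, and it does not go through. Allard's theorem already gives, from \eqref{distributional CMC} alone and before any HK analysis, that $\Sigma:=M^\circ\cap\pa^*\Om$ is a smooth embedded CMC hypersurface with $\H^{n-1}(\ov\Sigma\setminus\Sigma)=0$; umbilicality adds nothing to this regularity. The set $\ov\Sigma\setminus\Sigma$ may still be nonempty, $\Sigma$ may have countably many components, and there is no mechanism by which a.e.-umbilicality ``excludes corners'' or closes $\Sigma$ up (at a singular point the blow-up can be a hyperplane with multiplicity $\ge 2$, so Allard does not apply there). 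The paper never reduces to the smooth Brendle theorem. Instead, from the HK equality it extracts, besides umbilicality, two further conditions: (E2) $M\setminus\Om$ has positive reach in the conformal metric $g^*=f^{-2}g$; and (E3) under ${\rm (H3)^*}$, $g_N(\nu_\Om,\nu_\Om)=0$ on $\Sigma$. For $(M,g)\in\B_n^*$, (E3) immediately gives $\nu_\Om\parallel\pa/\pa r$ and rigidity follows from an elementary finite-perimeter fact. For $(M,g)\in\B_n$, umbilicality, the Codazzi equations, and ${\rm (H4)}$ give only the dichotomy $\nu_\Om\parallel\pa/\pa r$ or $\nu_\Om\perp\pa/\pa r$ at each point of $\Sigma$; the positive-reach condition (E2) is then used, via Kleinjohann's $C^{1,1}$ level-set structure (Theorem~\ref{thm kleinjohann}), to show that every $(p,\eta)\in\mathcal{N}^1(M\setminus\Om)$ is a limit of pairs $(p_j,-f(p_j)\nu_\Om(p_j))$ coming from a single connected component of $\Sigma$, which rules out the orthogonal alternative at the lowest contact slice and enables the sliding argument. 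Your proposal contains neither (E2)/(E3) nor this mechanism.

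On your point (i): White's theorem (Theorem~\ref{thm white}) says only that $\ov\Sigma$ is a White $(n-1,\l)$-set---a viscosity bound on the \emph{mean} curvature---and this does \emph{not} yield a uniform interior ball condition or positive reach for $U$. In the paper positive reach is an \emph{output} of the equality analysis (from $R_\Sigma=R_\Sigma^*\ge c(\Sigma)>0$ combined with Theorem~\ref{thm Santilli riem} and Remark~\ref{remark pos reach}), it is $M\setminus\Om$ (not $\Om$) that acquires it, and in the metric $g^*$ (not $g$). The HK inequality itself is proved \emph{without} any positive-reach input: the White $(n-1,\l)$-property of $\ov\Sigma$, together with the Lusin-type property of normal bundles (Theorem~\ref{thm Santilli riem}), is what replaces the smooth covering argument and yields $\H^n(\Om\setminus\Phi(A_\Sigma^*))=0$.
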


\begin{remark}
  {\rm By Allard's regularity theorem \cite{Allard,SimonLN,camilloINVITE}, \eqref{distributional CMC} implies that $\Sigma=\pa^*\Om$ is a smooth, CMC hypersurface in $M$ with $\H^{n-1}(\ov\Sigma\setminus\Sigma)=0$. Since $\Sigma$ {\it is not necessarily closed}, rigidity cannot be deduced by the direct application of Brendle's theorem. The difficulty addressed in Theorem \ref{thm main} is expressing how the distributional CMC condition \eqref{distributional CMC} ``ties together'' the (potentially countably many) connected components of $\Sigma$, and forces them to align into a single slice, rather than, say, allowing them to combine through the singular set $\ov\Sigma\setminus\Sigma$ into a non-slice CMC hypersurface.}
\end{remark}


\begin{remark}
  {\rm Condition \eqref{distributional CMC} is equivalent to ask that, if $f_t$ is a smooth volume-preserving flow of $\Om$ ($f_0=\id$, $(\pa f/\pa t)|_{t=0}=X\in\X(M)$, and $\H^n(f_t(\Om))=\H^n(\Om)$ for every $|t|$ small), then
  \[
  \big(d\big/dt\big)\big|_{t=0}\,{\rm Per}(f_t(\Om))=0\,.
  \]
  Theorem \ref{thm main} then says that {\it among sets of finite perimeter, slices are the only volume-constrained critical points of the area functional in $(M,g)\in\B_n\cup\B_n^*$}.}
\end{remark}

\begin{remark}[Necessity of ``$M^\circ\cap\ov{\pa^*\Om}$ compact'' in Theorem \ref{thm main}]
  {\rm If $v>0$ is small enough and $\Om_v$ is a minimizer of $\H^{n-1}(M^\circ\cap\pa^*\Om)$ among sets $\Om\subset M$ with $\H^n(\Om)=v$, then $M^\circ\cap\ov{\pa^*\Om_v}$ is a smooth CMC hypersurface, diffeomorphic to a hemisphere sitting on the horizon (see, e.g., \cite{maggimihaila} for a detailed analysis of this kind of result in the capillarity setting). Alternatively, one can first apply the perturbative construction of Pacard and Xu \cite{PacardXu} on the horizon of the {\it doubled} Schwarzschild manifold, as described in \cite{BrEichJDG13}. Either way, one obtains non-slice, CMC hypersurfaces bounding sets $\Om$.}
\end{remark}

\subsection{Compactness for almost-CMC boundaries} Theorem \ref{thm main} is of course strongly motivated by the following natural {\bf compactness problem for almost-CMC hypersurfaces}, which can be formulated, in very general terms, as follows:
\begin{equation}\tag{{\rm CP}}
  \label{CP}
  \begin{split}
    &\mbox{{\it If $(M,g)\in\mathcal{B}_n\cup\B_n^*$, does every sequence $\{\S_j\}_j$ of closed,}}
    \\
    &\mbox{{\it embedded, orientable smooth hypersurfaces in $M$}}
    \\
    &\mbox{{\it whose scalar mean curvatures in $(M,g)$ converge}}
    \\
    &\mbox{{\it to a constant, have slices as their only possible limits?}}
  \end{split}
\end{equation}
This basic question appears naturally in several contexts. Two important examples are: the analysis of the Huisken--Yau problem \cite{huisken_yau,qingtian2007,NevesTian1,NevesTian,Huang,lammMetzgerSchulze,BrendleEichINV2014,nerzJDG18,chodoshEichmairCrelle}, e.g., an outlying CMC hypersurface in an asymptotically Schwarzschild manifold can be seen as an almost-CMC hypersurface in the Schwarzschild manifold; and the study of the long-time behavior of the volume-preserving mean curvature flow -- since the $L^2$-oscillation of the mean curvature is the dissipation of the flow.

\medskip

In light of \eqref{from Sigma to Omega}, for each $\Sigma_j$ in \eqref{CP} there are open sets $\Om_j$ in $M$ and intervals $(a_j,b_j)\cc(0,\bar{r})$ such that $\Sigma_j\subset N\times(a_j,b_j)$ and (up to extracting subsequences) either $\Sigma_j=\pa\Om_j$ (for every $j$) or $\Sigma_j\cup N_0=\pa\Om_j$ (for every $j$). In both cases, under the natural set of assumptions that
\begin{eqnarray}
  \mbox{$a=\inf_j\,a_j>0$}\,,\qquad\mbox{$b=\sup_j b_j<\bar{r}$}\,,\qquad\mbox{$\sup_j\H^{n-1}(\Sigma_j)<\infty$}\,,
\end{eqnarray}
one finds a set $\Om$ with finite perimeter in $(M,g)$ such that, and up to extracting subsequences, it holds
\begin{equation}
  \label{Omegaj to Omega}
  \lim_{j\to\infty}\H^n(\Om_j\Delta\Om)=0\,,\qquad\liminf_{j\to\infty}\H^{n-1}(\pa\Om_j)\ge{\rm Per}(\Om)\,.
\end{equation}
In the above basic setting, Theorem \ref{thm main}, combined with standard closure results for integer varifolds, leads to an affirmative answer to \eqref{CP}. This is the content of our second main result, where the most general case of sequences of sets of finite perimeter is directly addressed:

\begin{theorem}[Compactness]
  \label{thm compactness 1}
  If $n\ge 3$, $(M,g)\in\B_n\cup \B_n^*$, and $\{\Om_j\}_j$ is a sequence of sets of finite perimeter in $M$ such that
  \begin{enumerate}
    \item[(i)] there are $(a,b)\cc(0,\bar{r})$ and a Borel set $\Om\subset M$ s.t. $M^\circ\cap\ov{\pa^*\Om_j}\subset N\times(a,b)$ for every $j$, and $\H^n(\Om_j\Delta\Om)\to 0$ as $j\to\infty$;
    \item[(ii)] ${\rm Per}(\Om_j)\to{\rm Per}(\Om)$ as $j\to\infty$;
    \item[(iii)] there is $\l\in\R$ such that, for every $X\in\X(M)$, as $j\to\infty$,
        \begin{equation}\label{very weak}
        \int_{M^\circ\cap\pa^*\Om_j}\Div^{\pa^*\Om_j}X\,d\H^{n-1}-\l\,\int_{M^\circ\cap\pa^*\Om_j}\langle X,\nu_{\Om_j}\rangle_g\,d\H^{n-1}\to 0\,;
        \end{equation}
  \end{enumerate}
   then there is $t_0\in(0,\bar{r})$ such that $\Om$ if $\H^n$-equivalent either to $N\times(0,t_0)$ (and then $\l>0$) or to $N\times(t_0,\bar{r})$ (and then $\l<0$).
\end{theorem}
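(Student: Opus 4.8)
The plan is to deduce Theorem~\ref{thm compactness 1} from the rigidity result Theorem~\ref{thm main}, by showing that the limit set $\Om$ satisfies the hypotheses of Theorem~\ref{thm main} with the same $\l$ appearing in (iii). Two of these hypotheses are immediate. First, $\sup_j{\rm Per}(\Om_j)<\infty$ by (ii), so that, together with $\H^n(\Om_j\Delta\Om)\to0$ and lower semicontinuity of the perimeter, $\Om$ has finite perimeter and $D\CF{\Om_j}\weakstar D\CF{\Om}$ as vector-valued Radon measures on $M$. Second, (i) gives $\pa^*\Om_j\cap M^\circ\subset N\times(a,b)$ for all $j$, so that $D\CF{\Om_j}$ vanishes on the open set $M^\circ\setminus(N\times[a,b])$; passing to the weak-$*$ limit, $D\CF{\Om}$ vanishes there too, whence $\pa^*\Om\cap M^\circ\subset N\times[a,b]\cc M^\circ$ and $M^\circ\cap\ov{\pa^*\Om}$ is compact in $M$. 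Finally, since $M^\circ\cap\pa^*\Om\subset N\times[a,b]\cc M^\circ$, replacing a general $X\in\X(M)$ by $\chi X$, where $\chi\in C^\infty(M)$ satisfies $\spt\chi\cc M^\circ$ and $\chi\equiv1$ near $N\times[a,b]$, leaves both sides of \eqref{distributional CMC} unchanged, so it suffices to establish \eqref{distributional CMC} for $\Om$ when $\spt X\cc M^\circ$.

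Fix such an $X$ and let $V_j$, $V$ be the multiplicity-one rectifiable varifolds carried by $M^\circ\cap\pa^*\Om_j$ and $M^\circ\cap\pa^*\Om$, so that the first term of \eqref{very weak} is $\delta V_j(X)$ and \eqref{very weak} reads $\delta V_j(X)=\l\int_{M^\circ\cap\pa^*\Om_j}\langle X,\nu_{\Om_j}\rangle_g\,d\H^{n-1}+o(1)$. On the right-hand side, $\spt X\cc M^\circ$ and the divergence theorem give $\int_{M^\circ\cap\pa^*\Om_j}\langle X,\nu_{\Om_j}\rangle_g\,d\H^{n-1}=\int_{\Om_j}\Div X\,d\H^n$, which tends to $\int_{\Om}\Div X\,d\H^n=\int_{M^\circ\cap\pa^*\Om}\langle X,\nu_{\Om}\rangle_g\,d\H^{n-1}$ by (i). For the left-hand side I use (ii): since $\H^n(\Om_j\Delta\Om)\to0$ and ${\rm Per}(\Om_j)\to{\rm Per}(\Om)$, the Gauss--Green measures $D\CF{\Om_j}$ converge to $D\CF{\Om}$ \emph{strictly}, hence, by Reshetnyak's continuity theorem, the measures $\H^{n-1}\llcorner\pa^*\Om_j$ and $(\nu_{\Om_j}\otimes\nu_{\Om_j})\,\H^{n-1}\llcorner\pa^*\Om_j$ converge weakly-$*$ to $\H^{n-1}\llcorner\pa^*\Om$ and $(\nu_{\Om}\otimes\nu_{\Om})\,\H^{n-1}\llcorner\pa^*\Om$; equivalently, $V_j\weakstar V$ as varifolds. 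Because $\Div^{\pa^*\Om_j}X=\Div X-\langle\nabla_{\nu_{\Om_j}}X,\nu_{\Om_j}\rangle_g$ is the value at $(x,\nu_{\Om_j}(x)^\perp)$ of a fixed continuous function, compactly supported in $M^\circ$, on the Grassmann bundle of $(n-1)$-planes over $M$, this varifold convergence yields $\delta V_j(X)\to\delta V(X)=\int_{M^\circ\cap\pa^*\Om}\Div^{\pa^*\Om}X\,d\H^{n-1}$. Combining the three limits gives \eqref{distributional CMC} for $\Om$, and Theorem~\ref{thm main} then gives the asserted dichotomy, sign of $\l$ included.

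The main obstacle will be the identification of the varifold limit, and it is the point where hypothesis (ii) is genuinely needed. Condition (iii) controls the $V_j$ only weakly; in particular it gives no uniform bound on their first variations (a sequence of distributions converging to $0$ need not be bounded in total variation), so soft closure of integral varifolds alone does not pin down $\lim_j V_j$ -- a priori this limit could be a proper subvarifold of $V$, or carry higher multiplicity, which would destroy the first-variation identity. It is precisely the perimeter convergence (ii), through the strict-convergence/Reshetnyak step, that forces $V_j\weakstar V$ and makes the passage to the limit in \eqref{very weak} exact. A minor technical point is that $M$ is a manifold with boundary and $\Om$ may fill a collar of the horizon $N_0=\pa M$; this is harmless, since by (i) all reduced boundaries stay out of $N\times[0,a]$, so the horizon never contributes to the integrals in play and the reduction to $\spt X\cc M^\circ$ is legitimate.
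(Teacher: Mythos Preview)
Your proof is correct and follows essentially the same route as the paper's: verify that $\Om$ satisfies the hypotheses of Theorem~\ref{thm main}, use the strict convergence $D\CF{\Om_j}\to D\CF{\Om}$ (i.e., $L^1$-convergence together with assumption (ii)) to obtain varifold convergence $V_j\weakstar V$, pass to the limit in both terms of \eqref{very weak} via the divergence theorem on the normal term, and apply Theorem~\ref{thm main}. The only difference is presentational: you spell out the varifold convergence through Reshetnyak's continuity theorem, while the paper simply asserts it as a consequence of perimeter convergence; your added commentary on why (ii) is indispensable is accurate and matches the paper's emphasis on this point.
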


\begin{remark}
  {\rm The proof of Theorem \ref{thm compactness 1} requires the full strength of Theorem \ref{thm main} even if one is only interested in sequences $\{\S_j\}_j$ of closed, embedded, orientable {\it smooth} hypersurfaces in $M$ whose mean curvature oscillations are assumed to vanish in every $C^k$-norm.}
\end{remark}

\begin{remark}
  {\rm In \eqref{very weak} the mean curvature oscillation is required to vanish {\it only in distributional sense}. This feature points to the possibility of applying Theorem \ref{thm compactness 1} to minimizing sequences of horizon-homologous isoperimetric problems that have been suitably {\it selected} by means of the Ekeland variational principle; see, e.g., \cite[Theorem 3.2(iv)]{CicaleseLeonardi}. Similarly, Theorem \ref{thm compactness 1} will be easily applied to the study of horizon-homologous isoperimetric sets with fixed volume with respect to metrics $\{g_j\}_j$ on $M$ such that, as $j\to\infty$,  $g_j\to g$ with $(M,g)\in\B_n\cup\B_n^*$. In both these examples, the perimeter convergence assumption (ii) is trivially checked by energy comparison.}
\end{remark}

\subsection{Strategy of proof and organization of the paper}\label{section organization} Section \ref{section positive reach} and Section \ref{section viscous mc} are devoted to establishing in the Riemannian setting the several tools from GMT that lie at the core of our analysis. In Section \ref{section normal bundles} we collect several properties of normal bundles to closed sets in complete Riemannian manifolds (Theorem \ref{thm normal bundles closed sets}), extending from the Euclidean case a series of recent results obtained in \cite{albano,mennesantilli19,santilliAMPA20,kolasinski2021regularity,HugSantilli2022}. In Section \ref{subsection positive reach} we review some theorems of Kleinjohann \cite{Kleinjohann} and Bangert \cite{Bangert} concerning sets of positive reach in complete Riemannian manifolds (Theorem \ref{thm kleinjohann} and Theorem \ref{thm bangert}). In Section \ref{section white} we recall the viscous notion of ``being $m$-dimensional with mean curvature vector bounded by $\lambda$'' introduced on closed subsets of Riemannian manifolds by White in \cite{whiteMC}, and recall its relation to distributional mean curvature (Theorem \ref{thm white}). Finally, in Section \ref{section mario}, we extend to the Riemannian setting a delicate ``Lusin-type property'' of normal bundles of White's $(m,\l)$-sets that, in the Euclidean case, was proved in \cite{SantilliBMS} (Theorem \ref{thm Santilli riem}).

\medskip

At the basis of the distributional version of the Alexandrov theorem proved in \cite{delgadinomaggi} (as well as of the previously cited quantitative versions of it), lies the approach to CMC-rigidity of Ros \cite{ros87ibero} and Montiel-Ros \cite{montielros}. Their method is based on the analysis of equality cases in the (Euclidean) Heintze-Karcher inequality.  Brendle's theorem, in turn, is based on the analysis of the equality cases of {\it two} different Heintze--Karcher-type inequalities for subsets $\Om$ of $(M,g)\in\B_n\cup\B_n^*$ (corresponding to the cases $\pa\Om=\Sigma$ and $\pa\Om=\Sigma\cup N_0$ appearing in \eqref{from Sigma to Omega}), and, specifically, to the fact that, when $\Sigma$ is such an equality case, then, $\Sigma$ is umbilic; the umbilicality of $\Sigma$ is then combined with ${\rm (H4)}$ to deduce rigidity (i.e., $\Sigma$ is a slice).

\medskip

A natural strategy for proving Theorem \ref{thm main} thus consists in: (a) establishing the two Heintze--Karcher-type inequalities of Brendle on sets with finite perimeter; (b) addressing the analysis of their equality cases in the distributional setting; and, (c) deducing rigidity from an established set of necessary conditions for equality.

\medskip

In Section \ref{section HK} we implement this strategy. There we work with {\it not necessarily closed}, smooth, embedded, hypersurfaces $\Sigma$ satisfying three assumptions: first, $\H^{n-1}(\ov\Sigma\setminus\Sigma)=0$ with $\H^{n-1}(\Sigma)<\infty$ ($\ov\Sigma\setminus\Sigma$ is understood as the ``singular set'' of $\Sigma$); second, either $\ov\Sigma=\pa\Om$, or $N_0\cup\ov\Sigma=\pa\Om$, for an open set $\Om$ in $M$; and, finally, $\ov\Sigma$ is compactly contained in $M^\circ$ and has bounded mean curvature in the viscous sense of White, while $\Sigma$ has positive mean curvature with respect to the outer $g$-unit normal $\nu_\Om$ to $\Om$.

\medskip

In Theorem \ref{thm hk inq} we take care of steps (a) and (b). Implementing step (a) is particularly delicate not only because, as expected, several passages of Brendle's argument make a crucial use of smoothness, and thus require considerable effort to be repeated or redesigned in a non-smooth framework;  but also because, in anticipation of the non-smooth rigidity discussion of step (c), we need a more detailed list of necessary conditions for equality cases in Brendle's Heintze--Karcher-type inequalities. In this direction, we notice that we shall establish {\it three} such conditions: (E1) $\Sigma$ is umbilical in $(M,g)$ (which is the condition already pointed out in \cite{brendle}); (E2) $M\setminus\Om$ has positive reach in $(M,g^*)$ (where $g^*$ is a certain complete metric on $M$, conformal to $g$); (E3) under ${\rm (H3)^*}$, $g_N(\nu_\Om,\nu_\Om)=0$ on $\Sigma$.  The identification of condition (E3) is new even in the smooth case. We also notice that, in the smooth case, condition (E2) is automatically true; and, indeed, the derivation of (E2) requires some careful work on Brendle's argument which is specific to the viscous setting.

\medskip

In Theorem \ref{thm rigidity}, we further assume that $\ov\Sigma$ has constant mean curvature in distributional sense, and then address step (c). If ${\rm (H3)^*}$, and thus condition (E3), holds, then we can infer rigidity directly from it, without using umbilicality: indeed (E3) implies the very strong information that $\nu_\Om$ is parallel to $\pa/\pa r$  $\H^{n-1}$-a.e. along $\Sigma$ -- an information that gives $\Sigma=N_{t_0}$ by a simple property of sets of finite perimeter (cf. with \cite[Exercise 15.18]{maggiBOOK}). If, otherwise, only ${\rm (H3)}$ holds, then, as in \cite{brendle}, we need to combine ${\rm (H4)}$ and umbilicality to deduce that, $\H^{n-1}$-a.e. on $\Sigma$, $\nu_\Om$ is either parallel {\it or orthogonal} to $\pa/\pa r$. The dichotomy parallel/orthogonal prevents the use of something as simple as \cite[Exercise 15.18]{maggiBOOK}. In the smooth case, one immediately excludes ``orthogonality'', and thus conclude rigidity, by a sliding argument. However, in the non-smooth setting, sliding arguments are not equally effective (because of multiplicity issues preventing the use of Allard's regularity theorem at contact points). This is the passage where condition (E2) reveals useful, and ultimately allows us to conclude the proof of Theorem \ref{thm rigidity}.

\medskip

Finally, Section \ref{section proofs} contains the proofs of Theorem \ref{thm main} and Theorem \ref{thm compactness 1}.

\subsection{Rigidity and bubbling in the hyperbolic space} 
It is a classical and well known result that Alexandrov rigidity result can be generalized to \emph{smooth} boundaries in the hyperbolic space. In \cite[Theorem 1.4]{brendle} a new proof is obtained based on the Heintze-Karcher inequality. It is a natural and interesting question to understand if finite unions of possibly mutually tangent balls are the only examples of sets of finite perimeter in the hyperbolic space with constant distributional mean curvature. The methods developed in this paper provides a positive answer to this question. 

\begin{theorem}\label{thm main hyperbolic}
	Suppose $ \Omega $ is a set of finite perimeter with compact closure in the hyperbolic space $ \mathbb{H}^n $ and $ \lambda \in \mathbb{R} $ such that 
	\begin{equation}\label{thm main hyperbolic eq}
		\int_{\partial^\ast \Omega} {\rm div}^{\partial^\ast \Omega}X\, d\mathcal{H}^{n-1} = \lambda \int_{\partial^\ast \Omega} \langle X, \nu_\Omega \rangle_{\mathbb{H}^n}\, d\mathcal{H}^{n-1},
	\end{equation}  
	for every $ X \in \X(M) $. Then $ \Omega $ is a finite union of disjoint (possibly mutually tangent) open geodesic balls with equal radii.
\end{theorem}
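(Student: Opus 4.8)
The plan is to run, inside $\mathbb{H}^n$, the three-step scheme behind Theorem~\ref{thm main}: establish the relevant Heintze--Karcher inequality on sets of finite perimeter, analyze its equality cases in the distributional setting, and pass from those to the geometric conclusion; the new feature is that $\mathbb{H}^n$ is the \emph{degenerate, borderline} member of the warped product family, which is exactly why rigidity gets replaced by bubbling. I would begin with preliminary reductions. By Allard's regularity theorem, \eqref{thm main hyperbolic eq} forces $\Sigma:=\pa^\ast\Om$ to be a smooth embedded hypersurface with $\H^{n-1}(\ov\Sigma\setminus\Sigma)=0$, $\H^{n-1}(\Sigma)<\infty$, and constant mean curvature $\l$ with respect to $\nu_\Om$; passing to the density-one representative of $\Om$, one has $\pa\Om=\ov\Sigma$, and $\ov\Sigma$ is compact since $\ov\Om$ is. By Theorem~\ref{thm white}, $\ov\Sigma$ has bounded mean curvature in White's viscous sense; comparing $\ov\Om$ with the smallest enclosing geodesic sphere --- an argument that uses only the weak maximum principle, through the viscous bound --- yields $\l>n-1>0$. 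In particular $\Sigma$ has positive mean curvature with respect to the outer normal $\nu_\Om$, and, $\Om$ being bounded, no ``horizon'' alternative will occur.

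Next I would set up the Heintze--Karcher inequality. Fix $p\in\mathbb{H}^n\setminus\ov\Om$ and let $V=\cosh(d(\cdot,p))$, so that $V\ge 1$, $V$ is smooth on $\mathbb{H}^n$, and $\nabla^2 V=V\,g_{\mathbb{H}^n}$. In geodesic polar coordinates around $p$, the manifold $\mathbb{H}^n\setminus\{p\}$ has the warped product form \eqref{warped product} over the round $(\mathbb{S}^{n-1},g_{\mathbb{S}^{n-1}})$ with $h(r)=\sinh r$ and $\bar r=\infty$: it satisfies ${\rm (H0)}$ (with $\rho=1$) and ${\rm (H2)}$, while $2(h''/h)+(n-2)[((h')^2-1)/h^2]$ and $(h''/h)+[(1-(h')^2)/h^2]$ are identically equal to $n$ and to $0$, so ${\rm (H3)}$ holds \emph{with equality} and both ${\rm (H1)}$ and ${\rm (H4)}$ \emph{fail}. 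Thus $\mathbb{H}^n\notin\B_n\cup\B_n^*$, but the portions of Section~\ref{section HK} producing the Heintze--Karcher inequality and its necessary equality conditions apply to $\Om$ without change: ${\rm (H1)}$ serves only to make sense of the alternative $\pa\Om=\Sigma\cup N_0$, vacuous here since $p\notin\ov\Om$ keeps $\ov\Sigma$ compactly inside $\mathbb{S}^{n-1}\times(0,\infty)$ with $\pa\Om=\ov\Sigma$; and ${\rm (H4)}$ --- together with the strict variant ${\rm (H3)}^*$, which likewise fails, so that condition (E3) is unavailable --- enters only in the rigidity step Theorem~\ref{thm rigidity}, replaced below. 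Hence the arguments of Theorem~\ref{thm hk inq} apply verbatim with this $V$, and testing \eqref{thm main hyperbolic eq} with $X_0=\nabla V$ --- for which ${\rm div}^{\pa^\ast\Om}X_0=(n-1)V$ --- and using the divergence theorem yields the Minkowski-type identity that forces equality in the Heintze--Karcher inequality. As in Theorem~\ref{thm hk inq} one then obtains the necessary conditions \textbf{(E1)} $\Sigma$ is umbilical in $\mathbb{H}^n$, and \textbf{(E2)} $\mathbb{H}^n\setminus\Om$ has positive reach (with respect to a complete conformal representative of the hyperbolic metric).

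I would then convert (E1)--(E2) into the bubbling statement. By (E1) and the constancy of $\l$, every connected component $\Sigma'$ of $\Sigma$ is a connected totally umbilical hypersurface of constant principal curvature $\kappa=\l/(n-1)>1$; by the classification of the totally umbilical hypersurfaces of $\mathbb{H}^n$ according to $\kappa$ (geodesic spheres for $\kappa>1$, horospheres for $\kappa=1$, equidistant hypersurfaces for $0<\kappa<1$, totally geodesic ones for $\kappa=0$), $\Sigma'$ is an open subset of a unique geodesic sphere $S$ of the radius $r_0$ with $(n-1)\coth r_0=\l$. Then (E2) upgrades this to $\Sigma'=S$: if $\Sigma'$ were a proper relatively open subset of $S$, its relative boundary would be a non-empty subset of the singular set $\ov\Sigma\setminus\Sigma$ along which $\pa\Om$ has a genuine edge (the hypersurface $\Sigma'$ terminating), and then, by the normal bundle and positive reach theory of Sections~\ref{section normal bundles} and~\ref{subsection positive reach}, $\mathbb{H}^n\setminus\Om$ would fail to have unique nearest points at those edge points --- contradicting (E2). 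So each component of $\Sigma$ is a full geodesic sphere of radius $r_0$; since $\H^{n-1}(\Sigma)<\infty$ and all such spheres have equal area, there are only finitely many, say $S_1,\dots,S_k$. Because $\Sigma$ has positive mean curvature with respect to $\nu_\Om$, near each $S_i$ the set $\Om$ lies on the side enclosed by $S_i$, i.e.\ contains the geodesic ball $B_i$ bounded by $S_i$; since $\pa\Om=\bigcup_i S_i$ and $\Om$ is bounded, it follows that $\Om=\bigcup_{i=1}^k B_i$. Finally, if two of these balls had overlapping interiors, a portion of some $S_i=\pa B_i$ would lie in the interior of $\Om$, contradicting $S_i\subset\pa\Om$; hence the $B_i$ have pairwise disjoint interiors, and two geodesic balls of the common radius $r_0$ with disjoint interiors have either disjoint closures or closures meeting at a single tangency point. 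This is the asserted description of $\Om$.

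The main difficulty is this last step. The failure of ${\rm (H4)}$ means that equality in the Heintze--Karcher inequality no longer pins a component of $\Sigma$ down to a \emph{fixed} slice --- in $\mathbb{H}^n$ every geodesic sphere is a ``slice'' for a suitable centre --- so rigidity as in Theorem~\ref{thm rigidity} is unavailable, and one must instead extract from the metric condition (E2) both that proper spherical pieces cannot occur and that the full spheres produced form a mutually disjoint or tangent family; this is exactly what the synthetic, positive-reach-based framework of Sections~\ref{section normal bundles}--\ref{subsection positive reach} is built for. A secondary technical point is to confirm that the Heintze--Karcher arguments of Section~\ref{section HK} indeed survive the degenerations of ${\rm (H1)}$ and ${\rm (H4)}$ that are intrinsic to the hyperbolic metric.
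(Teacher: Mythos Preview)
Your setup coincides with the paper's: choose $p\notin\overline\Omega$, set $f=\cosh d(\cdot,p)$, run the Heintze--Karcher machinery (the paper packages this as Theorem~\ref{thm hk in substatic spaces}, restoring completeness of the conformal metric via a Pigola--Veronelli extension rather than leaving it implicit), and deduce from the equality case that $\Sigma$ is umbilical and $M\setminus\Omega$ has positive reach in the conformal metric.

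The gap is in your rigidity step. You assert that if a component $\Sigma'$ of $\Sigma$ were a \emph{proper} open subset of its ambient sphere $S$, then along $\partial_S\Sigma'$ the set $\partial\Omega$ would have an ``edge'' at which positive reach of $\mathbb{H}^n\setminus\Omega$ fails. This implication is false exactly in the configurations you are trying to characterize: when $\Omega$ is the union of two tangent geodesic balls, each component of $\Sigma=\partial^*\Omega$ is a \emph{punctured} sphere (the tangency point has density two, hence is excluded from $\Sigma$ by Allard), yet $\mathbb{H}^n\setminus\Omega$ does have positive reach. So (E2) does not force components of $\Sigma$ to be full spheres, and nothing in your argument excludes, say, a hemispherical component. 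The paper sidesteps this by working instead with the level sets $\Sigma_t=\{\dist_{g^*}(\cdot,\overline\Sigma)=t\}$ for small $t>0$: positive reach of $M\setminus\Omega$ (via Theorem~\ref{thm kleinjohann}) makes each $\Sigma_t$ a \emph{closed} compact $C^{1,1}$-hypersurface, the equality analysis makes it umbilical $\H^{n-1}$-a.e., conformal invariance transports umbilicity to the Euclidean metric, and then \cite[Lemma~3.2]{deRosaKolaSantilli} forces $\Sigma_t$ to be a finite disjoint union of Euclidean spheres; letting $t\to 0$ gives the conclusion. Closedness of $\Sigma_t$ --- obtained for free from the positive reach structure --- replaces your unjustified promotion of open spherical pieces to full spheres.
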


\begin{remark}\label{rmk: tame varifolds}
	In \cite{haslhoferOrWhite} the authors develop a beautiful moving plane method for a class of varifolds satisfying a suitable \emph{tameness condition} and they employ it to prove several rigidity results for stationary and CMC varifolds. In particular, cf.\ \cite[Theorem 1.8]{haslhoferOrWhite}, they prove that if $ \Sigma \subseteq \mathbb{H}^{n} $ is the support of a $ (n-1) $-dimensional \emph{tame} varifold without boundary and with constant mean curvature\footnote{cf.\ \cite[eq.\ (8)]{haslhoferOrWhite} for the definition of varifold with bounded mean curvature, and \cite[Definition 1.6]{haslhoferOrWhite} for the definition of tameness for varifolds}, and if $ \Sigma $ is connected and compact, then it is a geodesic sphere. In this direction we point out that unions of mutually tangent geodesic spheres cannot be tame varifolds in the sense of \cite{haslhoferOrWhite} (since the tangent cone at the singular point between the spheres is a multiplicity two plane). On the other hand, these configurations naturally arise as limits of sequences of connected and compact smooth hypersurfaces with mean curvatures converging to a constant  (see \cite{ciraolomaggi2017} and references therein). Hence, Theorem \ref{thm main hyperbolic}, in addressing rigidity and compactness in the hyperbolic space under assumptions that do not prevent bubbling, provides a useful improvement of \cite[Theorem 1.8]{haslhoferOrWhite}.
	\end{remark}

\subsection{Further directions} Heintze--Karcher type inequalities for smooth strictly mean convex hypersurfaces are proved in general substatic manifolds with horizon boundary in \cite{LiXiaJDG,fogagnoloPinamonti}. These results, whose proof relies on an elliptic method rather than on geodesic flows, generalize those proved in \cite{brendle}. More recently, the analysis of the equality case for these more general inequalities  in \cite{BorghiniFogagnoloPinamonti} leads to the extension of the rigidity statement of constant mean curvature hypersurfaces in \emph{all} substatic warped products, namely those satisfying (H0)-(H3), hence completely dropping the hypothesis (H4). In view of this recent development, it is natural to ask if our rigidity result, Theorem \ref{thm main}, can be also proved in this more general class of warped product manifolds. Attacking  this problem  by employing an approach similar to that of \cite{BorghiniFogagnoloPinamonti}, requires an elliptic method to  be developed in the singular setting of Theorem \ref{thm main}.

\medskip

Another natural question is that of obtaining quantitative estimates for almost-CMC boundaries, both in the Brendle class and on space forms. In this direction we mention the recent results on space forms \cite{ciraoloVezzoniIND,ciraoloRoncoroniVezzoni}, where, based on the moving planes method, sharp decays are obtained under a ``bubbling-preventing'' exterior/interior ball assumption. Non-sharp quantitative estimate have been derived in \cite[Theorem 1.3]{scheuer} on space forms, and in \cite[Theorem 1.4]{scheuerXia22} on a sub-class of Brendle's class (which still includes the model manifolds from General Relativity). Both these results require bounds on the $C^{2,\beta}$-geometry of the considered boundaries (in addition to interior ball conditions), and the resulting stability constants (together with the non-sharp stability exponents in the case of \cite[Theorem 1.4]{scheuerXia22}) depend to the particular $\beta$ under consideration. For this reason the compactness result in Theorem \ref{thm compactness 1} is entirely new even on smooth boundaries, as it does not require any uniform control on their geometry.

\medskip

\noindent {\bf Acknowledgements:} FM wishes to thank Claudio Arezzo for having introduced him to the framework considered in this work. FM is supported by NSF-DMS RTG 1840314, NSF-DMS FRG 1854344, NSF-DMS 2000034, and NSF-DMS 2247544.  MS acknowledges support of the INDAM-GNSAGA project "Analisi Geometrica: Equazioni alle Derivate Parziali e Teoria delle Sottovarietà" and PRIN project no.\ 20225J97H5.

\section{Sets of positive reach in Riemannian manifolds}\label{section positive reach} In this section $(M,g)$ is a {\it complete} Riemannian manifold of dimension $n$, with exponential function $\exp$ and Riemannian distance $d$, and we denote by $\Psi:TM\times[0,\infty)\to M$ the map
\begin{equation}
  \label{the map Psi}
  \Psi(p,\eta,t)=\exp(p,t\,\eta)\,,\qquad \forall (p,\eta)\in TM\,,t\ge0\,.
\end{equation}
We denote by $\der f$ the differential of $f:M\to\R$, and by $\nabla f$ and $\Der^2 f$ the gradient and Hessian of $f$ with respect to $g$. We use $\nabla$ also to denote the metric connection of $(M,g)$. A segment in $M$ is a unit speed geodesic $\g:[a,b]\to M$ such that $d(\g(a),\g(b))=b-a$.

\subsection{Normal bundles of closed sets}\label{section normal bundles} Given a closed set $\Gamma\subset M$, the projection map on $\Gamma$ is defined for at $p\in M$ as the subset of $\Gamma$ given by
\[
\xi_\Gamma(p)=\big\{a\in\Gamma:\dist(p,\Gamma)=d(p,a)\big\}\,.
\]
The {\bf unit normal bundle} $\mathcal{N}^1\Gamma$ and the {\bf normal bundle} $\mathcal{N}\Gamma$ of $\Gamma$ are defined by setting, for $a\in\Gamma$,
\begin{eqnarray*}
\mathcal{N}^1_a\Gamma&=&\big\{\eta\in T_aM:|\eta|=1\,,\mbox{$\exists s>0$ s.t. $s=\dist\big(\exp(a,s\,\eta),\Gamma\big)$}\big\}\,,
\\
\mathcal{N}_a\Gamma&=&\big\{t\,\eta:t\ge0\,,\eta\in\mathcal{N}^1_a\Gamma\big\}\,.
\end{eqnarray*}
We define $\rho_\Gamma:\mathcal{N}^1\Gamma\to(0,\infty]$, $A_\Gamma\subset TM$, and ${\rm Cut}(\Gamma)\subset M$ by setting
\begin{eqnarray}
\label{def rho sigma}
&&\rho_\Gamma(x,\eta)=\sup\big\{s>0:s=\dist(\exp(x,s\,\eta),\Gamma)\big\}\,,
\\
\label{def A Gamma}
&& A_\Gamma=\big\{(p,\eta,t):(p,\eta)\in\mathcal{N}^1\Gamma\,,t \in (0, \rho_\Gamma(p, \eta))\big\}\,,
\\
\label{def cut sigma}
&&{\rm Cut}(\Gamma)=\big\{\exp(x,s\,\eta):(x,\eta)\in\mathcal{N}^1\Gamma\,,s=\rho_\Gamma(x,\eta)<\infty\big\}\,,
\end{eqnarray}
so that, when $\Gamma$ is a closed $C^2$-hypersurface in $M$, $\rho_\Gamma$ is continuous on $\Gamma$, ${\rm Cut}(\Gamma)$ corresponds to the usual notion of {\bf cut-locus} of $\Gamma$ and satisfies $\H^n({\rm Cut}(\Gamma))=0$, and $\Psi|_{A_\Gamma}$ is a diffeomorphism between $A_\Gamma$ and
\[
\mathcal{U}(\Gamma)=\Psi(A_\Gamma)=\big\{\exp(a,s\,\eta):(a,\eta)\in\mathcal{N}^1\Gamma\,,s\in\big(0,\rho_\Gamma(a,\eta)\big)\big\}\,.
\]
The following theorem relies on a series of recent results \cite{albano,mennesantilli19,santilliAMPA20,kolasinski2021regularity,HugSantilli2022} to adapt/extend the above classical facts to the case when $\Gamma$ is merely a closed set.

\begin{theorem}\label{thm normal bundles closed sets}
	If $(M^n,g)$ is a complete Riemannian manifold and $\Gamma\subset M$ is closed, then:
	\begin{enumerate}
		\item[(i)] $\mathcal{N}\,\Gamma$ is a countably $ n $-rectifiable Borel subset of $  TM $;
		\item[(ii)]  $\mathcal{N}_a\Gamma$ is a convex cone in $ T_a M$ for every $ a \in \Gamma $;
		\item[(iii)] for each $ m = 0, \ldots, n-1 $ the set
		\begin{equation*}
			\Gamma^{(m)}: =  \{ a \in \Gamma : \dim \mathcal{N}_a \Gamma = n-m  \}
		\end{equation*}
		is countably $(\H^m,m)$-rectifiable;
        in particular\footnote{Indeed, if $\H^0(\mathcal{N}^1_a\Gamma) > 2$, then the convexity of  $\mathcal{N}_a\Gamma$ implies that $\dim \mathcal{N}_a \Gamma \ge 2$, i.e. $a\in\Gamma^{(m)}$ for some $m\le n-2$.}
		\begin{equation}
        \label{utile}
                \H^{n-1}(\{a \in \Gamma: \H^0(\mathcal{N}^1_a\Gamma) > 2\}) =0\,;
        \end{equation}
		\item[(iv)] $ \mathcal{N}^1 \Gamma $ is a countably $ (n-1) $-rectifiable Borel subset of $ T M $ and $ \rho_\Gamma : \mathcal{N}^1 (\Gamma) \rightarrow (0, +\infty]$ is an upper-semicontinuous function;
		\item[(v)] $\H^n({\rm Cut} (\Gamma)) =0 $.
		\item[(vi)] For each $ p \in \mathcal{U}(\Gamma) $ there exists a unique $ (a, \eta)\in TM $, $ |\eta|=1 $,  such that $ d(p,a) = \dist(p, \Gamma) $ and $ \exp(a, d(p,a)\eta) = p $;
		\item[(vii)] If $ \tau_0 > 0 $ and $ \rho_\Gamma(a, \eta) \geq \tau_0 $ for
        $ \H^{n-1} $ a.e.\ $(a, \eta)\in \mathcal{N}^1\Gamma $, then
		$$
        \big\{x \in M : 0 < \dist(x, \Gamma) < \tau_0\big\} \subseteq \mathcal{U}(\Gamma)\,.
        $$
	\end{enumerate}
\end{theorem}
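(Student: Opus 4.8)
\medskip
\noindent\emph{Proof strategy.} All seven assertions are ``closed set'' versions of classical facts about tubular neighbourhoods, normal bundles and cut loci: the plan is to obtain (i)--(vi) by combining the Euclidean structural theory of normal bundles and stratifications of closed sets developed in \cite{albano,mennesantilli19,santilliAMPA20,kolasinski2021regularity,HugSantilli2022} with a localisation in geodesic normal coordinates, and to deduce (vii) --- the statement actually used in the rest of the paper --- from (i), (iv), (vi) by a short measure-theoretic argument. Localisation works because, for each $a\in\Gamma$, the cone $\mathcal N_a\Gamma\subset T_aM$ coincides with the \emph{Euclidean} proximal normal cone of $\exp_a^{-1}(\Gamma)$ at the origin of $T_aM$: indeed ``$s=\dist(\exp(a,s\eta),\Gamma)$ for some $s>0$'' is equivalent, via the second-order Taylor expansion of the Riemannian distance in normal coordinates, to $\limsup_{w\to0,\,w\in\exp_a^{-1}(\Gamma)}\langle\eta,w\rangle/|w|^2<\infty$. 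Since the Euclidean structural results (countable rectifiability of the unit normal bundle, the rectifiable stratification by the dimension of the proximal cone) are invariant under $C^2$-diffeomorphisms, covering $\Gamma$ by countably many normal charts turns $\mathcal N^1\Gamma$, $\mathcal N\Gamma$ and the strata $\Gamma^{(m)}$ into their Euclidean counterparts up to such changes of variable, preserving countable rectifiability, $\dim\mathcal N_a\Gamma$, and $\mathcal H^k$-negligibility.

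Granting this, the individual items run as follows. \emph{(i):} $\mathcal N^1\Gamma$ is $F_\sigma$, hence Borel, in $TM$: by the shortening property (if $s=\dist(\exp(a,s\eta),\Gamma)$ then $s'=\dist(\exp(a,s'\eta),\Gamma)$ for all $s'\in(0,s]$) one has $(a,\eta)\in\mathcal N^1\Gamma$ iff $s=\dist(\exp(a,s\eta),\Gamma)$ for some rational $s>0$, and each set $\{(a,\eta)\in TM:a\in\Gamma,\ |\eta|=1,\ \dist(\exp(a,s\eta),\Gamma)=s\}$ is closed; then $\mathcal N\Gamma$ is Borel as the preimage of $\mathcal N^1\Gamma$ under $(a,w)\mapsto(a,w/|w|)$ on $\{w\neq0\}$ together with the zero section. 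Countable $n$-rectifiability of $\mathcal N\Gamma$ reduces to countable $(n-1)$-rectifiability of $\mathcal N^1\Gamma$ (then $\mathcal N\Gamma$ is a locally Lipschitz image of $\mathcal N^1\Gamma\times[0,\infty)$), and the latter is the transferred Euclidean theorem. \emph{(ii):} $\mathcal N_a\Gamma$ is a Euclidean proximal normal cone, which is convex by the ``harmonic mean of radii'' computation: if $v_i=t_i\eta_i\in\mathcal N_a\Gamma$ with witnessing radii $s_i$ then $\langle c-a,\eta_i\rangle\le|c-a|^2/(2s_i)$ for $c\in\Gamma$, whence $\langle c-a,v_1+v_2\rangle\le\tfrac12|c-a|^2(t_1/s_1+t_2/s_2)$, and $r=|v_1+v_2|/(t_1/s_1+t_2/s_2)>0$ exhibits $(v_1+v_2)/|v_1+v_2|$ as a unit proximal normal. \emph{(iii):} the stratification is the transferred stratification theorem; \eqref{utile} then follows at once, as in the footnote, from (ii) ($\mathcal H^0(\mathcal N^1_a\Gamma)>2$ gives three distinct unit normals not all on one line through $0$, so $\dim\mathcal N_a\Gamma\ge2$) and (iii) (so $a$ lies in a $(\le n-2)$-rectifiable set, which is $\mathcal H^{n-1}$-null). \emph{(iv):} if $(a_k,\eta_k)\to(a,\eta)$ in $\mathcal N^1\Gamma$ with $\rho_\Gamma(a_k,\eta_k)\ge t$, then $\dist(\exp(a_k,s\eta_k),\Gamma)=s$ for every $s<t$, and passing to the limit gives $\rho_\Gamma(a,\eta)\ge s$ for all $s<t$, i.e.\ $\rho_\Gamma(a,\eta)\ge t$; this is upper semicontinuity.

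\emph{(v):} $\dist(\cdot,\Gamma)$ is locally semiconcave on $M\setminus\Gamma$ (standard, via squared distances in normal coordinates), hence twice differentiable $\mathcal H^n$-a.e.\ there by Alexandrov's theorem; at such a point $q$ the nearest point in $\Gamma$ is unique (otherwise $\dist(\cdot,\Gamma)$ has a corner at $q$) and $q$ is not a focal point of $\Gamma$ (otherwise the Hessian is unbounded below), so $q\in\mathcal U(\Gamma)$; since $M\setminus\Gamma=\mathcal U(\Gamma)\sqcup{\rm Cut}(\Gamma)$, this gives $\mathcal H^n({\rm Cut}(\Gamma))=0$. \emph{(vi):} given $p=\exp(a,t\eta)$ with $(a,\eta)\in\mathcal N^1\Gamma$ and $0<t<\rho_\Gamma(a,\eta)$, one has $d(p,a)=t=\dist(p,\Gamma)$; if $(a',\eta')$ also satisfies $|\eta'|=1$, $d(p,a')=\dist(p,\Gamma)=t$ and $\exp(a',t\eta')=p$, then, picking $t'\in(t,\rho_\Gamma(a,\eta))$ and $q=\exp(a,t'\eta)$ (so $\dist(q,\Gamma)=t'$), the concatenation of the geodesics $q\to p$ and $p\to a'$ is a curve from $q$ to $\Gamma$ of length $t'=\dist(q,\Gamma)$, hence a minimizing geodesic, hence smooth at $p$, hence equal near $p$ --- and therefore everywhere --- to $r\mapsto\exp(a,r\eta)$; reading off endpoint and initial direction gives $a'=a$, $\eta'=\eta$.

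\emph{(vii), the crux.} Let $x$ satisfy $0<\dist(x,\Gamma)=:t_0<\tau_0$, and choose a nearest point $a$ and a unit $\eta$ with $x=\exp(a,t_0\eta)$; then $\rho_\Gamma(a,\eta)\ge t_0$, and if the inequality is strict then $(a,\eta,t_0)\in A_\Gamma$, so $x\in\mathcal U(\Gamma)$ and we are done. So suppose $\rho_\Gamma(a,\eta)=t_0<\tau_0$: then $(a,\eta)\in V:=\{(b,\zeta)\in\mathcal N^1\Gamma:\rho_\Gamma(b,\zeta)<\tau_0\}$, which is relatively open in $\mathcal N^1\Gamma$ by (iv) and satisfies $\mathcal H^{n-1}(V)=0$ by hypothesis. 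Writing $(a_q,\eta_q)$ for the unique pair attached to $q\in\mathcal U(\Gamma)$ by (vi), set
\[
W:=\{q\in\mathcal U(\Gamma):(a_q,\eta_q)\in V\}=\Psi\big(\{(b,\zeta,s):(b,\zeta)\in V,\ 0<s<\rho_\Gamma(b,\zeta)\}\big)\subseteq\mathcal U(\Gamma)\,.
\]
By the standard openness of $\mathcal U(\Gamma)$ and continuity on it of $q\mapsto(a_q,\eta_q)$ (both of which also follow from (iv) and (vi)), $W$ is open in $M$; moreover $W\ni\exp(a,s\eta)$ for $0<s<t_0$, so $W\neq\emptyset$ and $\mathcal H^n(W)>0$. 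On the other hand $\{(b,\zeta,s):(b,\zeta)\in V,\ 0<s<\rho_\Gamma(b,\zeta)\}\subseteq V\times(0,\infty)$, which is $\mathcal H^n$-negligible since $\mathcal H^{n-1}(V)=0$; as $\Psi$ is locally Lipschitz, $\mathcal H^n(W)=0$ --- a contradiction. Hence $\rho_\Gamma(a,\eta)>t_0$ and $x\in\mathcal U(\Gamma)$, proving (vii). I expect the genuine obstacle to be not (vii) but the transfer of the Euclidean structural machinery behind (i) and (iii) to the Riemannian setting, i.e.\ verifying in detail that localisation in geodesic normal coordinates preserves precisely the rectifiability, the dimension of the proximal cone, and the negligibility statements required; once these are in hand, the measurability in (i), items (ii), (iv), (v), (vi), and the payoff (vii) follow by the comparatively soft arguments above.
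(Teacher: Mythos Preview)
Your treatment of (i)--(vi) is broadly sound and close in spirit to the paper's: both reduce to the Euclidean structural theory via local charts, though the paper uses an arbitrary chart $\phi$ together with the metric correction $S_a$ (so that $\Phi(a,v)=(\phi(a),S_a[\der\phi(a)(v)])$ carries $\mathcal N\Gamma$ onto the Euclidean normal bundle of $\overline{\phi(\Gamma\cap U)}$), while you use geodesic normal coordinates and the proximal normal cone. For (v) the paper simply cites \cite{albano}, whereas you outline a semiconcavity/Alexandrov argument; both are acceptable.

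For (vii), however, your route diverges from the paper's and contains a genuine gap. Your contradiction hinges on the set $W=\{q\in\mathcal U(\Gamma):(a_q,\eta_q)\in V\}$ being \emph{open in $M$}, so that $W\ne\varnothing$ forces $\H^n(W)>0$. You assert that ``the standard openness of $\mathcal U(\Gamma)$'' follows from (iv) and (vi), but you do not show this, and (iv) points the wrong way: upper-semicontinuity of $\rho_\Gamma$ only yields $\rho_\Gamma(a,\eta)\ge\limsup\rho_\Gamma(a_k,\eta_k)$, whereas to exclude cut points accumulating at $p\in\mathcal U(\Gamma)$ you would need a \emph{lower} bound on $\rho_\Gamma$ along nearby footpoints. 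Continuity of $q\mapsto(a_q,\eta_q)$ on $\mathcal U(\Gamma)$ does follow from (vi), but that only gives $W$ relatively open in $\mathcal U(\Gamma)$; without knowing $\mathcal U(\Gamma)$ itself is open in $M$, you cannot conclude $\H^n(W)>0$, and the single geodesic arc $\{\exp(a,s\eta):0<s<t_0\}\subset W$ has $\H^n$-measure zero.

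The paper's argument for (vii) sidesteps this issue entirely: it uses (v) to see that $\Psi(Q^*)=\{0<\dist(\cdot,\Gamma)<\tau_0\}\setminus{\rm Cut}(\Gamma)$ has full measure, hence (after discarding the $\H^n$-null piece over $\{\rho_\Gamma<\tau_0\}$) the set $\Psi(Q)$ is dense in $\{0<\dist(\cdot,\Gamma)<\tau_0\}$; then for any $x$ in this slab one approximates by $\Psi(a_i,\eta_i,t_i)$ with $\rho_\Gamma(a_i,\eta_i)\ge\tau_0$, extracts a limit $(a,\eta)$ by compactness, and applies (iv) to get $\rho_\Gamma(a,\eta)\ge\tau_0$, hence $x\in\mathcal U(\Gamma)$. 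This density-plus-compactness argument uses (v) essentially and never needs $\mathcal U(\Gamma)$ to be open. If you wish to keep your contradiction scheme, you must either supply an independent proof that $\mathcal U(\Gamma)$ is open for arbitrary closed $\Gamma$ (this is not a consequence of (iv) and (vi) alone), or replace the openness step by the paper's density argument.
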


\begin{proof} Let $\phi : U\subset M \rightarrow V\subset\R^n$ be a local chart of $M$. For each $a\in U$ there is a unique symmetric bijective linear map $S_a:\R^n\to\R^n$ such that
\[
\langle u,v\rangle_g=\der\phi(a)(v)\,\cdot\,S_a\big[\der\phi(a)(u)\big]\,,\qquad\forall u,v\in T_aM\,.
\]
A routine argument shows that the map $\Phi:TU\to V\times\R^n$ defined by $\Phi(a,v)=(\phi(a),S_a[\der\phi(a)(v)])$, $(a,v)\in TU$, is a diffeomorphism, with
\[
\Phi\big(\mathcal{N}\,\Gamma\,\llcorner U\big)=\big(\mathcal{N}\,\ov{\phi(\Gamma\cap U)}\big)\,\llcorner V\,.
\]
Similarly, for every $a\in \Gamma$ and $m=0,...,n-1$, we have
\[
[S_a\circ\der\phi(a)]\big(\NN_a\Gamma\big)=\NN_{\phi(a)}\,\ov{\phi(\Gamma\cap U)}\,,\quad \phi(\Gamma^{(m)}\cap U)=[\ov{\phi(\Gamma\cap U)}]^{(m)}\cap V\,.
\]
Hence, conclusions (i), (ii), and (iii) follow from the analogous statements in the Euclidean case proved in \cite[Remark 4.3]{santilliAMPA20} and \cite{mennesantilli19}. The upper-semicontinuity of $ \rho_\Gamma $ claimed in (iv) follows by an obvious adaptation of the argument for the Euclidean space proved in \cite[Lemma 2.35]{kolasinski2021regularity},  while we can use \cite[3.2.31]{FedererBOOK} in combination with (i) to infer that $ \mathcal{N}^1(\Gamma) $ is countably $(n-1)$-rectifiable. Conclusion (v) is proved in \cite[Theorem 1]{albano}. Conclusion (vi) follows from the remark that if $ s > 0 $ and $ \alpha $ is a unit-speed geodesic such that $ \alpha(0) \in \Gamma $ and $ \dist(\alpha(s), \Gamma) = s $, then, for every $ 0 \leq t < s $, we have $\dist(\alpha(t), \Gamma) = t$, $\xi_\Gamma(\alpha(t)) = \{\alpha(0)\}$, and $ \alpha|_{[0,t]}$ is the unique segment joining $ \alpha(0) $ and $ \alpha(t) $. We are left to prove conclusion (vii), which requires modifications to the proof of \cite[Lemma 3.19]{HugSantilli2022}.  With $\Psi$  as in \eqref{the map Psi}, we define $$ Q^\ast = \{ (a,\eta,t) : (a, \eta) \in \mathcal{N}^1\Gamma, \; 0 < t < \inf\{\tau_0, \rho_\Gamma(a,\eta) \} \} $$ and we notice that $ \Psi (Q^\ast) =  \{x \in M : 0 < \dist(x,\Gamma)< \tau_0\} \setminus {\rm Cut}(\Gamma) $. Let $$ Q = Q^\ast \cap \{(a,\eta,t) : \rho_\Gamma(a,\eta) \geq \tau_0 \} $$ so that $ \H^n(Q^\ast \setminus Q) =0 $ by assumption. Hence $ \H^n(\Psi(Q^\ast) \setminus \Psi(Q)) =0 $, and, by (v), we conclude that $ \Psi(Q) $ is dense in  $ \{x \in M : 0 < \dist(x,\Gamma) < \tau_0\} $. Let $ x \in M $ and $ t = \dist(x, \Gamma) $ with $ 0 < t < \tau_0 $. There exists a sequence $(a_i, \eta_i, t_i) \in Q $ such that $ \Psi(a_i, \eta_i, t_i) \to x $. Since
$ d(a_i,x) \leq d(a_i, \Psi(a_i, \eta_i, t_i)) + d(\Psi(a_i, \eta_i, t_i),x) $ for $ i \geq 1 $ and $ t_i = \dist(\Psi(a_i, \eta_i, t_i), \Gamma) \to t $, we infer that $ \limsup_{i \to \infty} d(a_i,x) \leq t $. By compactness, there are  $(a, \eta) \in TM $ with $|\eta|=1 $ and a subsequence $(a_{i_j}, \eta_{i_j})$ such that $(a_{i_j}, \eta_{i_j}) \to (a, \eta) $ as $ j \to \infty $. It follows that
$$ t = \lim_{j \to \infty} \dist(\Psi(a_{i_j}, \eta_{i_j}, t_{i_j}), \Gamma) = \dist(\Psi(a,\eta,t), \Gamma) $$
and $(a, \eta)\in \mathcal{N}^1\Gamma $. By conclusion (iv) and by $\rho_{\Gamma}(a_i, \eta_i) \geq \tau_0$ we find $ \rho_\Gamma(a, \eta) \geq\tau_0$, and thus $x \in \mathcal{U}(\Gamma) $.
\end{proof}

\subsection{Sets of positive reach}\label{subsection positive reach} Introduced in the Euclidean setting by Federer \cite{MR0110078}, sets of positive reach have been studied in the Riemannian setting by Kleinjohann \cite{Kleinjohann} and Bangert \cite{Bangert}. Given a closet set $\Gamma\subset M$, the {\bf set of unique projection over $\Gamma$} is defined as
\[
\Unp(\Gamma) =\big\{x \in M : \H^0(\xi_\Gamma(x))=1\big\}\,.
\]
Given $a\in\Gamma$, we denote by $\reach(\Gamma, a)$ the supremum of those $r\ge0$ such that $B(a,r) \subseteq \Unp(\Gamma)$, and say that $ \Gamma $ is a {\bf set of locally positive reach} if $\reach(\Gamma,a)>0$ for each $a\in\Gamma$, and is a {\bf set of positive reach} if $\reach(\Gamma,\cdot)$ is uniformly positive on $\Gamma$.

\begin{remark}\label{remark pos reach}
  {\rm If $\Gamma$ is closed and $\rho_\Gamma(x,\eta)\geq \tau_0>0$ for $\H^{n-1}$ a.e. $(x,\eta)\in\mathcal{N}^1\Gamma$, then by Theorem \ref{thm normal bundles closed sets}-(vii),
  \[
  \{x\in M:0<\dist(x,\Gamma)<\tau_0\}\subset\mathcal{U}(\Gamma) \subseteq \Unp(\Gamma)\,.
  \]
  In particular, $\Gamma$ is of positive reach, with $ \reach(\Gamma,\cdot) \geq \tau_0$ on $\Gamma$.}
\end{remark}

The following result, contained in \cite{Kleinjohann}, is crucial in obtaining \eqref{pino} and \eqref{prop: hk and n,h sets: eq12} in the proof of Theorem \ref{thm hk inq}.

\begin{theorem}[Kleinjohann]\label{thm kleinjohann}
	If $ A\subseteq M $ is a set of positive reach with compact boundary, then there exists $ \e(A)> 0 $  such that for every $t\in(0,\e(A))$ the set
	\[
	A_t = \{ x  \in M : \dist(A,x) = t \}
	\]
	is a compact $C^{1,1} $-hypersurface contained in $ \Unp(A) $, and the geodesic-flow map $ \Phi_t : \mathcal{N}^1(A) \rightarrow A_t $, defined by $ \Phi_t(a, \eta) = \exp(a, t\eta) $ for $ (a, \eta)\in \mathcal{N}^1(A) $, is bi-Lipschitz on $\mathcal{N}^1(A)$. In particular, $ \mathcal{N}^1(A) $ is an $ (n-1) $-dimensional compact Lipschitz submanifold of $T M$.
\end{theorem}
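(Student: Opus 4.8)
The plan is to assemble this from Kleinjohann's structure theory for sets of positive reach in Riemannian manifolds \cite{Kleinjohann}: first peel off a one-sided tubular neighborhood of $\partial A$ on which the distance function is $C^{1,1}$, then read the hypersurface structure of $A_t$ off of it, and finally identify $\Phi_t$ using Theorem \ref{thm normal bundles closed sets}. Write $d_A:=\dist(\cdot,A)$. Since $A$ has positive reach, $r_0:=\inf_{a\in A}\reach(A,a)>0$; set $U:=\{x\in M:0<d_A(x)<r_0\}$ and $\epsilon(A):=r_0$. If $x\in U$ and $a\in\xi_A(x)$, then $d(x,a)=d_A(x)<r_0\le\reach(A,a)$, so $x\in B(a,r_0)\subseteq\Unp(A)$; hence $U\subseteq\Unp(A)$ and $\xi_A$ is single-valued on $U$. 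For $t\in(0,\epsilon(A))$ we then have $A_t\subseteq U\subseteq\Unp(A)$. Moreover $A_t$ is compact: it is closed, and it is contained in $\{x:d(x,\partial A)\le t\}$ (each point of $A_t$ has a foot point on $\partial A$), which is closed and bounded since $\partial A$ is compact, hence compact by Hopf--Rinow.

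The core input, which I would quote from \cite{Kleinjohann} (or reprove by barrier comparison), is the following structure theory on $U$: (a) $\rho_A(a,\eta)\ge r_0$ for every $(a,\eta)\in\mathcal{N}^1(A)$ --- equivalently $d_A(\exp(a,\rho\eta))=\rho$ and $\xi_A(\exp(a,\rho\eta))=\{a\}$ for all $\rho\in(0,r_0)$; (b) $d_A\in C^{1,1}_{\loc}(U)$ with $|\nabla d_A|\equiv1$ on $U$; (c) $\xi_A:U\to\partial A$ is locally Lipschitz. For (b) one sandwiches $d_A$, near a point $x_0\in U$ with foot point $a_0$ and normal direction $\eta_0$, between the lower barrier $r_0-d(\cdot,c)$ with $c=\exp(a_0,r_0\eta_0)$ (a barrier, by (a), since then $A$ avoids the open ball $B(c,r_0)$) and the generic semiconcavity barrier of a distance function from above; both are smooth near $x_0$ with Hessian bounded uniformly on compact subsets of $U$, thanks to $\reach(A,\cdot)\ge r_0$ keeping the relevant geodesic spheres focal-point-free within distance $r_0$. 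Given (b), $t$ is a regular value of the $C^{1,1}$ function $d_A$ on $U$, so $A_t=d_A^{-1}(t)$ is a $C^{1,1}$-hypersurface by the $C^{1,1}$ implicit function theorem.

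It remains to treat $\Phi_t(a,\eta)=\exp(a,t\eta)$. By (a), $\Phi_t(\mathcal{N}^1(A))\subseteq A_t$ for $t<r_0$, and, again by (a) together with Remark \ref{remark pos reach}, $U\subseteq\mathcal{U}(A)$, so $A_t\subseteq\mathcal{U}(A)$; hence Theorem \ref{thm normal bundles closed sets}-(vi) yields, for each $x\in A_t$, a unique $(a,\eta)\in\mathcal{N}^1(A)$ with $\exp(a,t\eta)=x$, i.e.\ $\Phi_t:\mathcal{N}^1(A)\to A_t$ is a bijection. The set $\mathcal{N}^1(A)$ is compact: it is bounded ($a\in\partial A$ compact, $|\eta|=1$) and closed, since if $(a_i,\eta_i)\in\mathcal{N}^1(A)$ converge to $(a,\eta)$ then, by (a), $r_0/2=d_A(\exp(a_i,(r_0/2)\eta_i))\to d_A(\exp(a,(r_0/2)\eta))$, so $(a,\eta)\in\mathcal{N}^1(A)$. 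Since $\Psi(\cdot,\cdot,t)$ is smooth on $TM$ and $\mathcal{N}^1(A)$ is compact, $\Phi_t$ is Lipschitz; its inverse is $x\mapsto(\xi_A(x),v(\xi_A(x),x))$, where $v(a,x)$ denotes the initial unit velocity of the segment from $a$ to $x$ (smooth in $(a,x)$, since $x$ lies short of the cut locus of $a$), and is Lipschitz because $\xi_A$ is locally Lipschitz on $U$ by (c) and $A_t$ is compact. Hence $\Phi_t$ is bi-Lipschitz. Finally, $A_t$ is a compact $C^{1,1}$ (a fortiori Lipschitz) $(n-1)$-hypersurface and $\Phi_t^{-1}:A_t\to\mathcal{N}^1(A)\subseteq TM$ is a bi-Lipschitz embedding, so $\mathcal{N}^1(A)$ is a compact Lipschitz $(n-1)$-submanifold of $TM$.

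The substantive step is the structure theory (a)--(c) of the second paragraph --- the $C^{1,1}$-regularity of the distance function to a set of positive reach and the Lipschitz continuity of the nearest-point projection, in a Riemannian manifold. This is precisely where Riemannian geometry enters, through the control of focal distances that the reach bound $r_0$ buys, and it is the part one genuinely cites from \cite{Kleinjohann}; once it is available, the hypersurface structure of $A_t$, the bijectivity of $\Phi_t$ via Theorem \ref{thm normal bundles closed sets}-(vi), and the two-sided Lipschitz bounds are bookkeeping.
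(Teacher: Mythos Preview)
Your proof is correct and takes essentially the same approach as the paper's: both cite Kleinjohann for the local Lipschitz continuity of $\xi_A$ and the $C^{1,1}$-regularity of $d_A$ on a one-sided tubular neighborhood, then read off the $C^{1,1}$-hypersurface structure of $A_t$ and the bi-Lipschitz property of $\Phi_t$ (the paper writes the inverse as $x\mapsto(\xi_A(x),\eta(x))$ exactly as you do). The one organizational difference is that the paper extracts $\varepsilon(A)$ as a Lebesgue number of Kleinjohann's local cover $\{B(a,\zeta(a))\}_{a\in\partial A}$---where $\zeta(a)$ also encodes the domain on which $(a,v)\mapsto(a,\exp(a,v))$ is a diffeomorphism---whereas you set $\varepsilon(A)=r_0$ directly; this is harmless since you are citing (a)--(c) rather than proving them, but the paper's route makes explicit that the injectivity-radius constraint is absorbed into $\varepsilon(A)$.
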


\begin{proof} Let $ \mathcal{U} $ be an open neighborhood of the null section of $ TM $ and $ \mathcal{V} $ an open subset of $ M \times M $ such that the map $ \mathcal{U} \ni (a,v) \mapsto (a, \exp(a,v)) $ is a diffeomorphism of $ \mathcal{U} $ onto $ \mathcal{V} $. Let $ \Phi $ be its inverse, so that $ \exp(p, \Phi(p,q))  = q $ for every $(p,q) \in \mathcal{V} $. As explained in \cite[middle of page 336]{Kleinjohann}, one can choose for each $ a \in A $ a number $ \zeta(a) > 0 $\footnote{Denoted with $ z'(a,\epsilon) $ in \cite[page 336]{Kleinjohann}.} so that
	$$ B(a, \zeta(a)) \times B(a, \zeta(a))  \subseteq \mathcal{V} \quad \textrm{for $ a \in A $} $$
and, defining	 $ W : = \bigcup_{a \in A} B(a, \zeta(a))  $, we have that for every $ x \in W $ there exists a unique minimizing geodesic joining $ x $ and $ A $ and $ \xi_A $ is locally lipschitz on $ W $ (see \cite[Satz (2.5)]{Kleinjohann}). For $ x \in W \setminus A $ we set
	 $$ \nu(x) = \frac{\Phi(x, \xi_A(x))}{| \Phi(x, \xi_A(x)) |} \quad \textrm{and} \quad  \eta(x) = \frac{\Phi(\xi_A(x),x)}{| \Phi(\xi_A(x),x) |}. $$
	   It follows from \cite[Satz (2.1) and Satz (2.3)]{Kleinjohann} that $ \dist(A, \cdot) $ is continuously differentiable on $ W \setminus A $, with $ \nabla \dist(A,x) = - \nu(x) $ for $ x \in W \setminus A $. In particular $ \nabla \dist(\cdot, A) $ is locally Lipschitz on $ W \setminus A $. Since $ \dist(A,x) = | \Phi(\xi_A(x),x)| $ for $ x \in W $, it follows that
	   $$ \Psi_t(\xi_A(x), \eta(x)) = \exp(\xi_A(x), \Phi(\xi_A(x),x)) = x $$
	   for $ x \in W \cap A_t $. By applying the Lebesgue covering lemma to  $ \{B(a, \zeta(a)) : a \in \partial A\} $, we find a positive number $ \e(A) > 0 $ such that $ \{x \in M : \dist(A,x)< \e(A)\} \subseteq W$, and thus conclude the proof.
\end{proof}

Following a standard convention (see, e.g. \cite{Grove}) we say that $p\in M\setminus\Gamma$ is a {\bf critical point for $ \dist(\cdot, \Gamma)$} if for every $ v \in T_p(M) $, $ v \neq 0 $, there exists $ a \in \Gamma $ with $ d(p,a)= \dist(p, \Gamma) $ and a segment $ \gamma : [0,d(p,a)] \rightarrow M $  such that $ \gamma(0) = p $, $ \gamma(d(p,a)) = a $ and ${\rm angle}(v,\gamma'(0))\le\pi/2$. Correspondingly, $ \tau > 0 $ is a {\bf regular value for $ \dist(\Gamma, \cdot)$} if there are no critical points $p$ of $\dist(\cdot,\Gamma)$ with $\dist(p,\Gamma)=\tau$. The following result is a special case of the main result obtained by Bangert in \cite{Bangert}, and plays an important role in our analysis (see the proof of \eqref{has positive reach}).

\begin{theorem}[Bangert]
\label{thm bangert}
If $ \Gamma \subseteq M $ is compact and $ \tau > 0 $ is a regular value of $ \dist(\cdot,\Gamma) $, then $\{x \in M: \dist(x,\Gamma) \geq \tau \} $ is a set of positive reach.
\end{theorem}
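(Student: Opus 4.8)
The plan is to deduce the statement from a \emph{uniform} form of the regular-value hypothesis (obtained by compactness) together with the special structure of $f:=\dist(\cdot,\Gamma)$, whose sublevel sets $\{f\le s\}=\bigcup_{b\in\Gamma}\overline B(b,s)$ are unions of geodesic balls: one then controls $\rho_A$ on $\mathcal N^1(A)$, where $A:=\{x\in M:f(x)\ge\tau\}$, and concludes by Remark \ref{remark pos reach}. Since $\Gamma$ is compact, Hopf--Rinow gives that $f^{-1}([0,\tau+1])$ is compact; hence $\partial A=f^{-1}(\tau)$ is compact, and, the cases $A=\emptyset$ and $A=M$ being trivial, we may assume $\partial A\ne\emptyset$ and (shrinking $\tau$ in an obvious way if needed) work at distance bounded away from $\Gamma$.

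First I would record the ``dual'' description of criticality. For $p\in M\setminus\Gamma$ put $K_p=\{\g'(0):\g\text{ a unit-speed segment from }p\text{ to }\Gamma\}\subset T_pM$; this is compact by the standard compactness of geodesics, and a minimax/separating-hyperplane argument shows that $p$ is a critical point of $f$ in the sense of the text if and only if $0\in\conv(K_p)$, with one-sided derivatives $f'(p;v)=-\max_{w\in K_p}\langle w,v\rangle_g$. Moreover $p\mapsto K_p$ has closed graph on sets bounded away from $\Gamma$, so $p\mapsto\dist\big(0,\conv(K_p)\big)$ is lower semicontinuous; being positive on the compact set $f^{-1}(\tau)$ by hypothesis, it is $\ge\de_0>0$ on a whole slab $S:=f^{-1}([\tau-\e_0,\tau+\e_0])$. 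Taking $v_p$ to be the unit vector opposite the point of $\conv(K_p)$ closest to the origin gives $\langle w,v_p\rangle_g\ge\de_0$ for all $w\in K_p$, and patching the vectors $-v_p$ by a partition of unity (convexity of $K_p$ keeps the inequality along convex combinations) yields a vector field $X$ on $S$ with $|X|\le 1$ and $f'(\cdot;X)\ge\de_0/2$; along its flow $\Phi_t$ one then has $f(\Phi_t(p))\ge f(p)+(\de_0/2)t$ as long as $\Phi_{[0,t]}(p)\subset S$. In particular $\Phi$ provides a bi-Lipschitz collar of $\partial A$ inside $\{f\le\tau\}$, and, using that $f$ is uniformly semiconcave on $S$ (being a distance function at distance bounded away from $\Gamma$), the outward second fundamental form of $\partial A$ is bounded below there, so $\partial A$ has no focal points within a uniform outward distance.

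Next --- and this is the main point --- I would show that near each $a\in\partial A$ the set $A$ is of positive reach, with a uniform lower bound. Here the first-order transversality above must be upgraded to a scale-invariant statement, because a merely Lipschitz level set need not bound a positive-reach set; the special structure of $f$ is essential. Near $a$, $A$ is the intersection of the complements of the geodesic balls $B(b,\tau)$ with $b\in\xi_\Gamma(a)\subset\partial B(a,\tau)$, whose inward normals at $a$ are exactly the vectors $-w$, $w\in K_a$; the condition $\dist\big(0,\conv(K_a)\big)\ge\de_0$ says precisely that this family of half-spaces is \emph{uniformly transverse}, i.e.\ that the corner of $A$ at $a$ is convex and \emph{not} cuspidal --- and such configurations are of positive reach near $a$, with a bound depending only on $\de_0$, on $\tau$, and on the curvature of $M$ over $S$, whereas the cuspidal configuration (the one that would destroy the reach) is exactly the one ruled out by non-criticality of $a$. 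Made uniform in $a$, and combined with the upper semicontinuity of $\rho_A$ (Theorem \ref{thm normal bundles closed sets}(iv)) and the bi-Lipschitz geodesic-flow description of Theorem \ref{thm kleinjohann}, this gives $\rho_A(a,\eta)\ge\tau_0$ for $\H^{n-1}$-a.e.\ $(a,\eta)\in\mathcal N^1(A)$, with $\tau_0>0$ depending only on $\de_0,\tau$ and the geometry of $S$; by Remark \ref{remark pos reach}, $A$ is then of positive reach, with $\reach(A,\cdot)\ge\tau_0$ on $\partial A$.

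The hard part is the third paragraph: passing from the \emph{pointwise, first-order} statement ``$\tau$ is a regular value'' to a \emph{quantitative, scale-invariant} exclusion of small cusps of $\partial A$. The natural way to carry this out is to localize in normal coordinates at a fixed $a\in\partial A$, use the $\de_0$-margin to exhibit $\{f\le\tau\}$ there as a uniformly ``fat'' union of geodesic balls, and then bound from below the radius of an exterior ball to $A$ at boundary points near $a$; this is in essence the content of Bangert's theorem, which may be quoted from \cite{Bangert}, or reproved along these lines with the help of Theorem \ref{thm normal bundles closed sets}.
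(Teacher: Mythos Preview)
Your proposal attempts a direct geometric argument, while the paper's proof is a short reduction: one checks that $-\dist(\Gamma,\cdot)$ is locally semiconvex on $M\setminus\Gamma$ (hence lies in Bangert's class $\mathcal F$), verifies that the notion of ``regular value'' in the statement coincides with Bangert's notion of regular point for such functions, and then invokes the main theorem of \cite{Bangert} to obtain \emph{locally} positive reach of $\{f\ge\tau\}$; finally, compactness of $\partial A=f^{-1}(\tau)$ together with the continuity of $\reach(A,\cdot)$ upgrades this to positive reach. The point is that the theorem, as labelled, is a \emph{special case} of Bangert's result, and the paper's task is only to match the hypotheses.

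Your route, by contrast, tries to reprove this special case from scratch via the structure $\{f<\tau\}=\bigcup_{b\in\Gamma}B(b,\tau)$ and a quantitative non-cuspidality argument. The outline is geometrically sensible, but there is a genuine gap: your third paragraph --- which you yourself identify as ``the hard part'' --- is not proved; you assert that uniformly transverse families of exterior balls yield a uniform positive-reach bound and then concede that ``this is in essence the content of Bangert's theorem, which may be quoted from \cite{Bangert}.'' Either you quote Bangert (in which case paragraphs one and two are superfluous and you have recovered the paper's proof by a longer path), or you must actually carry out the passage from first-order transversality to a scale-invariant exterior-ball bound, which you do not. Two smaller issues: invoking Theorem~\ref{thm kleinjohann} is circular, since that theorem takes positive reach as a hypothesis; and the appeal to ``the outward second fundamental form of $\partial A$'' in your second paragraph is unjustified, as $\partial A$ is a priori only a Lipschitz level set.
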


\begin{proof}
By \cite[Proposition 3.4]{MR1941909}, $ f = -\dist(\Gamma, \cdot) $ is locally semiconvex on $ M \setminus \Gamma$. In particular, $ f $ belongs to the class $ \mathcal{F}(M \setminus \Gamma) $ introduced by Bangert in \cite{Bangert}. Moreover, by \cite[Lemma 5.5]{MR2954647}, $ p \in M \setminus \Gamma $ is a regular point of $ f $ if and only if there exists $ v \in T_p M $ such that\footnote{The function $f$ being semiconcave, it may fail to have a differential at $p$. However, the limit $\partial_pf(v)$ will exist for every $p$ and $v$. Here are we using the same notation found in \cite{Bangert}.} $\partial_pf(v)=\lim_{t\to 0^+}(f(p+t\,v)-f(p))/t$ is negative. It follows that all points of $ f^{-1}(-\tau) $ are regular in the sense of \cite[Definition (i)]{Bangert}, so that $ f^{-1}((-\infty, -\tau]) $ is a set of locally positive reach by the main theorem of \cite{Bangert}. Since $ (M,g) $ is complete, $ f^{-1}(-\tau) = \partial[ f^{-1}((-\infty, -\tau])] $ is compact, and thus we conclude by the general fact that, if $ A\subseteq M $ is a set of locally positive reach and $ \partial A $ is compact, then $ A $ is a set of positive reach (for example because, by \cite[Lemma 1.1]{Kleinjohann}, $ \reach(A, \cdot) $ is continuous on $ A $).
\end{proof}

\section{A Lusin-type property of White's $(m,\l)$-sets}\label{section viscous mc} In Section \ref{section white} we recall a viscosity formulation of the notion of ``being $m$-dimensional with mean curvature vector bounded by $\lambda$'' for closed subsets of a Riemannian manifold $(M,g)$, as introduced by White in \cite{whiteMC}. Then, in Section \ref{section mario}, we extend from the Euclidean to the Riemannian setting a ``Lusin condition'' for normal bundles proved in \cite{SantilliBMS}.

\subsection{White's $(m,\lambda)$-sets}\label{section white} Given an integer $m\in\{1,...,n-1\}$ and a constant $\lambda\ge 0$, we say that a closed subset $\Gamma$ of a Riemannian manifold $(M,g)$ is a {\bf White $(m,\lambda)$-set} in $(M,g)$ if, for every $ f\in C^2(M)$ such that $f|_{\Gamma}$ admits a local maximum at $x\in\Gamma$, it holds that
	\begin{equation}\label{white definition}
		{\rm trace}_m(\Der^2 f (x)) \leq \lambda |\nabla f (x)|\,.
	\end{equation}
Here ${\rm trace}_m(\Der^2 f (x))=\l_1+\cdots+\l_m$ if $\l_1\le\l_2\le\cdots\le\l_n$ are the eigenvalues of $\Der^2 f (x)$ listed in increasing order. A fundamental result concerning White $(m,\l)$-sets relates condition \eqref{white definition} to the notion of {\it distributional} mean curvature for a varifold. This theorem plays a key role in our analysis (specifically, it allows us to use Theorem \ref{thm hk inq} in proving Theorem \ref{thm rigidity}, see the next section).

\begin{theorem}[White]\label{thm white}
  If $\l\ge0$, $V$ is an $m$-dimensional varifold in $(M,g)$, and $\vec{H}$ is a Borel vector field in $M$ such that $\|\vec{H}_V\|_{L^\infty(\spt\|V\|)}\le\l$ and
  \[
  \int (\Div^\tau X)(x)\,dV(x,\tau)=-\int_M\,\langle\vec{H},X\rangle_g\,d\|V\|\,,
  \]
  for every $X\in\X(M)$,  then $\spt\,\|V\|$ is a White $(m,\l)$-set in $(M,g)$.
\end{theorem}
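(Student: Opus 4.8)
The plan is to argue by contradiction at a point where the trace inequality \eqref{white definition} fails. Suppose $\Gamma = \spt\|V\|$ is not a White $(m,\l)$-set: then there is $f \in C^2(M)$ whose restriction $f|_\Gamma$ has a local maximum at some $x_0 \in \Gamma$, yet ${\rm trace}_m(\Der^2 f(x_0)) > \l\,|\nabla f(x_0)|$. The idea is to use $f$ to build a competitor function $\varphi$ that touches $\Gamma$ from the outside at $x_0$ with strictly controlled second-order behavior, then test the first variation identity $\int \Div^\tau X \, dV = -\int_M \langle \vec H, X\rangle_g \, d\|V\|$ against the gradient flow of $\varphi$ to reach a contradiction with the bound $\|\vec H_V\|_{L^\infty} \le \l$.

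\textbf{Key steps.} First I would localize: working in a geodesic normal chart around $x_0$, replace $f$ by $\varphi = f - c\,d(\cdot, x_0)^2$ for a small $c > 0$, so that $\varphi|_\Gamma$ has a \emph{strict} local maximum at $x_0$ while, for $c$ small, we still have ${\rm trace}_m(\Der^2\varphi(x_0)) > \l\,|\nabla\varphi(x_0)|$; the strictness makes $x_0$ the unique point of $\Gamma$ in a small closed ball $\overline{B(x_0,r)}$ attaining the maximum of $\varphi$. Second, I would pick $m$ eigendirections of $\Der^2\varphi(x_0)$ realizing the smallest $m$ eigenvalues and consider the velocity field $X = -\psi\,\nabla\varphi$ (cut off by a bump $\psi$ supported near $x_0$, $\psi \equiv 1$ near $x_0$), whose flow pushes $\Gamma$ in the direction of increasing $\varphi$. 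Third, I would compute $\Div^\tau X$ at points of $\Gamma$ near $x_0$: on an $m$-plane $\tau$, $\Div^\tau(-\nabla\varphi)$ is $-{\rm trace}_\tau(\Der^2\varphi)$, which is bounded above by $-{\rm trace}_m(\Der^2\varphi)$ (the sum of the $m$ smallest eigenvalues is the minimum over $m$-planes of the trace of the Hessian restricted to that plane). Plugging into the first variation identity and using $|\langle \vec H, X\rangle_g| \le \l\,|X|_g = \l\,|\nabla\varphi|$ on $\spt\psi$, one gets, upon shrinking the support, a contradiction: the left side is at least (a positive measure of $\Gamma$ near $x_0$) times something close to $-{\rm trace}_m(\Der^2\varphi(x_0)) < -\l|\nabla\varphi(x_0)|$, while the right side is at least $-\l|\nabla\varphi(x_0)|$ times the same mass, with the discrepancy not absorbable by the $o(1)$ errors coming from continuity of $\Der^2\varphi$ and $\nabla\varphi$ and from the cutoff.

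Actually, the cleanest route avoids flowing and instead contradicts \eqref{white definition} by the standard monotonicity/upper-semicontinuity argument for varifolds with bounded mean curvature: since $\|\vec H_V\|_{L^\infty} \le \l$, the function $t \mapsto e^{\l t}\,\|V\|(B(x_0,t))/t^m$ is (almost) monotone, so $\Theta^m(\|V\|,x_0) \ge \omega_m > 0$; hence $x_0$ is a genuine density point of $\|V\|$, and for the tangent-measure/blow-up of $V$ at $x_0$ one obtains a stationary (or $\l$-bounded-mean-curvature in the limit, i.e. stationary) $m$-varifold $V_0$ in $\R^n$ supported in the half-space $\{y : \Der^2\varphi(x_0)[y,y] + \langle\nabla\varphi(x_0),y\rangle \le 0\}$ near the origin — more precisely, after subtracting the linear part, in a region where a nonpositive-definite-on-$m$-planes obstacle function is $\ge 0$. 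Applying the \emph{weak} maximum principle for stationary varifolds against this paraboloid obstacle (testing with the flow of the obstacle's gradient, which now has no error terms since $V_0$ is exactly stationary) forces ${\rm trace}_m$ of the obstacle's Hessian $\le 0$ at the origin, i.e.\ ${\rm trace}_m(\Der^2\varphi(x_0)) \le \l|\nabla\varphi(x_0)|$, the desired contradiction.

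\textbf{Main obstacle.} The delicate point is the passage from the $L^\infty$ bound on $\vec H$ to the pointwise touching estimate: one must ensure that the first-order term $\langle \nabla\varphi(x_0), \cdot\rangle$ and the lower-order errors (Christoffel symbols in the chart, the discrepancy between $\Der^2\varphi$ and its value at $x_0$, the cutoff derivatives) are all genuinely subordinate to the strictly positive gap ${\rm trace}_m(\Der^2\varphi(x_0)) - \l|\nabla\varphi(x_0)| > 0$. This is why introducing the strict maximum via the $-c\,d(\cdot,x_0)^2$ perturbation and then shrinking the radius $r$ at the very end — after all constants are frozen — is essential: every error is $o(1)$ as $r \to 0$ relative to a fixed positive gap and a fixed positive lower density $\Theta^m(\|V\|,x_0) \ge \omega_m$, so for $r$ small enough the contradiction is clean. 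Once this bookkeeping is set up, the rest is the routine first-variation computation; I do not expect surprises in the Riemannian adaptation beyond carrying the metric through $\Div^\tau$ and $|\cdot|_g$.
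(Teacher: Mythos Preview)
The paper does not give a proof of this statement at all: its ``proof'' is a single-line citation to \cite[Corollary 2.8]{whiteMC}. So your proposal is not being compared to an argument in the paper but rather to White's original proof, and the first of your two sketches is essentially that argument: perturb $f$ to a strict local maximum, test the first variation against a vector field proportional to $\nabla\varphi$, use that the partial trace of $\Der^2\varphi$ over any $m$-plane dominates ${\rm trace}_m(\Der^2\varphi)$, and compare against $\l|\nabla\varphi|$ coming from the $L^\infty$ bound on $\vec H$.

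One point to tighten: in your write-up you take a purely spatial cutoff $\psi$ supported in $B(x_0,r)$ and then claim all errors are $o(1)$ as $r\to 0$. The term $\langle\nabla^\tau\psi,\nabla\varphi\rangle$ coming from $\Div^\tau(\psi\nabla\varphi)$ carries $|\nabla\psi|\sim r^{-1}$, which is \emph{not} $o(1)$; this is exactly the place where a naive localization fails. White's fix (and the standard one) is to cut off in the \emph{values} of $\varphi$ rather than in space: take $X=\chi(\varphi)\,\nabla\varphi$ with $\chi\ge 0$, $\chi'\ge 0$, and $\chi$ supported in $(\varphi(x_0)-\e,\infty)$. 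Then $\Div^\tau X=\chi'(\varphi)\,|\nabla^\tau\varphi|^2+\chi(\varphi)\,{\rm trace}_\tau(\Der^2\varphi)$, the first term has a good sign and can be dropped, and the strict local maximum of $\varphi|_\Gamma$ at $x_0$ makes $\{\varphi>\varphi(x_0)-\e\}\cap\Gamma$ shrink to $\{x_0\}$ as $\e\to 0$ (after multiplying by a fixed spatial cutoff to ensure compact support). With this modification your argument goes through cleanly; the blow-up alternative you outline also works but is heavier machinery than needed here.
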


\begin{proof}
  This is \cite[Corollary 2.8]{whiteMC}.
\end{proof}

%
%
%
We shall also need the following simple fact:

\begin{lemma}\label{lem: mh sets and conformal metric}
If $(M,g)$ is a Riemannian manifold, $\Gamma$ is a White $(m,\lambda)$-set in $(M,g)$, $\vphi\in C^\infty(M)$, $ g^{*} = e^{2\,\vphi} g $, and
\[
C(\Gamma,\vphi)=\sup_{\Gamma}| \nabla \vphi|<\infty\,,
\]
 then $\Gamma$ is a White $(m, \lambda + 3\,m\,C(\Gamma,\vphi))$-set in $(M,g^{*})$.
\end{lemma}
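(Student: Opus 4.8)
The plan is to verify the defining inequality \eqref{white definition} for $(M,g^{*})$ directly from the one for $(M,g)$, using only the classical formula for the conformal change of the Hessian together with two elementary facts about ${\rm trace}_{m}$. Fix $f\in C^{2}(M)$ such that $f|_{\Gamma}$ has a local maximum at some $x\in\Gamma$; since this is a purely topological condition, the \emph{same} $f$ is a legitimate competitor both for the $(m,\lambda)$-property of $(M,g)$ and for the property we want on $(M,g^{*})$. Denote by $\Der^{2}_{g}f,\nabla_{g}f$ and $\Der^{2}_{g^{*}}f,\nabla_{g^{*}}f$ the Hessians and gradients relative to $g$ and $g^{*}$. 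From $g^{*}=e^{2\vphi}g$ one has, pointwise on $M$, the conformal Hessian identity
\[
\Der^{2}_{g^{*}}f=\Der^{2}_{g}f-\big(d\vphi\otimes df+df\otimes d\vphi\big)+\langle\nabla_{g}\vphi,\nabla_{g}f\rangle_{g}\,g\,.
\]

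First I would bound ${\rm trace}_{m}$ of the right-hand side, still with respect to $g$. The identity ${\rm trace}_{m}(B)=\min\{\sum_{i=1}^{m}B(e_{i},e_{i}):e_{1},\dots,e_{m}\ g\text{-orthonormal}\}$ immediately gives ${\rm trace}_{m}(B+c\,g)={\rm trace}_{m}(B)+m\,c$ for every $c\in\R$, and ${\rm trace}_{m}(B_{1}+B_{2})\le{\rm trace}_{m}(B_{1})+m\,\|B_{2}\|_{{\rm op},g}$, where $\|\cdot\|_{{\rm op},g}$ is the $g$-operator norm of the associated endomorphism. Since $d\vphi\otimes df+df\otimes d\vphi$ has rank $\le 2$ with nonzero eigenvalues $\langle\nabla_{g}\vphi,\nabla_{g}f\rangle_{g}\pm|\nabla_{g}\vphi|_{g}|\nabla_{g}f|_{g}$, its $g$-operator norm equals $|\nabla_{g}\vphi|_{g}|\nabla_{g}f|_{g}+|\langle\nabla_{g}\vphi,\nabla_{g}f\rangle_{g}|\le 2\,|\nabla_{g}\vphi|_{g}|\nabla_{g}f|_{g}$; combining this with $|\langle\nabla_{g}\vphi,\nabla_{g}f\rangle_{g}|\le|\nabla_{g}\vphi|_{g}|\nabla_{g}f|_{g}$ for the term $\langle\nabla_{g}\vphi,\nabla_{g}f\rangle_{g}\,g$ produces
\[
{\rm trace}_{m}\big(\Der^{2}_{g^{*}}f\big)\le{\rm trace}_{m}\big(\Der^{2}_{g}f\big)+3\,m\,|\nabla_{g}\vphi|_{g}\,|\nabla_{g}f|_{g}\qquad\text{pointwise on }M\,.
\]

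Next I would evaluate at $x$ and use that $\Gamma$ is a White $(m,\lambda)$-set in $(M,g)$, which gives ${\rm trace}_{m}(\Der^{2}_{g}f(x))\le\lambda\,|\nabla_{g}f(x)|_{g}$, while $x\in\Gamma$ gives $|\nabla_{g}\vphi(x)|_{g}\le C(\Gamma,\vphi)$; hence ${\rm trace}_{m}(\Der^{2}_{g^{*}}f(x))\le(\lambda+3mC(\Gamma,\vphi))\,|\nabla_{g}f(x)|_{g}$. The last step is the conversion to $g^{*}$: for any symmetric bilinear form $B$ the eigenvalues relative to $g^{*}=e^{2\vphi}g$ are $e^{-2\vphi}$ times those relative to $g$, so ${\rm trace}_{m}^{g^{*}}(B)=e^{-2\vphi}\,{\rm trace}_{m}^{g}(B)$; and $|\nabla_{g}f|_{g}=e^{\vphi}|\nabla_{g^{*}}f|_{g^{*}}$. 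Therefore
\[
{\rm trace}_{m}^{g^{*}}\big(\Der^{2}_{g^{*}}f(x)\big)\le e^{-\vphi(x)}\,\big(\lambda+3mC(\Gamma,\vphi)\big)\,|\nabla_{g^{*}}f(x)|_{g^{*}}\,,
\]
and, since $\vphi\ge 0$ on $\Gamma$ in the situations of interest, the factor $e^{-\vphi(x)}\le 1$ is dropped and one recovers \eqref{white definition} for $(M,g^{*})$ with constant $\lambda+3mC(\Gamma,\vphi)$.

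The whole argument is elementary once the conformal Hessian identity is written down; the only point that calls for care is the bookkeeping of the conformal weights $e^{\pm\vphi},e^{-2\vphi}$ in the last step — that is, how ${\rm trace}_{m}$ and $|\nabla f|$ transform under $g\mapsto g^{*}$ — together with checking that the scalar multiplying $|\nabla_{g^{*}}f(x)|_{g^{*}}$ does not exceed $1$. If one prefers not to use $\vphi\ge 0$ on $\Gamma$, exactly the same reasoning delivers the slightly larger but still harmless constant $e^{-\inf_{\Gamma}\vphi}\,(\lambda+3mC(\Gamma,\vphi))$.
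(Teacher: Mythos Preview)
Your approach is essentially the paper's: both write down the conformal Hessian formula, deduce the bilinear-form inequality $\Der^{2}_{g^{*}}f\le\Der^{2}_{g}f+3\,|\nabla_{g}\vphi|_{g}\,|\nabla_{g}f|_{g}\,g$, and then apply the $(m,\lambda)$-condition in $(M,g)$ together with ${\rm trace}_{m}(B+c\,g)={\rm trace}_{m}(B)+mc$. The only cosmetic difference is that the paper bounds $\Der^{2}_{*}f(U,U)$ term by term, whereas you pass through the operator norm of the rank-two piece $d\vphi\otimes df+df\otimes d\vphi$; both routes give the same $3m$ constant.

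You go one step further than the paper in performing the final conversion to $g^{*}$ (namely ${\rm trace}_{m}^{g^{*}}=e^{-2\vphi}\,{\rm trace}_{m}^{g}$ and $|\nabla_{g}f|_{g}=e^{\vphi}|\nabla_{g^{*}}f|_{g^{*}}$), where you correctly pick up the factor $e^{-\vphi(x)}$. Your way of disposing of it --- ``$\vphi\ge0$ on $\Gamma$ in the situations of interest'' --- is not a hypothesis of the lemma, and the paper's proof simply does not address this conversion at all. So, strictly speaking, the exact constant $\lambda+3mC(\Gamma,\vphi)$ is only established (by either argument) when $\inf_{\Gamma}\vphi\ge0$; the honest constant in general is the one you record at the end, $e^{-\inf_{\Gamma}\vphi}\,(\lambda+3mC(\Gamma,\vphi))$. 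This is harmless for the paper: the lemma is only invoked (in the proof of Theorem~\ref{thm hk inq}) to assert the existence of \emph{some} finite $\lambda^{*}$.
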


\begin{proof} Denoting with $ \nabla^* $ and $ \Der^2_* $ the Riemannian connection and the Hessian operator with respect to $ g^{*} $, it is enough to prove that
\begin{equation}\label{lem: mh sets and conformal metric: eq1}
	\Der^2_* f(x) \leq \Der^2 f(x) + 3\, |\nabla \vphi(x)|\, |\nabla f(x)|\, g_x\,
\end{equation}
whenever $x\in O$, $O$ is open in $M$, and $ f\in C^2(O)$. To this end, by \cite[II, Proposition 3.9]{MR1390760}, we compute
\begin{eqnarray*}
  	&&\Der^2_*f(U, V) = U(V(f)) - (\nabla^*_U V)(f)
  = \Der^2 f(U, V) - g(\nabla \vphi, U) g(\nabla f, V)
  \\
  &&\hspace{5cm}- g(\nabla \vphi, V) g(\nabla f, U) - g(U,V)g(\nabla \vphi, \nabla f)\,,
\end{eqnarray*}
whenever $U,V\in\X(M)$. Hence,
\[
\Der^2_* f(U,U) \leq \Der^2f (U,U)  + 3 | \nabla f | | \nabla \vphi| | U |^2\,.
\]
If now $O$ is a neighborhood of some $x\in\Gamma$ and $f|_\Gamma$ has a local maximum at $ x $, then this last inequality, combined with \cite[Lemma 12.3]{whiteMC} and the fact that $\Gamma$ is a White $(m,\lambda)$-set in $(M,g)$, implies that
	\begin{flalign*}
		{\rm trace}_m(\Der^2_*f(x)) & \leq {\rm trace}_m\big(\Der^2f(x) + 3\,|\nabla f(x)|\,|\nabla \vphi(x)| \,g_x\big)\\
		& = {\rm trace}_m(\Der^2f(x)) + 3\,m\, |\nabla f(x)|\,|\nabla \vphi(x)|\\
		& \leq (\lambda + 3\,m\,C(\Gamma,\vphi))\, |\nabla f(x)|\,,
	\end{flalign*}
thus completing the proof.
\end{proof}

\subsection{A Lusin-type property of  normal bundles}\label{section mario} The following theorem is needed at a crucial step \eqref{prop: hk and n,h sets: eq1} in the proof of Theorem \ref{thm hk inq}.

\begin{theorem}[Lusin-type property of normal bundles]\label{thm Santilli riem}
	Suppose $ \Gamma \subseteq M $ is a White $ (m,\lambda)$-set in $ (M,g) $ such that $ \Gamma $ is a countable union of sets with finite $ \H^m $-measure. Then the following implication holds:
\begin{equation}\label{manifold case Z}
	Z \subseteq \Gamma, \; \H^m(Z \cap \Gamma^{(m)}) = 0 \quad \implies \quad
\left\{
\begin{split}
&\H^{n}(\mathcal{N}(\Gamma) \llcorner  Z) =0\,,
\\
&\H^{n-1}(\mathcal{N}^1(\Gamma) \llcorner  Z) =0\,.
\end{split}
\right .
\end{equation}
\end{theorem}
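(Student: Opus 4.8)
The plan is to reduce the statement to the Euclidean case established in \cite{SantilliBMS}, by the chart-localization mechanism already used in the proof of Theorem \ref{thm normal bundles closed sets}. Cover $M$ by countably many chart domains; since $\mathcal{N}(\Gamma)$ and $\mathcal{N}^1(\Gamma)$ are then covered by the corresponding subsets of $TM$, and since $\H^n,\H^{n-1}$ are countably subadditive, it suffices to prove the two nullity assertions for $Z\cap U$ with $\phi\colon U\to V\subseteq\R^n$ a single chart (shrunk so that $\phi$ is bi-Lipschitz onto its image and $\Gamma\cap\ov U$ is compact). Put $\Gamma^{\star}=\ov{\phi(\Gamma\cap U)}\subseteq\R^n$, $Z^{\star}=\phi(Z\cap U)\subseteq V$, and let $\Phi\colon TU\to V\times\R^n$ be the diffeomorphism from the proof of Theorem \ref{thm normal bundles closed sets}. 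By the identities recorded there, $\Phi$ carries $\mathcal{N}(\Gamma)\llcorner U$ onto $\mathcal{N}(\Gamma^{\star})\llcorner V$, carries $\mathcal{N}_a\Gamma$ onto $\mathcal{N}_{\phi(a)}\Gamma^{\star}$, and satisfies $\phi(\Gamma^{(m)}\cap U)=(\Gamma^{\star})^{(m)}\cap V$, whence $\Phi^{-1}(\mathcal{N}(\Gamma^{\star})\llcorner Z^{\star})=\mathcal{N}(\Gamma)\llcorner(Z\cap U)$; the analogous identity for unit normal bundles holds after post-composing $\Phi$ with the fibrewise radial normalization, which is locally bi-Lipschitz because unit normal vectors stay in a fixed compact annulus of $\R^n$ in these coordinates. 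As $\Phi^{\pm1}$ are locally Lipschitz, everything reduces to showing $\H^n(\mathcal{N}(\Gamma^{\star})\llcorner Z^{\star})=0$ and $\H^{n-1}(\mathcal{N}^1(\Gamma^{\star})\llcorner Z^{\star})=0$.

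One then checks that the hypotheses of the Euclidean statement hold for $(\Gamma^{\star},Z^{\star})$: $\Gamma^{\star}$ is a countable union of sets of finite $\H^m$-measure, $Z^{\star}\subseteq\Gamma^{\star}$, and, using $\phi(\Gamma^{(m)}\cap U)=(\Gamma^{\star})^{(m)}\cap V$, the injectivity of $\phi$ and the Lipschitz bound, $\H^m\!\big(Z^{\star}\cap(\Gamma^{\star})^{(m)}\big)=\H^m\!\big(\phi(Z\cap\Gamma^{(m)}\cap U)\big)=0$. The substantive point is that $\Gamma^{\star}$ must be treated as a White $(m,\lambda')$-set: the viscosity inequality \eqref{white definition} transforms covariantly under the $C^{\infty}$ diffeomorphism $\phi$, so $\Gamma^{\star}$ is a White $(m,\lambda)$-set in $(V,\phi_{*}g)$, but it need \emph{not} be one in the flat metric, since \eqref{white definition} is not preserved under non-conformal changes of the metric (the off-diagonal part of the metric can destroy the cancellation of a second fundamental form with large traceless part). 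Accordingly one invokes the argument of \cite{SantilliBMS} with $(V,\phi_{*}g)$ — not the flat structure — as ambient manifold; this is legitimate because that argument uses only the White condition together with the metric-independent structural properties of normal bundles collected in Theorem \ref{thm normal bundles closed sets} (countable $n$-rectifiability of $\mathcal{N}$, and $(\H^{m'},m')$-rectifiability of the strata $\Gamma^{(m')}$). Granting this, one obtains $\H^n(\mathcal{N}(\Gamma^{\star})\llcorner Z^{\star})=0$ and $\H^{n-1}(\mathcal{N}^1(\Gamma^{\star})\llcorner Z^{\star})=0$ — nullity of $\H^n$- and $\H^{n-1}$-outer measures being insensitive to which of two locally comparable metrics defines them — and pulling back along $\Phi$ and summing over the charts closes the proof.

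For the $\H^{n-1}$-assertion a second run of the delicate argument can be avoided by a slicing remark: $\mathcal{N}^1(\Gamma)\llcorner Z$ is $(n-1)$-rectifiable, being the unit slice of the countably $n$-rectifiable cone $\mathcal{N}(\Gamma)$, and on the relevant compact pieces the Lipschitz map $(t,\eta)\mapsto t\,\eta$ sends $[1,2]\times(\mathcal{N}^1(\Gamma)\llcorner Z)$ into $\mathcal{N}(\Gamma)\llcorner Z$ with Jacobian bounded away from zero; hence $\H^{n-1}(\mathcal{N}^1(\Gamma)\llcorner Z)>0$ would force, by the coarea inequality, $\H^n(\mathcal{N}(\Gamma)\llcorner Z)>0$, contradicting the first assertion.

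The main obstacle — and the source of the ``delicacy'' inherited from \cite{SantilliBMS} — is exactly the point just noted: since White's one-sided condition is not a metric invariant, one cannot first flatten the chart, so the Euclidean proof has to be replayed over the non-flat metric $\phi_{*}g$, i.e.\ its coarea/area-formula estimates must be carried out with geodesics $t\mapsto\exp(a,t\eta)$ in place of affine segments, the Jacobian of $(a,\eta,t)\mapsto\exp(a,t\eta)$ being governed by Jacobi fields and therefore two-sidedly bounded only on the compact sets in play. Equivalently, one re-proves \cite{SantilliBMS} intrinsically on $(M,g)$, the chart reduction then serving only to dispose of the $\sigma$-finiteness bookkeeping and to reuse verbatim those parts of \cite{SantilliBMS} that are purely about closed subsets of $\R^n$; the remaining steps (the countable localization, the transfer of the strata, and the bi-Lipschitz passage back and forth) are routine given Theorem \ref{thm normal bundles closed sets}.
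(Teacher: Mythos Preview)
Your proposal correctly identifies the central difficulty---that White's viscosity condition \eqref{white definition} is \emph{not} invariant under a non-conformal change of metric, so pushing $\Gamma$ forward through a chart does not produce a White set in flat $\R^n$---but your resolution of it is not a reduction: you propose to ``invoke the argument of \cite{SantilliBMS} with $(V,\phi_*g)$ as ambient manifold'' and then concede in the last paragraph that this means ``one re-proves \cite{SantilliBMS} intrinsically on $(M,g)$''. That is precisely the content of the theorem, so the proposal is circular at its load-bearing step. The assertion that the Euclidean argument ``uses only the White condition together with the metric-independent structural properties of normal bundles'' is a substantive claim about somebody else's proof that you have not verified; absent that verification, nothing has been established.

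The paper takes a genuinely different route that \emph{is} a reduction: it Nash-embeds $(M,g)$ isometrically into $\R^k$, so that the ambient metric is honestly Euclidean and \cite{SantilliBMS} applies as a black box. Two concrete lemmas then do the work your outline defers. First, one checks that for any open $W\subset\R^k$ with $\ov W\cap M$ compact, $\Gamma\cap W$ is a White $(m,\lambda_W)$-set in flat $W$: the Riemannian and Euclidean Hessians of a test function differ by the second fundamental form of $M\subset\R^k$ paired with the normal component of the Euclidean gradient, and this discrepancy is controlled by a compactness bound (this is exactly the mechanism you said ``can destroy'' the White condition under a chart---isometric embedding tames it). Second, one shows $\widetilde{\NN}_a\ov\Gamma=\NN_a\Gamma+T_a^\perp M$ for the Euclidean normal bundle $\widetilde{\NN}$, so $\dim\widetilde{\NN}_a\ov\Gamma=\dim\NN_a\Gamma+(k-n)$ and the stratum condition transfers. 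The Euclidean theorem then gives $\H^k(\widetilde{\NN}\,\ov\Gamma\llcorner Z)=0$, and one descends to $\H^n(\NN\Gamma\llcorner Z)=0$ by the coarea formula applied to the fibrewise projection $P(a,u)=(a,\pi_a(u))$, whose fibres $T_a^\perp M$ have infinite $\H^{k-n}$-measure.

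Your slicing remark for the $\H^{n-1}$-statement is fine and would work once the $\H^n$-statement is in hand; the paper does not spell this out separately.
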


\begin{proof} By \cite[3.3, 3.8, 3.9]{SantilliBMS}, if $ W \subseteq \R^k $ is open, $ \Gamma $ is a White $ (m,\lambda) $-set in $ W  $, and $\Gamma$ is a countable union of sets with finite $ \H^m $-measure, then
\begin{equation}\label{Euclidean case Z}
Z \subseteq \Gamma, \; \H^m(Z \cap \Gamma^{(m)}) = 0 \quad \implies \quad
\left\{
\begin{split}
&\H^{k}\big(\mathcal{N}(\Gamma) \llcorner Z\big)=0 \,,
\\
&\H^{k-1}\big(\mathcal{N}^{\cred 1}(\Gamma) \llcorner Z\big)=0 \,.
\end{split}
\right .
\end{equation}
We now reduce the proof of \eqref{manifold case Z} to an application of the Euclidean case \eqref{Euclidean case Z}. To this end, by the Nash embedding theorem \cite[Theorem 3]{Nash1956}, we can directly assume that $ M $ is an $n$-dimensional embedded submanifold of the Euclidean space $\R^k$ (for some large $ k $) and $ g $ is the Riemannian metric on $ M $ induced by the Euclidean metric $ \langle \cdot, \cdot \rangle $ of $ \R^k $. 
If $ X \subseteq M $ is relatively closed  in $M$, then we denote by  $ \widetilde{\mathcal{N}}\overline{X}$ the normal bundle of the closure in $ \mathbb{R}^k $ of $ X$, while we keep the symbol $ \mathcal{N}X $ for the normal bundle of $ X $ as a subset of $(M,g) $ (notice $ \mathcal{N}X \subseteq T M $). If $ a \in M $, then $ \pi_a $ and $ \pi^\perp_a $ denote the orthogonal projections of $\R^k$ onto $ T_aM $ and $ T^\perp_aM $ respectively. We now divide the rest of the proof into two claims and one final argument.

\medskip

\noindent {\it Claim one}\,: If $ X \subseteq M $ is relatively closed  in $ M $, then
\begin{equation*}
\widetilde{\mathcal{N}}_a\overline{X}= \mathcal{N}_aX + T^\perp_aM\,,\qquad\forall a\in X\,.
\end{equation*}
Indeed,  let $ u \in \widetilde{\mathcal{N}}_a\overline{X}$ be such that $ \pi_a(u) \neq 0 $ and $ B \cap \overline{X} = \varnothing $, where, for some $r>0$, $ B $ is the open Euclidean ball in $ \R^k $ of radius $ r|u| $ centered at $  a+ ru $. Since $ u \in T^\perp_a(\partial B) $ and $ \langle \pi_a(u), u \rangle > 0 $, we conclude that $ \pi_a(u) \in T_aM \setminus T_a(\pa B) $. Then we choose an open neighborhood $V $ of $ a $ and a continuous function $ \eta : V \cap M \rightarrow \mathbf{S}^{k-1} $ such that $ \eta(a) = \pi_a(u)/|\pi_a(u)| $ and $ \eta(b) \in T_bM$ for every $ b \in V \cap M $. Since $ [b\in  V \cap M \cap \partial B] \mapsto  \dist(\eta(b), T_b(\pa B)) $ is continuous and $ \dist(\eta(a), T_a(\pa B)) > 0 $, there is an open neighborhood $ W \subseteq V $ of $ a $ such that $ \dist(\eta(b), T_b(\partial B)) > 0 $ for every $ b \in W \cap \partial B \cap M $. Hence,
$\dim \big[  T_bM \cap T_b(\pa B)\big] \leq n-1$ and $T_bM + T_b(\pa B) = \R^k$ for every $ b \in W \cap M \cap \partial B $. This means that the submanifolds $ W \cap \partial B $ and $ W \cap M $ are transversal and consequently
\begin{equation}\label{Riemannian Lusin condition 3}
\partial B \cap M \cap W \quad \textrm{is an $n-1 $-dimensional smooth submanifold of $ M $}
\end{equation}
with $T_bM \cap T_b(\pa B)= T_b(M \cap \partial B) $ for $ b \in W \cap M \cap \partial B $; see \cite[pp.\ 29-30]{MR0348781}. Now, observing that
if $ v \in T_a(\partial B \cap M) = T_a(\partial B) \cap T_a(M) $, then
$ \langle \pi_a^\perp(u), v\rangle =0  $ and $\langle \pi_a(u), v \rangle =  \langle u, v \rangle =0 $, we find that
\begin{equation}\label{Riemannian Lusin condition 4}
 \pi_a(u) \in T^\perp_a(\partial B \cap M)\,.
\end{equation}
 For $t$ sufficiently small, since $ \langle \pi_a(u), u \rangle > 0 $, we notice that $ \exp(a, t\pi_a(u)) \in B \cap M  $  and we conclude from \eqref{Riemannian Lusin condition 4} that the open geodesic ball of $(M,g)$ centred at $ \exp(a, t\pi_a(u)) $ and radius $ t|\pi_a(u)| $ is contained in $ B \cap M $. This means, always for $ t $ sufficiently small,  that $ d(\exp(a, t\pi_a(u)), X) = t|\pi_a(u)| $ (thanks to $ B \cap X = \varnothing $) and  $ \pi_a(u) \in \mathcal{N}_a(X) $.

\medskip

We have thus proved  $\widetilde{\mathcal{N}}_a\overline{X} \subseteq \mathcal{N}_aX + T^\perp_aM $ for $ a \in X $. To prove the opposite inclusion, let now $ v \in \mathcal{N}_aX $ with  $ v \neq 0 $. An open geodesic ball $ G $ in $ M $ with sufficiently small radius is such that $ \partial G $ is a smooth $ n-1 $ dimensional submanifold in $ \R^k $, $ G\cap X= \varnothing $, $ a \in \partial G $, and $ v $ is an interior normal of $ G $ at $ a $. Then there exists $ r > 0 $ such that the open Euclidean ball $ B $ in $ \R^k $ of radius $ r $ centered at $ a + r v $ satisfies $ B \cap \partial G = \varnothing $ and $ B \cap G \neq \varnothing $. Choosing $ r $ smaller if necessary, we can also ensure that $ B $ is contained in the tubular neighbourhood of $ M $ where the nearest point projection onto $ M $ is single valued. This means that $ B \cap M $ is a connected subset of $ M \setminus \partial G $; since $ G $ is a connected component of $ M \setminus \partial G $ and $ G \cap B \neq \varnothing $, we infer that $ B \cap M \subseteq G $. The latter inclusion implies that $ B \cap X = \varnothing $. It follows that  $ B \cap \overline{X} = \varnothing $ and $ \mathcal{N}_aX \subseteq \widetilde{\mathcal{N}}_a\overline{X}$. Now the convexity of $ \widetilde{\mathcal{N}}_a\overline{X} $ and the obvious inclusion $ T_a^\perp M \subseteq \widetilde{\mathcal{N}}_a\overline{X} $  allow to conclude
$\mathcal{N}_aX + T^\perp_a M \subseteq \widetilde{\mathcal{N}}_a\overline{X} + \widetilde{\mathcal{N}}_a\overline{X} = \widetilde{\mathcal{N}}_a\overline{X}$ and to complete the proof of claim one.

\medskip

\noindent {\it Claim two}: If $W\subseteq \R^k$ is an open set such that $ \overline{W} \cap M $ is compact, then there is $\lambda_W\ge0$ such that $ \Gamma \cap W$ is a White $(m, \lambda_W)$-set in $W$. Firstly, notice that $ M \cap W $ and $ \Gamma \cap W $ are relatively closed in $ W $. Now, let $b \in \Gamma \cap W$ and $f \in C^2(W)$ such that $ f|_\Gamma$ has a local maximum at $ b$. Denoting by $\overline{\nabla}$ both the Euclidean gradient operator and metric connection, and by $\overline{\Der}^{\,2}$ the Euclidean Hessian operator, we set
$\eta(x) = \pi^\perp_x(\overline{\nabla} f(x))$ for $ x \in W \cap M$, notice that $\pi_x(\overline{\nabla} f(x))=\nabla f(x)$, and compute
\begin{flalign*}
\overline{\Der}^{\,2} f(x)(v,w)	& = \langle \overline{\nabla}_v \overline{\nabla} f (x), w \rangle  = \langle \overline{\nabla}_v \nabla  f(x), w \rangle + \langle \overline{\nabla}_v \eta(x) , w \rangle \\
	& = \Der^2 f(x)(v, w) + \langle A_{\eta(x)}(v), w \rangle   = \Der^2 f(x)(v, w) - Q(x)(v,w)
\end{flalign*}
for every $ x \in M \cap W $ and $ v , w \in T_x(M) $, where $ A_{\eta(x)}: T_x M \rightarrow T_x  $ is the shape operator of $ M $ in the direction $ \eta(x) $, $ Q(x) : T_x M \times T_x M \rightarrow \R$ is the symmetric bilinear form defined as $Q(x)(v,w)= \langle S(v,w), \eta(x)  \rangle $ and $ S $ is the second fundamental form of $ M $. If $ Q(b) (v,v)\geq 0 $ for all $ v \in T_bM$ then we obtain from \cite[Lemma 2.3]{MR1996769} and \cite[Lemma 12.3]{whiteMC}
\begin{flalign*}
	{\rm trace}_m(\overline{\Der}^{\,2} f(b)) & \leq {\rm trace}_m\big[ \overline{\Der}^{\,2}f(b)| T_bM\times T_bM\big] \\
	& = {\rm trace}_m [\Der^2f(b) - Q(b)]\\
	&  \leq	{\rm trace}_m(\Der^2f(b)) \leq \lambda | \nabla f(b)| \leq \lambda | \overline{\nabla} f(b)|.
\end{flalign*}
Now we assume that $ Q(b)(v,v) < 0 $ for some $ v \in T_bM$.  Then we define
\begin{equation*}
	\mu_W =  \inf\{ \langle S(u,u), \nu \rangle: x \in \overline{W} \cap M,\, u \in T_xM, \, \nu \in (T_xM)^\perp, \, |u|=|\nu|=1 \}
\end{equation*}
and we notice that $ -\infty < \mu_W   < 0 $ and
\begin{equation*}
	Q(b)(v,v) \geq \mu_W  | \eta(b)| \langle v, v \rangle \qquad \textrm{for $ v \in T_bM$.}
\end{equation*}
Therefore, by  \cite[Lemma 2.3]{MR1996769} and \cite[Lemma 12.3]{whiteMC}
\begin{flalign*}
{\rm trace}_m(\Der^2f(b)) & = {\rm trace}_m\big[ \overline{\Der}^{\,2} f(b)|( T_b M \times T_b M) + Q(b) \big] \\
& \geq {\rm trace}_m\big[ \big(\overline{\Der}^{\,2}f(b) + \mu_W| \eta(b)| \langle \cdot, \cdot \rangle \big)\big| T_bM\times T_b(M)\big] \\
& \geq {\rm trace}_m\big[\overline{\Der}^{\,2}f(b) + \mu_W| \eta(b)| \langle \cdot, \cdot \rangle\big] \\
& = {\rm trace}_m(\overline{\Der}^{\,2}f(b)) + m\mu_W | \eta(b) |,
\end{flalign*}
and we deduce that
\begin{equation*}
 {\rm trace}_m(\overline{\Der}^{\,2}f(b)) \leq \lambda | \nabla f(b)|- m \mu_W | \eta(b)| \leq (\lambda - m \mu_W)| \overline{\nabla} f(b) |.
\end{equation*}

\medskip

\noindent {\it Conclusion of the proof}:  Let $ Z \subseteq \Gamma $ such that $\H^m(Z \cap\Gamma^{(m)})=0$. Since $ \mathcal{N}_a(\Gamma) \subseteq T_a M $ it follows from claim one that
\begin{equation*}
\dim \widetilde{\mathcal{N}}_a\overline{\Gamma} = \dim \mathcal{N}_a\Gamma + \dim T^\perp_aM \quad \textrm{for $ a \in \Gamma $,}
\end{equation*}
so that
\begin{equation*}
	\H^m(Z \cap \{ a :  \dim \widetilde{\mathcal{N}}_a\overline{\Gamma} = k-m  \}) =0.
\end{equation*}
It follows from claim two and \eqref{Euclidean case Z} that $ \H^{k}(\widetilde{\mathcal{N}}\,\overline{\Gamma}\llcorner Z) =0 $. Let $ P : M \times \R^k \rightarrow T M $ be the smooth map defined by
\begin{equation*}
P(a,u) = (a, \pi_a(u)) \quad \textrm{for $(a,u) \in M \times \R^k $.}
\end{equation*}
Since  $ P(\widetilde{\mathcal{N}}\,\overline{\Gamma}\llcorner \Gamma) = \mathcal{N}\,\Gamma $ by claim one, noting by Theorem \ref{thm normal bundles closed sets} that $\widetilde{\mathcal{N}}\,\overline{\Gamma} \llcorner \Gamma $ is a countably $ k $-rectifiable subset of $ M \times \R^k $ and $ \mathcal{N}\,\Gamma $ is a countably $ n $-rectifiable subset of $T M $. By the coarea formula \cite[3.2.22]{FedererBOOK},
\begin{equation*}
 0 = \int_{ \widetilde{\mathcal{N}}\,\overline{\Gamma}\llcorner Z} \textrm{ap} J_{n}P\, d\H^k = \int_{\mathcal{N}\Gamma \llcorner Z} \H^{k-n}( T^\perp_a M)\, d\H^n_{(a,u)}\,.
\end{equation*}
Since $ \H^{k-n}( T^\perp_a M) = + \infty $ for every $ a \in M $, we have $ \H^n(\mathcal{N}\Gamma\llcorner Z) =0 $.
\end{proof}

\section{Viscous Heintze--Karcher inequalities}\label{section HK} In Theorem \ref{thm hk inq} below, we prove the Heintze--Karcher inequalities of Brendle \cite{brendle}, and address their equality cases, in the viscous setting of White \cite{whiteMC}. Starting from this result, in Theorem \ref{thm rigidity}, we extend Brendle's rigidity theorem to the distributional setting. Throughout the section, $n\ge 3$ and $(M,g)$ is a Riemannian manifold which satisfies at least ${\rm (H0)}$--${\rm (H3)}$. 

We consider $\Sigma\subset M$ such that:

\medskip

\begin{enumerate}
\item[(A1)] $\Sigma$ is a smooth embedded hypersurface in $M$ with $\overline{\Sigma}\subset M^\circ$ and
  \[
  \H^{n-1}(\overline{\Sigma} \setminus \Sigma)=0\,,\qquad \H^{n-1}(\Sigma)<\infty\,.
  \]
\end{enumerate}


\noindent Notice carefully that {\it we do not assume $\Sigma$ to be closed}. Thus, $\ov\Sigma\setminus\Sigma$ may be non-empty and may contain singular points (i.e., $\ov\Sigma$ may fail to be an hypersurface at points in $\ov\Sigma\setminus\Sigma$), and $\Sigma$ may consists of countably many connected components. Our second main assumption is that $\ov{\Sigma}$ is (topologically) a boundary, namely,

\medskip

\begin{enumerate}
\item[(A2)] there is $\Om\subset M$ open such that
\begin{equation}
  \label{Omega open}
\mbox{either $\pa\overline{\Om}=\overline{\Sigma}$ or $\pa\overline{\Om}=\ov\Sigma\cup N_0$,}
\end{equation}
 and $ H_\Sigma = \vec{H}_\Sigma \cdot \nu_\Omega $ is positive on $ \Sigma $.
\end{enumerate}

\medskip

\noindent Under \eqref{Omega open}, assumption ${\rm (A1)}$ implies that $\Om$ is a set of finite perimeter in $M$ thanks to Federer's criterion, see \cite[4.5.12]{FedererBOOK}. In particular, if we denote by $\pa^*\Om$ the reduced boundary of $\Om$, and by $\nu_\Om$ its measure theoretic outer $g$-unit  normal, then we observe that $\Sigma\subset\pa^*\Om$ and $\nu_\Om$ is smooth on $\Sigma$. It thus makes sense to define $$H_\Sigma=\vec{H}_\Sigma\cdot\nu_\Om \quad \textrm{on $\Sigma$}, $$ where $\vec{H}_\Sigma$ is the mean curvature vector of $\Sigma$ in $(M,g)$.

\begin{theorem}\label{thm hk inq} If $n\ge 3$, $(M,g)$ satisfies ${\rm (H0)}$-${\rm (H3)}$, and the pair $(\Sigma,\Om)$ satisfies assumptions ${\rm (A1)}$, ${\rm (A2)}$, and

\medskip

\begin{enumerate}
\item[(A3)] for some $ \l \geq 0 $, $\overline{\Sigma} $ is a White $(n-1, \l)$-set in $ M $,
\end{enumerate}

\medskip

\noindent then, denoting by $r$ the projection of $M=N\times(0,\bar{r})$ over $(0,\bar{r})$, and setting
\[
f=h'\circ r\,,\qquad g^{*} =f^{-2}\, g\,,
\]
the following statements hold:
\begin{enumerate}
	\item[(a)] if $\pa\Om=\ov\Sigma$, then
\begin{equation}
  \label{conclusion a}
  (n-1)\int_{\Sigma}\frac{f}{H_{\Sigma}}\, d\H^{n-1} \geq n \int_{\Omega} f\, d\H^n\,;
\end{equation}
\item[(b)] if $\pa\Omega=\ov\Sigma\cup N_0$, then
\begin{equation}
  \label{conclusion b}
(n-1)\int_{\Sigma}\frac{f}{H_{\Sigma}}\, d\H^{n-1} \geq n \int_{\Omega} f\, d\H^n  + h(0)^n \vol(N, g_N)\,;
\end{equation}
\item[(c)] if either $(a)$ or $(b)$ holds with equality, then $ \Sigma $ is umbilic in $(M,g)$, $M\setminus\Omega$ has positive reach in $(M, g^{*})$, and\footnote{Here, given $\nu=(\tau,a)\in T_{(x,t)}M\equiv T_xN\times\R$, we have set $(g_N)|_{(x,t)}(\nu,\nu)=(g_N)_x(\tau,\tau)$.}
    \begin{equation}
      \label{for H3 star}
      \Big(2\,\frac{h''}h+(n-2)\,\frac{((h')^2-\rho)}{h^2}\Big)'\,g_N(\nu_\Om,\nu_\Om)=0\,,\qquad\mbox{on $\Sigma$}\,.
    \end{equation}
\end{enumerate}
\end{theorem}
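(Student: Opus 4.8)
The plan is to adapt to the present viscous, non-closed setting Brendle's proof of his two Heintze--Karcher-type inequalities \cite{brendle} --- a weighted version of the Montiel--Ros geodesic-flow argument --- and then to read off the equality conditions (E1)--(E3). I would first dispose of the bookkeeping that distinguishes cases (a) and (b). A direct computation gives ${\rm div}_g\big(\frac{h}{n}\,\partial/\partial r\big)=h'\circ r=f$ on $M^\circ$, and $\frac hn\,\partial/\partial r$ is a smooth vector field near $\overline\Om$ (recall $\overline\Sigma\subset\subset M^\circ$), so the distributional divergence theorem on the set of finite perimeter $\Om$ gives
\[
n\int_\Om f\,d\H^n=\int_{\partial^*\Om} h\,\langle \partial/\partial r,\nu_\Om\rangle_g\,d\H^{n-1}\,.
\]
Since $\H^{n-1}(\overline\Sigma\setminus\Sigma)=0$, in case (a) one has $\partial^*\Om=\Sigma$ up to $\H^{n-1}$-null sets, while in case (b) one has $\partial^*\Om=\Sigma\cup N_0$ with $\nu_\Om=-\partial/\partial r$ on $N_0$, so that $\int_{N_0}h\langle \partial/\partial r,\nu_\Om\rangle_g\,d\H^{n-1}=-h(0)^n\,\vol(N,g_N)$; this is where ${\rm (H1)}$ --- which makes $N_0$ totally geodesic, hence minimal --- enters. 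Thus \eqref{conclusion a} and \eqref{conclusion b} both follow once the upper bound $n\int_\Om f\,d\H^n\le(n-1)\int_\Sigma f/H_\Sigma\,d\H^{n-1}$ (in case (a)) and its analogue in case (b) are proved, and the equality cases of these coincide with those in (a), (b).

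To prove the bound I would run the inward normal flow $\Phi(x,t)=\exp\big(x,-t\,\nu_\Om(x)\big)$, $x\in\Sigma$, $t>0$ (in case (b), complemented by the explicit product flow $(y,t)\mapsto(y,t)$ issuing from the horizon $N_0$). Applying Theorem \ref{thm normal bundles closed sets} to the closed set $\overline\Sigma$, together with Remark \ref{remark pos reach} to secure a uniform tube, one introduces the escape domain
\[
A=\big\{(x,t):x\in\Sigma,\ 0<t<t_{\max}(x)\big\}\,,
\]
where $t_{\max}(x)$ is the supremum of times $t$ for which $\Phi(x,[0,t])$ realises the distance to $\overline\Sigma$ and stays in $\Om$, and one checks via conclusions (iv)--(vii) of Theorem \ref{thm normal bundles closed sets} that $\H^n$-almost every point of $\Om$ lies in $\Phi(A)$ (in case (b), in $\Phi(A)$ together with the horizon collar). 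The contributions of the singular set $\overline\Sigma\setminus\Sigma$ are discarded by means of the Lusin-type property Theorem \ref{thm Santilli riem}: this is precisely the point where assumption ${\rm (A3)}$ is used, $\overline\Sigma$ being a White $(n-1,\l)$-set which is a countable union of sets of finite $\H^{n-1}$-measure with $\H^{n-1}(\overline\Sigma\setminus\Sigma)=0$. The area formula then yields $n\int_\Om f\le n\int_A (f\circ\Phi)\,|J\Phi|$, plus the explicit horizon term in case (b).

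The analytic core is a one-dimensional inequality along each flow line. Writing $|J\Phi(x,\cdot)|$ through the Jacobi/Riccati equation for the shape operator of the distance level sets, and combining (i) the Ricci lower bound furnished by ${\rm (H0)}$ via \eqref{Ricci h}; (ii) the radial structure of the weight, namely that the trace-free part of $\Der^2 f$ equals $h\,(h''/h)'\,\big(dr\otimes dr-g/n\big)$; (iii) the monotonicity ${\rm (H3)}$, which enters through exactly the combination $2(h''/h)+(n-2)((h')^2-\rho)/h^2$; and (iv) the arithmetic--geometric mean inequality applied to the principal curvatures of $\Sigma$ --- one gets a monotone function of $t$ whose integration over $(0,t_{\max}(x))$, with the bound $t_{\max}(x)\le (n-1)/H_\Sigma(x)$ coming from $H_\Sigma>0$ as in the Montiel--Ros argument, produces the constants $n$ and $n-1$. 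At the one point where Brendle uses smoothness of $\Sigma$ to run a maximum principle against a comparison hypersurface, assumption ${\rm (A3)}$ supplies the required one-sided bound through White's theory (Theorem \ref{thm white}): this is how the viscous hypothesis substitutes for smoothness. In case (b), the horizon flow is entirely explicit and, together with the identity above, contributes exactly $h(0)^n\,\vol(N,g_N)$.

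For the equality statement (c) I would trace back each inequality used. Equality in the arithmetic--geometric mean step forces the principal curvatures of $\Sigma$ to coincide pointwise, i.e.\ $\Sigma$ is umbilic in $(M,g)$ --- this is (E1). Equality in the step invoking ${\rm (H3)}$ forces, evaluated at $t=0$ so that $\gamma'(0)=-\nu_\Om$, the vanishing along $\Sigma$ of the product of $\big(2(h''/h)+(n-2)((h')^2-\rho)/h^2\big)'$ with $g_N(\nu_\Om,\nu_\Om)$ --- which is exactly \eqref{for H3 star}, condition (E3). Finally, equality forces $t_{\max}(x)=(n-1)/H_\Sigma(x)$ for $\H^{n-1}$-a.e.\ $x\in\Sigma$ and the flow $\Phi$ to be injective up to $\H^n$-null sets; this sharp balance, transplanted to the complete conformal metric $g^{*}=f^{-2}g$ --- in which $\overline\Sigma$ is still a White set by Lemma \ref{lem: mh sets and conformal metric}, since $|\nabla\log f|=|h''/h'|$ is bounded on $\overline\Sigma$, and in which the comparison geometry for the \emph{outward} normal flow is favourable --- yields a uniform lower bound for $\rho_{M\setminus\Om}$ on the $g^{*}$-unit normal bundle of $\overline\Sigma$, whence, by Remark \ref{remark pos reach}, $M\setminus\Om$ has positive reach in $(M,g^{*})$, which is (E2). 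I expect the main obstacle to be precisely (E2): it is vacuous in the smooth closed case, so there is no template, and it requires propagating the sharp Heintze--Karcher balance for the inward flow to a quantitative non-focussing estimate for the outward flow, and identifying the conformal $g^{*}$ (rather than $g$) as the metric in which this holds. A secondary, more technical difficulty is making the covering-and-area-formula reduction rigorous across the singular set $\overline\Sigma\setminus\Sigma$, which is what forces the systematic use of Theorem \ref{thm normal bundles closed sets} and Theorem \ref{thm Santilli riem} in place of classical cut-locus theory.
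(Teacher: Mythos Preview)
Your skeleton is right --- geodesic flow inward from $\Sigma$, Riccati/Jacobi analysis along the flow, area formula, Theorem~\ref{thm Santilli riem} to kill the singular set --- but several load-bearing choices differ from the paper, and two of them are genuine gaps rather than stylistic variants.

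\textbf{The flow is in $g^{*}$, not in $g$.} The paper runs $\Phi(x,t)=\exp^{*}(x,-t\,f(x)\,\nu_\Om(x))$ in the \emph{conformal} metric $g^{*}=f^{-2}g$ from the outset, after first extending $h$ past $\bar r$ so that $(M,g^{*})$ is complete. This is not a cosmetic choice: $(M,g)$ is incomplete (geodesics reach $N_0$ in finite time), and, more importantly, Brendle's monotone quantity is $(f\circ\Phi)/H$ along $g^{*}$-geodesics, with the key differential inequality $\partial_t\big((f\circ\Phi)/H\big)\le -(f\circ\Phi)^2/(n-1)$. You treat $g^{*}$ as something introduced only for (E2); in the paper it governs the entire argument.

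\textbf{The Montiel--Ros bound $t_{\max}\le (n-1)/H_\Sigma$ is not how the inequality is obtained, and is not correct here.} In case~(b) the $g^{*}$-flow time $R_\Sigma(x)$ is typically \emph{infinite} (the horizon is at $g^{*}$-distance $+\infty$), so no finite upper bound on $t_{\max}$ is available or needed. The paper instead integrates by parts: with $J^\Sigma\Phi_t'=-\,(f\circ\Phi)\,H\,J^\Sigma\Phi_t$, one computes $\int_0^{R_\Sigma}\partial_t\big((f\circ\Phi)/H\big)\,J^\Sigma\Phi_t\,dt$ directly, getting
\[
n\int_\Om f+\L(\Sigma)+\big(\text{remainder over }[R_\Sigma^{*},R_\Sigma]\big)\ \le\ (n-1)\int_\Sigma f/H_\Sigma\,,
\]
where $\L(\Sigma)$ is the limit as $t\to R_\Sigma(x)^-$ of $(f\circ\Phi)/H\cdot J^\Sigma\Phi_t$. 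Conclusion~(a) follows by dropping the two nonnegative extra terms. Your ``divergence theorem bookkeeping'' for $n\int_\Om f$ is an identity, not a reduction; the paper does not use it here.

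\textbf{Case (b) is not handled by a horizon flow.} The paper does \emph{not} flow from $N_0$. Instead, the entire horizon contribution $h(0)^n\vol(N)$ is recovered from the boundary term $\L(\Sigma)$ above: one shows $\L(\Sigma)\ge h(0)^n\vol(N)$. This is the longest and most delicate part of the proof. It requires: (i) that the sublevel sets $\Om_t=\{u^{*}_\Sigma>t\}$ have positive reach for $t$ large (via Bangert's Theorem~\ref{thm bangert}, using that $g^{*}$-geodesics from deep inside $\Om$ to $\ov\Sigma$ are nearly vertical); (ii) Kleinjohann's Theorem~\ref{thm kleinjohann} to produce $C^{1,1}$ approximating hypersurfaces $W_{t,\e}$; (iii) a divergence-theorem inequality \eqref{lem: integration by parts eq2} on $W_{t,\e}$; and (iv) a one-sided mean curvature bound on $W_{t,\e}\setminus\Sigma^{*}_{t-\e}$ obtained by touching with small $g^{*}$-balls. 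Your proposed ``explicit product flow from $N_0$'' does not interface with the singular-set machinery and would not yield the sharp constant.

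\textbf{The positive reach condition (E2).} Equality does not force $t_{\max}=(n-1)/H_\Sigma$; it forces $R_\Sigma=R_\Sigma^{*}$ (immersed time equals embedded/cut time) via vanishing of the $[R_\Sigma^{*},R_\Sigma]$-remainder. Then, for $x$ with $R_\Sigma(x)<\infty$, integrating the equality version of the Riccati inequality gives the \emph{lower} bound $R_\Sigma^{*}(x)\ge (n-1)\inf_\Sigma f/(\l\sup_M f^2)>0$. Combined with Theorem~\ref{thm Santilli riem} (to handle $\mathcal{N}^1(\ov\Sigma)\llcorner(\ov\Sigma\setminus\Sigma)$) and Remark~\ref{remark pos reach}, this yields positive reach of $M\setminus\Om$ in $(M,g^{*})$. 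No outward flow is run. Your instinct that (E2) is the hard point is correct, and that Lemma~\ref{lem: mh sets and conformal metric} is used to carry the White-set hypothesis over to $g^{*}$ is also correct; but the mechanism is ``inward-flow equality $\Rightarrow$ uniform lower bound on $\rho^{*}_{M\setminus\Om}$'', not a separate outward comparison.
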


\begin{theorem}
  \label{thm rigidity}  If $n\ge 3$, $(M,g)\in\B_n\cup\B_n^*$, and the pair $(\Sigma,\Om)$ satisfies assumptions ${\rm (A1)}$,
\medskip

  \begin{enumerate}
  \item[(A2)'] there is $\Om\subset M$ open such that either $\pa\overline{\Om}=\overline{\Sigma}$ or $\pa\overline{\Om}=\ov\Sigma\cup N_0$;
\medskip
  \item[(A3)'] there is $H_0\ge0$ such that, for every $Y\in\X(M)$,
  \begin{equation}
  \label{vale div thm}\int_\Sigma\Div^\Sigma Y\,d\H^{n-1}=H_0\,\int_\Sigma \langle\nu_\Om,Y\rangle\,d\H^{n-1}\,;
  \end{equation}
  \end{enumerate}
\medskip

\noindent then, for some $t_0\in(0,\bar{r})$, $\Omega=N\times(0,t_0)$ and $H_0=(n-1)\,h'(t_0)/h(t_0)>0$.
\end{theorem}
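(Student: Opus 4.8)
The plan is to reduce Theorem \ref{thm rigidity} to the viscous Heintze--Karcher inequalities of Theorem \ref{thm hk inq}, to force equality in them using the distributional CMC condition \eqref{vale div thm}, and then to convert the necessary conditions (E1)--(E3) for equality into the rigidity statement. First I would verify the hypotheses of Theorem \ref{thm hk inq}. By \eqref{vale div thm}, the scalar mean curvature $H_\Sigma=\vec H_\Sigma\cdot\nu_\Om$ is constantly equal to $H_0$, and the multiplicity-one varifold carried by $\Sigma$ (whose support is $\ov\Sigma$, since $\H^{n-1}(\ov\Sigma\setminus\Sigma)=0$) has generalized mean curvature of constant length $H_0$, so $\ov\Sigma$ is a White $(n-1,H_0)$-set by Theorem \ref{thm white}. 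It remains to see $H_0>0$: since $\ov\Sigma$ is compactly contained in $M^\circ$, the function $r$ attains its maximum over $\ov\Sigma$ at some $p_0$, and \eqref{white definition} applied with $f=r$ at $p_0$ gives ${\rm trace}_{n-1}(\Der^2 r(p_0))\le H_0$; as $\Der^2 r$ vanishes along $\pa/\pa r$ and equals $(h'/h)\,g$ along the slice directions, the left-hand side equals $(n-2)\,h'(r(p_0))/h(r(p_0))>0$, whence $H_0>0$. Since moreover $(M,g)$ satisfies (H0)--(H3) (because $(M,g)\in\B_n\cup\B_n^*$), all hypotheses (A1), (A2), (A3) of Theorem \ref{thm hk inq} hold, and it yields \eqref{conclusion a} or \eqref{conclusion b}.

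Next I would force equality. With $f=h'\circ r$ as in Theorem \ref{thm hk inq} and $X=h(r)\,\pa/\pa r$ (the gradient of a function of $r$ alone, so that $\nabla_Y X=f\,Y$ for all $Y$, hence $\Div X=n\,f$ and $\Div^\Sigma X=(n-1)\,f$), testing \eqref{vale div thm} with $Y=X$ gives the weighted Minkowski identity $(n-1)\int_\Sigma f\,d\H^{n-1}=H_0\int_\Sigma\langle X,\nu_\Om\rangle\,d\H^{n-1}$. On the other hand the distributional divergence theorem for $\Om$ gives $n\int_\Om f\,d\H^n=\int_{\pa^*\Om}\langle X,\nu_\Om\rangle\,d\H^{n-1}$, whose right-hand side equals $\int_\Sigma\langle X,\nu_\Om\rangle\,d\H^{n-1}$ when $\pa\ov\Om=\ov\Sigma$ and $\int_\Sigma\langle X,\nu_\Om\rangle\,d\H^{n-1}-h(0)^n\vol(N,g_N)$ when $\pa\ov\Om=\ov\Sigma\cup N_0$ (on $N_0$ one has $X=h(0)\,\pa/\pa r$, $\nu_\Om=-\pa/\pa r$ and $\H^{n-1}(N_0)=h(0)^{n-1}\vol(N,g_N)$). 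Since $H_\Sigma\equiv H_0$, combining these identities turns \eqref{conclusion a} (resp.\ \eqref{conclusion b}) into an equality, so Theorem \ref{thm hk inq}(c) gives that $\Sigma$ is umbilic in $(M,g)$, that $M\setminus\Om$ has positive reach in $(M,g^*)$, and that \eqref{for H3 star} holds on $\Sigma$.

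Finally I would extract the rigidity. If $(M,g)\in\B_n^*$ (so that ${\rm (H3)^*}$ holds), then ${\rm (H3)^*}$ and \eqref{for H3 star} force $g_N(\nu_\Om,\nu_\Om)=0$, i.e.\ $\nu_\Om\parallel\pa/\pa r$, $\H^{n-1}$-a.e.\ on $\Sigma$. If instead $(M,g)\in\B_n$ (so that (H4) holds), one follows \cite{brendle}: umbilicality together with (H4) yields the dichotomy that, $\H^{n-1}$-a.e.\ on $\Sigma$, $\nu_\Om$ is parallel or orthogonal to $\pa/\pa r$; the ``orthogonal'' part $\Sigma_\perp$, along which $\pa/\pa r$ is tangent to $\Sigma$, consists of pieces of $\Sigma$ swept out by integral curves of $\pa/\pa r$, and one must show $\H^{n-1}(\Sigma_\perp)=0$. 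Once $\nu_\Om\parallel\pa/\pa r$, hence $|\langle\nu_\Om,\nabla r\rangle|=|\nabla r|$, $\H^{n-1}$-a.e.\ on $\pa^*\Om$, the slicing property of sets of finite perimeter (\cite[Exercise 15.18]{maggiBOOK}) shows that $\Om$ is $\H^n$-equivalent to $N\times E$ with $E\subset(0,\bar r)$ a finite union of intervals; and since the ``lower'' slice $N_a$ ($a>0$) bounding any component $N\times(a,b)$ of $\Om$ would have mean curvature $-(n-1)h'(a)/h(a)<0$ with respect to $\nu_\Om=-\pa/\pa r$, incompatible with $H_\Sigma=H_0>0$, the only possibility is $E=(0,t_0)$, i.e.\ $\Om=N\times(0,t_0)$ and $H_0=(n-1)h'(t_0)/h(t_0)$.

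I expect the decisive step to be the exclusion of $\Sigma_\perp$ in the (H3)-only case. In the smooth, closed, connected setting this follows at once by sliding a slice into $\ov\Sigma$, but in the present generality the first contact may occur at a point of the singular set $\ov\Sigma\setminus\Sigma$ where $\ov\Sigma$ carries a multiplicity-two tangent plane, so that Allard's theorem cannot upgrade contact to coincidence. This is exactly where condition (E2) -- positive reach of $M\setminus\Om$ in $(M,g^*)$ -- is needed: combined with the structure theory of positive-reach sets (Theorems \ref{thm kleinjohann} and \ref{thm bangert}), it restores enough regularity of $\pa\Om$ to run the slice comparison and conclude $\H^{n-1}(\Sigma_\perp)=0$. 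By comparison, the reduction of the first step and the equality-forcing computation of the second are routine.
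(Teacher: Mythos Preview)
Your overall strategy---reduce to Theorem \ref{thm hk inq}, force equality by testing \eqref{vale div thm} with $X=h\,\pa/\pa r$, and then read off rigidity from conclusion (c)---is exactly the paper's. Your verification that $H_0>0$ via the White condition applied to $f=r$ at its maximum over $\ov\Sigma$ is a pleasant alternative to the paper's route, which instead reads $H_0>0$ directly from the Minkowski-type identity $(n-1)\int_\Sigma f=n\,H_0\int_\Om f$ (resp.\ with the horizon term) obtained from that same test. The $\B_n^*$-case and the final slicing argument are handled as in the paper.

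The genuine gap is the $\B_n$-case. You correctly locate the difficulty (first contact of a sliding slice may land in $\ov\Sigma\setminus\Sigma$ where the blow-up has multiplicity two) and the tool (positive reach of $M\setminus\Om$ in $(M,g^*)$), but ``restores enough regularity of $\pa\Om$ to run the slice comparison and conclude $\H^{n-1}(\Sigma_\perp)=0$'' is not an argument, and the paper does \emph{not} proceed by showing $\H^{n-1}(\Sigma_\perp)=0$. Note first that the parallel/orthogonal dichotomy holds \emph{pointwise} on $\Sigma$ (not just $\H^{n-1}$-a.e.), since $\Sigma$ is smooth and umbilicality combined with Codazzi and {\rm (H4)} is a pointwise statement; this makes $\Sigma_\parallel$ and $\Sigma_\perp$ a clopen partition of each connected component of $\Sigma$. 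What the paper then extracts from positive reach is an \emph{approximation property}: using Kleinjohann's theorem, the geodesic-flow maps $\Psi_s:\mathcal{N}^1(M\setminus\Om)\to Z_s=\{\dist_{g^*}(\cdot,M\setminus\Om)=s\}$ are bi-Lipschitz onto $C^{1,1}$-hypersurfaces, and combining this with Theorem \ref{thm Santilli riem} (to kill the fibers over $\ov\Sigma\setminus\Sigma$) one shows that every $(p,\eta)\in\mathcal{N}^1(M\setminus\Om)$ is a limit of pairs $(p_j,-f(p_j)\nu_\Om(p_j))$ with all $p_j$ lying in a \emph{single} connected component $\Sigma'$ of $\Sigma$. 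Applied at the lowest contact point $p_0\in\ov\Sigma\cap N_{t_0}$, where $(p_0,-f(p_0)(\pa/\pa r)|_{p_0})\in\mathcal{N}^1(M\setminus\Om)$, this produces $p_j\in\Sigma'$ with $\nu_\Om(p_j)\to(\pa/\pa r)|_{p_0}$; the orthogonal branch of the dichotomy is then impossible along the sequence, hence (by the clopen partition) on all of $\Sigma'$, forcing $\Sigma'\subset N_{t_0}$ and, by a boundary-point repetition of the same argument, $\Sigma'=N_{t_0}$. A second slide upward finishes: any further slice $N_{t_1}\subset\Sigma$ would carry $\nu_\Om=-\pa/\pa r$ and hence $H_\Sigma<0$. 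The role of (E2), in short, is to manufacture this approximation property linking points of $\mathcal{N}^1(M\setminus\Om)$ back to connected pieces of the regular set $\Sigma$, not to globally annihilate $\Sigma_\perp$.
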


\begin{remark}[On the relation between Theorem \ref{thm hk inq} and Theorem \ref{thm rigidity}]\label{remark on the relation}
  {\rm As detailed in the proof of Theorem \ref{thm rigidity}, by testing the constant mean curvature condition \eqref{vale div thm}  with the vector field $h\,(\pa/\pa r)$, we see that $H_0$ appearing in \eqref{vale div thm} is positive (so that ${\rm (A2)}$' implies ${\rm (A2)}$), and that either \eqref{conclusion a} or \eqref{conclusion b} (depending on whether $\pa\Om=\ov\Sigma$ or $\pa\Om=\ov\Sigma\cup N_0$) {\it holds as an identity}. Moreover, by Theorem \ref{thm white},  ${\rm (A3)}$' implies the validity of ${\rm (A3)}$. Therefore, under the assumptions of Theorem \ref{thm rigidity}, conclusion (c) of Theorem \ref{thm hk inq} holds too. When ${\rm (H3)^*}$ holds, then \eqref{for H3 star} immediately implies that $\nu_\Om(p)$ is parallel to $(\pa/\pa r)|_p$ at every $p\in\Sigma$: this information, combined with standard facts on sets of finite perimeter and with the positivity of $H_\Sigma$, immediately implies that $\Om$ is bounded by a single slice. When, instead, only ${\rm (H3)}$ is assumed, the information in \eqref{for H3 star} may be trivial. In this second case, arguing as in \cite{brendle}, we deduce from umbilicality and the Codazzi equations that $\nu_\Om(p)$ is an eigenvector of $({\rm Ric}_M)|_p$ at every $p\in\Sigma$. Since ${\rm (H4)}$ implies that $(\pa/\pa r)|_p$ is a simple eigenvector of $({\rm Ric}_M)|_p$, we thus find that, at each $p\in\Sigma$, $\nu_\Om(p)$ is either parallel {\it or orthogonal} to $(\pa/\pa r)|_p$. Concluding rigidity from this weaker information using only standard facts on sets of finite perimeter does not seem immediate; however, the fact that $M\setminus\Om$ has positive reach in $(M,g^*)$ can be exploited to quickly reach the desired conclusion.}
\end{remark}

\begin{proof}[Proof of Theorem \ref{thm hk inq}] {\it Preparation of $M$}: The results of Section \ref{section positive reach} and Section \ref{section viscous mc} require the completeness of the ambient manifold. Notice that $(M,g)$ is not complete. A first problem is that geodesics in $(M,g)$ may arrive in finite time to the horizon $ N_0$: this issue is fixed by passing from $g$ to $g^{*}=f^{-2}\,g$. A different issue, however, is the behavior of geodesics near the $\bar{r}$-end of $M$. To fix this second problem we argue as follows. By assumption $\overline{\Sigma}\subset M^\circ$, there is $(a,b)\cc(0,\bar{r})$ such that
\begin{equation}
  \label{ab}
  \ov\Sigma\subset N\times\Big[a,\frac{a+b}2\Big]\,,\qquad \Om\subset N\times \Big(0,\frac{a+b}2\Big)\,.
\end{equation}
Correspondingly we can consider a smooth positive function $h_b:[0, +\infty)\to\R$ so that $h_b=h$ on $[0,b]$, $h_b'>0$ on $(0,\infty)$, and
\[
\sup_{[0,\infty)}h_b'<\infty\,,\qquad \int_b^\infty \frac{dt}{h_b'(t)}=+\infty\,.
\]
We introduce the metrics $g_b=dr \otimes dr+h_b(r)^2g_N$ and $g^{*}_b=f_b^{-2}\,g_b$ on $N\times (0,\infty)$, where $f_b(p)=h_b'(r(p))$. Since ${\rm (H1)}$ and $h_b=h$ on $[0,b]$ imply
\[
\int_{0}^b \frac{dt}{h_b'(t)}=+\infty\,,
\]
we easily see that $g^{*}_b$-geodesic balls centered at points $p\in N\times(0,+\infty)$ are contained in compact slabs of the form $N\times[s,t]$ ($0<s<t<\infty$). In particular, by the Hopf--Rinow theorem,  $(N\times(0,\infty),g^{*}_b)$ is a {\it complete} Riemannian manifold. Notice that $h_b$ satisfies assumption ${\rm (H3)}$ on $(0,b)$, where it coincides with $h$, but, possibly, not on $(0,\infty)$.

\medskip

We have thus reduced to the following situation: $M=N\times(0,\infty)$; the metric $g=dr\otimes dr+h(r)^2\,g_N$ is such that ${\rm (H0)}$ and ${\rm (H1)}$ hold, ${\rm (H2)}$ holds on $(0,\infty)$ (i.e., $h'>0$ on $(0,\infty)$), ${\rm (H3)}$ holds on $(0,b)$, and
\begin{equation}
  \label{f limitata}
  \sup_{M}f<\infty\,;
\end{equation}
the metric $g^*=f^{-2}\,g$ is such that $(M,g^*)$ is a complete Riemannian manifold; and, finally, $\Sigma$ and $\Omega$ satisfy \eqref{ab} in addition to assumptions ${\rm (A1)}$, ${\rm (A2)}$, and ${\rm (A3)}$.

\medskip

\noindent {\it The ``vertical'' vector field $X$}: The shortest path between points $(x,t_1)$ and $(x,t_2)$ in $(M,g^*)$ with $t_1\in(0,t_2)$ is given by $s\in[t_1,t_2]\mapsto (x,s)$ (while $\dist_{g^*}((x,0),(x,t))=+\infty$ for every $x\in N$ and $t>0$). ``Vertical'' segments are thus length minimizing geodesics in $(M,g^*)$, and the vector field $\pa/\pa r$ has a special role in the geometry of $(M,g^*)$. It is also convenient to consider, alongside with $\pa/\pa r$, its rescaled version
\[
X=h\,\frac{\pa}{\pa r}\,.
\]
Simple computations show that
\begin{eqnarray}
  \label{div ddr}
  &&\Div(\pa/\pa r)=(n-1)\,\frac{f}{h\circ r}\,,
  \\
  \label{divergence of pa par tangential}
  &&\Div^\Gamma(\pa/\pa r)=\Big\{(n-1)-\sum_{i=1}^n\,\langle\s_i,\pa_n\rangle_g^2\Big\}\,\frac{f}{h\circ r}\,,\qquad\mbox{on $\Gamma$}\,,
  \\
  \label{Div Gamma X}
  &&\Div\,X=n\,f\quad\mbox{on $M$}\,,\qquad \Div^\Gamma X=(n-1)\,f\quad\mbox{on $\Gamma$}\,,
\end{eqnarray}
whenever $\Gamma$ is a $C^1$-hypersurface and $\{\s_i\}_{i=1}^{n-1}$ denotes a $g$-orthonormal basis of $T_p\Gamma$ for some $p\in\Gamma$. An immediate consequence of \eqref{divergence of pa par tangential} is that if $\Gamma$ is a {\it closed} $C^{1,1}$-hypersurface in $M^\circ$ with $\H^{n-1}(\Gamma)<\infty$ and with mean curvature vector $\vec{H}_\Gamma$ in $(M,g)$, and if $h''\circ r\ge0$ on $\Gamma$, then
\begin{equation}\label{lem: integration by parts eq2}
(n-1)\,\H^{n-1}(\Gamma)\ge \int_{\Gamma} \frac{\langle \vec{H}_\Gamma ,X\rangle_g}f\, d\H^{n-1}\,.
\end{equation}
Indeed, by \eqref{Div Gamma X} we find
\[
\Div^\Gamma\Big(\frac{X}f\Big)=(n-1)-\frac{\langle \nabla^\Gamma f,X\rangle_g}{f^2}\,,
\]
where $\nabla f=[(h''/h)\circ r]\,X$. Denoting by $X^\Gamma$ the projection of $X$ along $T\Gamma$, we find $\langle X,\nabla^\Gamma f\rangle_g=[(h''/h')\circ r]\,|X^\Gamma|_g^2$. Hence,  by applying the divergence theorem to $X/f$ on $\Gamma$ and by using $h''\circ r\ge0$ on $\Gamma$, we find \eqref{lem: integration by parts eq2}.


\medskip

\noindent  {\it ``Immersed'' geodesic flow from $\Sigma$}: The vector field
\[
\nu^*_\Om=f\,\nu_\Om
\]
is a smooth $g^*$-unit normal to $\Sigma$, pointing out of $\Om$. Now, denoting by $\exp^* $ the exponential map in the complete Riemannian manifold $(M,g^{*})$, we can define a smooth map $\Phi:\Sigma\times(0,\infty)\to M$ by setting
\[
\Phi(x,t)=\Phi_t(x)=\exp^*(x, -t\,\nu^*_\Om(x))\,,\qquad x\in\Sigma\,,t>0\,.
\]
In this way, denoting by $J^\Sigma\Phi_t$ the tangential Jacobian\footnote{Here and in the following, $\H^k$ and $J$ always denote Hausdorff measures and Jacobians computed with respect to the metric $g$.} of $\Phi_t$ along $\Sigma$,
\begin{equation}
  \label{Phi basics}
\Phi(x,0)=x\,,\quad \frac{\pa\Phi}{\pa t}(x,0)=-f(x)\,\nu_\Om(x)\,,\quad J^\Sigma\Phi_0(x)=1\,,\qquad\forall x\in\Sigma\,.
\end{equation}
By \eqref{ab} and \eqref{Phi basics}, we have that $\Phi_t(x)\in N\times(0,b)$ with $J^\Sigma\Phi_t(x)>0$ for every $t$ small enough. We can thus define a lower semicontinuous, positive function $R_\Sigma:\Sigma\to (0,\infty]$ by setting
\begin{eqnarray*}
R_\Sigma(x)\!\!&=&\!\!\min\Big\{\inf\big\{t>0: J^\Sigma\Phi_t(x)=0\big\},
\\
&&\hspace{2cm}\inf\big\{t>0:\Phi_t(x)\not\in N\times(0,b)\big\}\Big\}\,,\qquad x\in\Sigma\,,
\end{eqnarray*}
so to have
\begin{eqnarray}  \label{effetto Rsigma su Asigma}
  &&\Phi_t(x)\in N\times(0,b)\quad\mbox{and}\quad J^\Sigma\Phi_t(x)>0\,,
  \\\nonumber
  &&\forall (x,t)\in A_\Sigma:=\Big\{(x,t):x\in\Sigma\,,t\in(0,R_\Sigma(x))\Big\}\,.
\end{eqnarray}
By the Gauss lemma (see, e.g., \cite[pag.\ 60]{MR1390760}), for every $(x,t)\in A_\Sigma$,
\begin{equation}
  \label{speed of pa phi pa t}
  \frac{\pa\Phi}{\pa t}(x,t)\in\Big(d\Phi_t(x)[T_x\Sigma]\Big)^\perp\,,\qquad \Big|\frac{\pa\Phi}{\pa t}(x,t)\Big|_g=f(\Phi_t(x))\,.
\end{equation}
In particular, the tangential Jacobian $J^{A_\Sigma}\Phi$ of $\Phi$ along $A_\Sigma$ is related to $J^\Sigma\Phi_t$ by the identity
\begin{equation}\label{prop: hk and n,h sets: eq2}
	(J^{A_\Sigma}\Phi)(x,t) = f(\Phi_t(x))\,J^\Sigma\Phi_t(x)\,,\qquad\forall (x,t) \in A_\Sigma\,.
\end{equation}
We now notice that, for every $t\in(0,\infty)$, $\{R_\Sigma>t\}$ is an open subset of $\Sigma$, and
\[
\Gamma_t=\Phi_t\big(\{R_\Sigma>t\}\big)\,,\qquad t>0\,,
\]
is a smooth {\it immersed} hypersurface in $M$. Indeed, by construction, for every $(x,t)\in A_\Sigma$ there is an open neighborhood $W$ of $x$ in $\Sigma$ such that $(\Phi_t)|_W$ is a smooth embedding. Correspondingly, we denote\footnote{Notice carefully that $\Phi$ may not be injective on the whole $A_\Sigma$, therefore we will not be able to consider $H$ and ${\rm II}$ as functions on $\Phi(A_\Sigma)\subset M$.} by $H(x,t)$ and ${\rm II}(x,t)$ the scalar mean curvature and the second fundamental form (in the metric $g$) of the smooth hypersurface $\Phi_t(W)$ at the point $\Phi_t(x)$ and with respect to the normal 
	\begin{equation}\label{def: nu immersed}
		\nu(x,t)=-\frac{1}{f(\Phi_t(x))}\frac{\partial \Phi}{\partial t}(x,t),
	\end{equation}
(see \eqref{prop: hk and n,h sets: eq2}). Notice that $ \nu(x,0) = \nu_\Omega(x) $ for $ x \in \Sigma $.
 Since, by \eqref{effetto Rsigma su Asigma}, $\Phi_t(x)\in N\times(0,b)$ for every $t\in(0,R_\Sigma(x))$, and since $h$ satisfies ${\rm (H3)}$ on $(0,b)$, the pointwise calculations in \cite[Proposition 3.2]{brendle} (which are based on ${\rm (H0)}$ and ${\rm (H3)}$ and the Riccati equation) can be repeated {\it verbatim} to show that, everywhere on $A_\Sigma$,
\begin{eqnarray}
\label{screaming trees}
&&\frac{\partial}{\partial t}\Big( \frac{H}{f\circ\Phi}\Big) \ge|{\rm II}|^2
 \geq \frac{H^2}{n-1}\,,
\\
\label{prop: hk and n,h sets: eq5}
&&\frac{\partial}{\partial t}\Big( \frac{f\circ\Phi}{H}\Big) \leq - |{\rm II}|^2\,\frac{(f\circ \Phi)^2}{H^2}\,
 \leq -\frac{(f \circ \Phi)^2}{n-1}\,.
\end{eqnarray}
Since $H(\cdot,0)>0$ on $\Sigma$ by assumption ${\rm (A2)}$, we see that \eqref{screaming trees} implies
\begin{equation}
  \label{i nearly}
  \mbox{$H$ is positive on $A_\Sigma$}\,.
\end{equation}
Moreover, we have that
\begin{equation}\label{prop: hk and n,h sets: eq6}
\frac{\partial}{\partial t}\,J^\Sigma\Phi_t(x)= -[(f\circ\Phi)\,H](x,t) \, J^\Sigma \Phi_t(x)\,,\qquad\forall (x,t)\in A_\Sigma\,.
\end{equation}
Indeed, given $(x,t)\in A_\Sigma$ and $W$ as above, if $W'$ is an arbitrary open subset of $W\subset\Sigma$ then, by the area formula,
\begin{eqnarray*}
  \frac{d}{ds}\Big|_{s=t}\H^{n-1}(\Phi_s(W'))=  \frac{d}{ds}\Big|_{s=t}\,\int_{W'}J^\Sigma\Phi_s\,d\H^{n-1}
  =
  \int_{W'}\frac{\pa}{\pa t}J^\Sigma\Phi_t\,d\H^{n-1}
\end{eqnarray*}
while, by the formula for the first variation of the area and by \eqref{speed of pa phi pa t}
\begin{eqnarray*}
\frac{d}{ds}\Big|_{s=t}\H^{n-1}(\Phi_s(W'))\!\!\!&=&\!\!\!-\int_{W'} [(f\circ\Phi)\,H](y,t)\,J^\Sigma\Phi_t(y)\,d\H^{n-1}_y\,,
\end{eqnarray*}
so that \eqref{prop: hk and n,h sets: eq6} follows by arbitrariness of $W'$. We finally notice that
\begin{eqnarray}
  \nonumber
  &&\mbox{if $x\in\Sigma$, $R_\Sigma(x)<\infty$, and $J^\Sigma\Phi_t(x)\to 0$ as $t\to R_\Sigma(x)^-$}\,,
  \\
  \label{infinito}
  &&\mbox{then $H(x,t)\to+\infty$ as $t\to R_\Sigma(x)^-$}\,.
\end{eqnarray}
Indeed, \eqref{prop: hk and n,h sets: eq6} gives
\[
\log\big(J^\Sigma\Phi_t(x)\big)=-\int_0^t\,[(f\circ\Phi)\,H](x,s)\,ds\,,\qquad\forall t\in(0,R_\Sigma(x))\,.
\]

\medskip

\noindent {\it A refinement of \eqref{prop: hk and n,h sets: eq5}}: We claim that, everywhere on $A_\Sigma$,
\begin{eqnarray}\label{T careful}
  -\Big\{\frac{\pa}{\pa t}\Big( \frac{f\circ\Phi}{H}\Big)+\frac{(f\circ\Phi)^2}{n-1}\Big\}
  &=&(f\circ\Phi)^2\,\Big\{\frac{ |{\rm II}|^2}{H^2}-\frac1{n-1}\Big\}
  \\\nonumber
  &&+\frac{f\circ\Phi}{H^2}\,h'\,\big\{{\rm Ric}_N-\rho\,(n-2)\,g_N\big\}(\nu,\nu)
  \\\nonumber
  &&+\frac{f\circ\Phi}{H^2}\,\frac{h^3}2\,\big(\mathcal{M}[h]\big)'\,g_N(\nu,\nu)\,,
\end{eqnarray}
where, by definition,
\begin{equation}
  \label{def of M h}
  \mathcal{M}[h]=2\,\frac{h''}h-(n-2)\,\frac{\rho-(h')^2}{h^2}\,.
\end{equation}
Indeed, setting $T=(\Delta f)\,g-\Der^2\,f+f\,{\rm Ric}_M$, by \cite[Proposition 2.1]{brendle} we have that
\begin{eqnarray}\label{T1}
T= h'\,\big\{{\rm Ric}_N-\rho\,(n-2)\,g_N\big\}+\frac{h^3}2\,\big(\mathcal{M}[h]\big)'\,g_N\,,
\end{eqnarray}
while the computations in \cite[Proposition 3.2]{brendle} give
\begin{equation}
  \label{T2}
  \frac{\pa}{\pa t}\Big( \frac{f\circ\Phi}{H}\Big)=
-(f\circ\Phi)^2\,\frac{ |{\rm II}|^2}{H^2}-\frac{(f\circ\Phi)}{H^2}\,T(\nu,\nu)\,.
\end{equation}
The combination of \eqref{T1} and \eqref{T2} leads immediately to \eqref{T careful}.

\medskip

\noindent  {\it Geodesic flow from $\Sigma$}: We now consider the (embedded) geodesic flow of $\Sigma$ (which is the main structure used in Brendle's argument), i.e. we relate $\Phi$ to the distance function from $\Sigma$ in $(M,g^*)$. Let us define, for the sake of brevity, $u^*_\Sigma:\overline{\Omega}\to[0,\infty)$ and $R_\Sigma^*:\Sigma\to(0,\infty)$ by setting
\begin{eqnarray*}
u^*_\Sigma(p)& =& \dist_{g^{*}}(p,\Sigma)\,,\qquad \qquad\hspace{2.2cm} p\in\ov{\Om}\,,
\\
R^*_\Sigma(x)&=&\rho^*_{\ov\Sigma}(x,-\nu^*_\Om(x))
\\
&=&\sup\big\{s>0:s=u_\Sigma^*(\Phi_s(x))\big\}\,,  \qquad x\in\Sigma\,,
\end{eqnarray*}
(where, for $\Gamma$ closed in $M$, $\rho^*_\Gamma$ is defined as in \eqref{def rho sigma} with respect to the metric $g^*$)  and then consider the sets
\begin{eqnarray*}
\Omega_t&=& \big\{p\in\Om:u_{\Sigma}^*(p)>t\big\}\,,
\\
\Sigma_t&=&\big\{p \in \Omega: u^*_\Sigma(p)=t\big\}=M^\circ\cap\pa\Om_t\,,
\\
\Sigma^\ast_t&=&\Phi_t(\{R^*_\Sigma>t\})\subset\Sigma_t\,,
\\
A_\Sigma^*&=&\big\{(x,t) : x \in \Sigma,\, t\in(0,R^*_\Sigma(x))\big\}\,.
\end{eqnarray*}
(Notice that if $\pa\Om=\ov\Sigma$, then $\pa\Om_t=\Sigma_t$; if, otherwise, $\pa\Om=\ov\Sigma\cup N_0$, then $\pa\Om_t=\Sigma_t\cup N_0$.)  It is easily seen that $R_\Sigma^*$ is continuous on $\Sigma$, so that $\{R^*_\Sigma>t\}$ is an open subset of $\Sigma$ for every $t>0$, and $A_\Sigma^*$ is open. The fact that $\Phi$ is a diffeomorphism on $A_\Sigma^*$ with values in $\Om$ is standard (since $\Sigma$ is smooth), so that
\begin{equation}
  \label{RSigma larger than RSigmastar}
  \Phi(A_\Sigma^*)\subset\Omega\,,\qquad
  R_\Sigma(x)\ge R_\Sigma^*(x)\quad\forall x\in\Sigma\,,\qquad A_\Sigma^*\subset A_\Sigma\,,
\end{equation}
and $\Phi_t$ is a smooth embedding of $\{R_\Sigma^*>t\}$ into $M$. In particular, for each $t>0$, $\Sigma^\ast_t$ is a (possibly empty, embedded) hypersurface in $M$. (Notice that $\Gamma_t$ is, in general, {\it larger} than $\Sigma_t^*$, immersed but not embedded, and {\it unrelated} to  $u_\Sigma^*$.) The vector field  (see \eqref{def: nu immersed})
\begin{equation}
  \label{def of nu}
  \nu_t(y)=\nu(\Phi_t^{-1}(y),t)\,,\qquad y\in\Sigma_t^*\,,
\end{equation}
is a unit normal vector field to $\Sigma_t^*$ in $(M,g)$ with the property that
\begin{equation}
  \label{def of Hstart}
  H(\Phi_t^{-1}(y),t)=H_{\Sigma_t^*}(y)\qquad \mbox{$y \in  \Sigma_t^*$}\,,
\end{equation}
where $H_{\Sigma_t^*}$ is the scalar mean curvature of $\Sigma_t^*$ with respect to $\nu_t$. We now prove three important geometric properties of the family $\{\Sigma_t^*\}_t$, namely, we show that
\begin{eqnarray}
\label{prop: hk and n,h sets: eq1}
&&\H^n(\Omega \setminus \Phi(A_\Sigma^*))=0\,,
\\
\label{prop: hk and n,h sets: eq13}
&&\H^{n-1}(\Sigma_t \setminus \Sigma^\ast_t) =0\,,\qquad\mbox{for $\mathcal{L}^1$-a.e.\ $t>0$}\,,
\end{eqnarray}
and that, when $\pa\Om=\Sigma\cup N_0$,
\begin{eqnarray}
\label{lsigma 3}
\H^{n-1}(\Sigma_t^*)\ge h(0)^{n-1}\,\vol(N)\,,\qquad\mbox{for $\L^1$-a.e. $t>0$}\,.
\end{eqnarray}
We begin noticing that \eqref{prop: hk and n,h sets: eq1} is immediate to prove when $\Sigma$ is a \emph{closed} smooth  hypersurface, since, in that case, we trivially see that  $\Omega \setminus \Phi(A_\Sigma^*) \subseteq {\rm Cut}^\ast(\Sigma)$, where ${\rm Cut}^*$ denotes the cut-locus in $(M,g^*)$, and $ \mathcal{H}^n({\rm Cut}^\ast(\Sigma))=0 $ by Theorem \ref{thm normal bundles closed sets}-(v). In our non-smooth setting, we begin noticing that,  by assumption ${\rm (A3)}$ and Lemma \ref{lem: mh sets and conformal metric}, there is $\l^*>0$ such that $\overline{\Sigma}$ is a White $(n-1,\lambda^*)$-subset of $(M,g^{*})$. Now, by construction,
\[
\Omega \setminus  \Phi(A_\Sigma^*) \subseteq {\rm Cut}^*(\overline{\Sigma}) \cup \exp\big[ \mathcal{N}(\overline{\Sigma}) \llcorner(\overline{\Sigma} \setminus \Sigma)  \big]\,,
\]
Since $\overline{\Sigma} $ is a White $(n-1, \lambda^*)$-set in $(M,g^*)$ and $ \H^{n-1}(\overline{\Sigma} \setminus \Sigma) =0 $ we conclude from Theorem \ref{thm Santilli riem} that
\[
\H^n(\mathcal{N}(\overline{\Sigma}) \llcorner(\overline{\Sigma} \setminus \Sigma)) =0 \quad \textrm{and} \quad \H^n\big(\exp^*\big[\mathcal{N}(\overline{\Sigma}) \llcorner(\overline{\Sigma} \setminus \Sigma)	\big]\big) =0\,.
\]
Since $ \H^n({\rm Cut}^\ast(\overline{\Sigma})) =0 $ , we conclude the proof of \eqref{prop: hk and n,h sets: eq1}. By \eqref{prop: hk and n,h sets: eq1} and the coarea formula we have
\[
0=\int_{\Omega \setminus \Phi(A_\Sigma^*)}|\nabla u^*_\Sigma|_g\, d\H^n
=\int_{0}^\infty \H^{n-1}(\Sigma_s \setminus \Phi(A_\Sigma^*))\, ds\,,
\]
which immediately implies \eqref{prop: hk and n,h sets: eq13} by $\Sigma_s^*=\Sigma_s\cap\Phi(A_\Sigma^*)$. Finally, to prove \eqref{lsigma 3}, we notice that, since $u_\Sigma^*$ is a Lipschitz function, $\Om_t=\{u_\Sigma^*>t\}$ is a set of finite perimeter in $M$ for $\L^1$-a.e. $t>0$. Since $\pa\Om=\Sigma\cup N_0$ implies $\pa\Om_t= N_0\cup\Sigma_t$, by \eqref{prop: hk and n,h sets: eq13} the reduced boundary $\pa^*\Om_t$ of $\Om_t$ is $\H^{n-1}$-equivalent to the union of $N_0$ and $\Sigma_t^*$, with measure theoretic outer $g$-unit normal $\nu_{\Om_t}$ such that $\nu_{\Om_t}=-\pa/\pa r$ on $N_0$ and  $\nu_{\Om_t}=\nu_t$ on $\Sigma_t^*$; in particular,
\[
\int_{\Om_t}\Div(\pa/\pa r) = \int_{\Sigma_t^*}\langle \nu_t,\pa/\pa r\rangle_g\,d\H^{n-1}-\H^{n-1}(N_0)\,.
\]
By \eqref{div ddr}, and since both $\pa/\pa r$ and $\nu_t$ have unit length in $g$, we deduce $\H^{n-1}(\Sigma_t^*)\ge \H^{n-1}(N_0)$, which is \eqref{lsigma 3}.

\medskip

\noindent {\it A general Heintze--Karcher inequality}: We now prove a general Heintze--Karcher inequality, see \eqref{general hk} below, which implies both \eqref{conclusion a} and \eqref{conclusion b}, and which allows one to deduce the crucial positive reach information contained in conclusion (c) when equality holds in either  \eqref{conclusion a} or \eqref{conclusion b}. We start noticing that, by \eqref{prop: hk and n,h sets: eq5} and \eqref{prop: hk and n,h sets: eq6},
\begin{eqnarray}\label{dai}
  &&\int_\Sigma\,d\H^{n-1}_x\int_0^{R_\Sigma(x)}(f\circ \Phi)^2(x,t)\,J^\Sigma\Phi_t(x)\,dt
  \\\nonumber
&&  \hspace{1cm}\le
  -(n-1)\,\int_\Sigma\,d\H^{n-1}_x\int_0^{R_\Sigma(x)}\frac{\pa}{\pa t}\Big(\frac{f\circ \Phi}{H}\Big)\,(x,t)\,J^\Sigma\Phi_t(x)\,dt
  \\\nonumber
  &&  \hspace{1cm}=-(n-1)
  \int_\Sigma\,d\H^{n-1}_x\int_0^{R_\Sigma(x)}(f\circ \Phi)^2\,(x,t)\,J^\Sigma\Phi_t(x)\,dt
  \\\nonumber
  &&\hspace{2cm}-(n-1)\,\int_\Sigma\Big[\Big(\frac{f\circ \Phi}{H}\Big)\,(x,t)\,J^\Sigma\Phi_t(x)\Big]\Big|_{t=0}^{t=R_\Sigma(x)}d\H^{n-1}_x\,.
\end{eqnarray}
By \eqref{prop: hk and n,h sets: eq2}, the area formula, and \eqref{prop: hk and n,h sets: eq1} we obtain
\begin{eqnarray*}
&&\int_\Sigma\,d\H^{n-1}_x\int_0^{R_\Sigma^*(x)}(f\circ \Phi)^2\,(x,t)\,J^\Sigma\Phi_t(x)\,dt
\\
&&=\int_{A_\Sigma^*}(f\circ \Phi)\,J^{A_\Sigma}\Phi\,d\H^n=\int_{\Phi(A_\Sigma^*)}f\,d\H^n=\int_\Om f\,d\H^n\,,
\end{eqnarray*}
while by \eqref{Phi basics},
\[
\Big[\int_\Sigma\Big(\frac{f\circ \Phi}{H}\Big)\,(x,t)\,J^\Sigma\Phi_t(x)d\H^{n-1}_x\Big]\Big|_{t=0}=\int_\Sigma\,\frac{f}{H_\Sigma}\,d\H^{n-1}\,,
\]
so that \eqref{dai} gives
\begin{eqnarray}\nonumber
  &&n\,\int_\Om f\,d\H^n+\int_\Sigma\,d\H^{n-1}_x\int_{R_\Sigma^*(x)}^{R_\Sigma(x)}(f\circ \Phi)^2\,(x,t)\,J^\Sigma\Phi_t(x)\,dt
  +\L(\Sigma)
  \\\label{general hk}
  &&\le (n-1)\,\int_\Sigma\frac{f}{H_\Sigma}\,d\H^{n-1}\,,
\end{eqnarray}
where
\[
\L(\Sigma)=(n-1)\,\int_{\Sigma}\,\Big[\lim_{t\to R_\Sigma(x)^-}\,\Big(\frac{f\circ \Phi}{H}\Big)\,(x,t)\,J^\Sigma\Phi_t(x)\Big]\,d\H^{n-1}_x\,.
\]
Notice that, for every $x\in\Sigma$, $t\mapsto J^\Sigma\Phi_t(x)\,[(f\circ\Phi)/H](x,t)$ is decreasing on $(0,R_\Sigma(x))$ thanks to \eqref{prop: hk and n,h sets: eq5} and \eqref{prop: hk and n,h sets: eq6}: in particular, the integrand in the definition of $\L(\Sigma)$ is a well-defined non-negative function, \eqref{conclusion a} follows immediately from \eqref{general hk}, and conclusion (a) is proved.

\medskip

\noindent {\it Conditional proof of conclusions (b) and (c)}: We now prove conclusions (b) and (c) assuming the validity of the following inequality:
\begin{equation}
  \label{claim}
  \L(\Sigma)\ge h(0)^n\,{\rm vol}(N)\,,\qquad\mbox{when $\pa\Om=\Sigma\cup N_0$}\,.
\end{equation}
Indeed, if \eqref{claim} holds, then \eqref{general hk} definitely implies \eqref{conclusion b}, that is is conclusion (b). Moreover, if equality holds in either \eqref{conclusion a} or \eqref{conclusion b}, then inequality \eqref{dai} (appearing in the derivation of \eqref{general hk}) must hold as an identity. Therefore, since \eqref{prop: hk and n,h sets: eq5} was used in proving \eqref{dai}, we find that if equality holds in either \eqref{conclusion a} or \eqref{conclusion b}, then
\begin{eqnarray}
  \label{condition one}
  &&
  \frac{\partial}{\partial t}\Big( \frac{f\circ\Phi}{H}\Big)=-\frac{(f \circ \Phi)^2}{n-1}\,,\qquad\mbox{on $A_\Sigma$}\,,
  \\
  \label{condition two}
  &&\int_\Sigma\,d\H^{n-1}_x\int_{R_\Sigma^*(x)}^{R_\Sigma(x)}(f\circ \Phi)^2\,(x,t)\,J^\Sigma\Phi_t(x)\,dt=0\,.
\end{eqnarray}
By \eqref{T careful}, we see that \eqref{condition one} gives
\begin{equation}\label{umbilicality condition}
	|{\rm II}|^2=\frac{H^2}{n-1}\,,\qquad\mbox{on $A_\Sigma$}\,,
\end{equation}
which, tested with $t=0$, implies that $\Sigma$ is umbilical in $(M,g)$ (the first part of conclusion (c)), as well as
\[
\frac{h^3}2\,\big(\mathcal{M}[h]\big)'\,g_N(\nu,\nu)=0\,,\qquad\mbox{on $A_\Sigma$}\,,
\]
which, tested with $t=0$, implies the validity of \eqref{for H3 star}. A more delicate argument is needed to deduce from \eqref{condition two} that $M\setminus\Om$ has positive reach in $(M,g^*)$ (the second part of conclusion (c)), and it goes as follows: Since $f>0$ on $M$ (by assumption (${\rm H2}$)) and $J^\Sigma\Phi_t(x)>0$ for every $t\in(0,R_\Sigma(x))$ (by definition of $R_\Sigma(x)$), \eqref{condition two} implies that
\begin{equation*}
  R_\Sigma(x)= R_\Sigma^*(x)\,,\qquad\mbox{for $\H^{n-1}$-a.e. $x\in\Sigma$}\,,
\end{equation*}
 whence we infer from the lower semicontinuity of $ R_\Sigma $ and the continuity of $ R^\ast_\Sigma $ that
\begin{equation}\label{Rs are equal}
	R_\Sigma(x)= R_\Sigma^*(x)\,,\qquad\mbox{for every $x\in\Sigma$}\,.
\end{equation}
Let $x \in \Sigma $ be such that  $R_\Sigma(x)<\infty$. Should it be that
\begin{equation}
  \label{cannot}
  \dist_{g^*}(\Phi_t(x),N_0\cup N_b)\to 0^+\qquad\mbox{as $t\to R_\Sigma(x)^-$}\,,
\end{equation}
the facts that $R_\Sigma(x)<\infty$ and $\dist_{g^*}(N_0,\Sigma)\ge\dist_{g^*}(N_0,N_a)=+\infty$ would imply
\[
\dist_{g^*}(\Phi_t(x),N_b)\to 0^+\qquad\mbox{as $t\to R_\Sigma(x)^-$}\,,
\]
and thus, by \eqref{Rs are equal}, $\Phi(A_\Sigma^*)\subset\Om$, and \eqref{ab}, that
\[
\lim_{t\to R_\Sigma(x)^-}\Phi_t(x)\in\ov\Om\cap N_b=\varnothing\,,
\]
a contradiction. Since \eqref{cannot} cannot occur, $R_\Sigma(x)<\infty$ must then imply $J^\Sigma\Phi_t(x)\to 0^+$ as $t\to R_\Sigma(x)^-$: in this case, \eqref{infinito} holds, and we can integrate \eqref{condition one} over $t\in(0,R_\Sigma(x))$ and take advantage of \eqref{Phi basics} so to find
\[
-\frac{f(x)}{H_\Sigma(x)}=-\int_0^{R_\Sigma(x)}\frac{f(\Phi_t(x))^2}{n-1}\,dt\ge-\frac{R_\Sigma(x)}{n-1}\,\sup_M\,f^2\,,
\]
where $\sup_M f^2<\infty$ thanks to \eqref{f limitata}. Since $0<H_\Sigma\le\l$ on $\Sigma$ by assumption ${\rm (A3)}$, we conclude (using again \eqref{ab}) that
\[
R_\Sigma(x)\ge \frac{(n-1)}{\l\,\sup_M f^2}\,\inf_\Sigma f\ge \frac{(n-1)}{\l\,\sup_M f^2}\,\inf_{[a,b]}h'\,.
\]
We have thus proved the existence of a positive constant $c(\Sigma)$ such that
\begin{equation}
  \label{f}
  R_\Sigma^*(x)=R_\Sigma(x)\ge c(\Sigma)\,,\qquad\forall x\in\Sigma\,.
\end{equation}
By assumption ${\rm (A2)}$, $\pa(M\setminus\Om)=\ov{\Sigma}$, therefore
\begin{eqnarray}
\label{fib1}
\mathcal{N}^1(M\setminus\Om)\!\!&\subset&\!\!\mathcal{N}^1\ov\Sigma\,,
\\
\label{fib2}
\mathcal{N}^1(M\setminus\Om)\cap\big(\mathcal{N}^1\ov\Sigma\,\,\llcorner\Sigma\big)\!\!&=&\!\!\big\{(x,-\nu_\Om^*(x)):x\in\Sigma\big\}\,.
\end{eqnarray}
At the same time, by applying Theorem \ref{thm Santilli riem} with $\Gamma=\ov{\Sigma}$ (which is admissible by assumption ${\rm (A3)}$), $m=n-1$, and $Z=\ov\Sigma\setminus\Sigma$ (which is admissible since, by assumption ${\rm (A1)}$, $\H^{n-1}(\ov\Sigma\setminus\S)=0$), we find that
\begin{equation}
\label{fib3}
\H^{n-1}\big(\mathcal{N}^1\ov\Sigma\,\,\llcorner\,\big(\ov\Sigma\setminus\Sigma\big)\big)=0\,.
\end{equation}
By combining \eqref{fib1}, \eqref{fib2} and \eqref{fib3} we thus find
\[
(x,\eta)=(x,-\nu_\Om^*(x))\qquad\mbox{for $\H^{n-1}$-a.e. $(x,\eta)\in \mathcal{N}^1(M\setminus\Om)$}\,,
\]
so that the function, for $\H^{n-1}$-a.e. $(x,\eta)\in \mathcal{N}^1(M\setminus\Om)$ we have
\begin{eqnarray*}
  \rho^*_{M\setminus\Om}(x,\eta)&=&\rho^*_{M\setminus\Om}(x,-\nu_\Om^*(x))
  \\
  &=&\sup\big\{s>0:s=\dist_{g^*}\big(\exp^*\big(x,-s\,\nu_\Om^*(x)\big),M\setminus\Om\big)\big\}\,.
\end{eqnarray*}
Since $\dist_{g^*}(p,M\setminus\Om)=\dist_{g^*}(p,\ov\Sigma)$ for $p\in\Om$ and since
\[
\exp^*(x,-s\,\nu_\Om^*(x))\in\Om\,,\qquad\forall s\in(0,\rho^*_{M\setminus\Om}(x,-\nu_\Om^*(x)))\,,
\]
by \eqref{f} we conclude that, for $\H^{n-1}$-a.e. $(x,\eta)\in \mathcal{N}^1(M\setminus\Om)$,
\[
\rho^*_{M\setminus\Om}(x,\eta)=\rho^*_{\ov\Sigma}(x,-\nu_\Om^*(x))=R_\Sigma^*(x)\ge c(\Sigma)>0\,.
\]
We can thus apply Theorem \ref{thm normal bundles closed sets}-(vii) and Remark \ref{remark pos reach} to finally conclude that $M\setminus\Omega $ is a set of  positive reach in $(M,g^*)$. We are thus left to prove \eqref{claim} to complete the proof of conclusions (b) and (c).

\medskip

\noindent {\it Proof of \eqref{claim}}: We start by setting, for every $(x,t)\in\Sigma\times[0,\infty)$,
\[
g(x,t)=\left\{
\begin{split}
  &\Big(\frac{f\circ \Phi}{H}\Big)\,(x,t)\,J^\Sigma\Phi_t(x)\,,\qquad x\in\Sigma\,, t\in(0,R_\Sigma(x))\,,
  \\
  &0\,,\qquad\hspace{3.56cm} x\in\Sigma\,, t\ge R_\Sigma(x)\,.
\end{split}
\right .
\]
By \eqref{prop: hk and n,h sets: eq5}, \eqref{prop: hk and n,h sets: eq6}, and the non-negativity of $f$, $H$ and $J^\Sigma\Phi_t$, we see that $t\in[0,\infty)\mapsto g(x,t)$ is decreasing, and thus provides a non-negative extension of $J^\Sigma\Phi\,(f\circ\Phi)/H$ from $A_\Sigma$ to the whole $\Sigma\times[0,\infty)$ such that
\[
\L(\Sigma)=(n-1)\,\lim_{t\to\infty}\int_\Sigma\,g(x,t)\,d\H^{n-1}_x\,.
\]
Now, since $\{R_\Sigma^*>t\}\subset \Sigma$ and since $t<R_\Sigma^*(x)$ implies $t<R_\Sigma(x)$ we have
\[
\int_\Sigma\,g(x,t)\,d\H^{n-1}_x\ge\int_{\{R_\Sigma^*>t\}}\Big(\frac{f\circ \Phi}{H}\Big)\,J^\Sigma\Phi_t\,d\H^{n-1}_x
=\int_{\Sigma_t^*}\frac{f}{H_{\Sigma_t^*}}\,d\H^{n-1}\,,
\]
where we have used \eqref{def of Hstart}. Now, by the Cauchy--Schwartz inequality,
\begin{eqnarray*}
\int_{\Sigma_t^*}\frac{f}{H_{\Sigma_t^*}}\,d\H^{n-1}\ge \H^{n-1}(\Sigma_t^*)^2\,\Big(\int_{\Sigma_t^*}\frac{H_{\Sigma_t^*}}{f}\,d\H^{n-1}\Big)^{-1}
\end{eqnarray*}
so that, in summary,
\begin{equation}
  \label{lsigma 1}
  \L(\Sigma)\ge(n-1)\,\limsup_{t\to\infty}\H^{n-1}(\Sigma_t^*)^2\,\Big(\int_{\Sigma_t^*}\frac{H_{\Sigma_t^*}}{f}\,d\H^{n-1}\Big)^{-1}\,.
\end{equation}
{\it Claim}: for every $\l\in(0,1)$ there is $t_0=t_0(\l)$ so that, if $t>t_0$, then
\begin{eqnarray}
\label{lsigma 4}
 \inf_{\Sigma_t^*}\langle X,\nu_t\rangle_g  \!\!&\ge&\!\!\l\,h(0)\,,
  \\
  \label{lsigma 5}
  (n-1)\,\H^{n-1}(\Sigma_t^*)\!\!&\ge&\!\!
    \int_{\Sigma_t^*}H_{\Sigma_t^*}\,\frac{\langle X, \nu_t\rangle_g}f\,d\H^{n-1}
  \,,
\end{eqnarray}
with $\nu_t$ as in \eqref{def of nu}. Notice that by combining \eqref{lsigma 1}, \eqref{lsigma 4} and \eqref{lsigma 5} we obtain indeed that
 \begin{eqnarray*}
 \L(\Sigma)&\ge&
 (n-1)\,\limsup_{t\to\infty}\inf_{\Sigma_t^*}\langle X,\nu_t\rangle_g \,\H^{n-1}(\Sigma_t^*)^2\,\Big(\int_{\Sigma_t^*}\frac{H_{\Sigma_t^*}}{f}\,\langle X,\nu_t\rangle_g\,\Big)^{-1}
\\
&\ge& \l\,h(0)\,\limsup_{t\to\infty}\,\H^{n-1}(\Sigma_t^*)\ge\l h(0)^n\,\vol(N)\,,
\end{eqnarray*}
where in the last step we have used \eqref{lsigma 3}. By letting $\l\to 1^-$ we deduce \eqref{claim}. We are thus left to prove \eqref{lsigma 4} and \eqref{lsigma 5} to complete the proof of conclusions (b) and (c).

\medskip

\noindent {\it Proof of \eqref{lsigma 4}}: Recalling that $\Om_t=\{x\in\Om:\dist_{g^*}(x,\ov\Sigma)>t\}$, we now consider
\begin{eqnarray*}
u_{\Om_t}^*=\dist_{g^{*}}(\cdot,\Om_t)\,,\qquad t>0\,,
\end{eqnarray*}
and notice that, with the same argument used in the proof of \cite[Lemma 3.6]{brendle}, for every $\l\in(0,1)$ there is $t_0=t_0(\l)$ such that, if $p\in\Om_{t_0}$ and $\a$ is a $g^{*}$-unit speed geodesic with $\a(0)=p$ and $\a(u^*_\Sigma(p))\in\ov\Sigma$, then $|\a'(0)|_g=f(p)$ and
\begin{equation}
  \label{brendle 36}
  \langle \alpha'(0), \pa/\pa r\rangle_{g} \geq\l\,f(p)\,.
\end{equation}
(In more geometric terms, every $g^{*}$-unit speed geodesic that ends up in $\ov{\Sigma}$ after originating in $\Om$ from a point at a sufficiently large distance from $\ov\Sigma$, must have an initial velocity with ``almost vertical'' direction).  If $t>t_0$ we can apply this statement to any $p\in\Sigma_t^*\subset\Sigma_t\subset\Om_{t_0}$ and with $\a'(0)=f(p)\,\nu_t(p)$, so to find
 \[
\langle\nu_t(p),\pa/\pa r\rangle_g\ge\l \qquad\forall p\in\Sigma_t^*\,,
\]
from which \eqref{lsigma 4} follows since $X=h\,\pa/\pa r$ and $h\ge h(0)$ on $M$.

\medskip

\noindent {\it As an additional consequence of \eqref{brendle 36}}, setting from now on $t_0=t_0(\l)|_{\l=1/2}$, we also notice that, for every $t \geq t_0$,
 \begin{equation}
  \label{has positive reach}
  \mbox{$\Omega_t$ has  positive reach in $(M,g^{*})$}\,.
\end{equation}
To prove this, thanks to Theorem \ref{thm bangert}, we only need to show that $u_\Sigma^*$ has no critical points in $\Om_{t_0}$ ($t_0=t_0(\l)|_{\l=1/2}$), that is, that there cannot be $p\in\Om_{t_0}$ such that for every $v\in T_pM$ with $|v|_{g^*}=1$ one can find a $g^*$-unit speed geodesic $\a$ with $\a(0)=p$ and $\a(u_\Sigma^*(p))\in\ov\Sigma$ such that $\langle v,\a'(0)\rangle_{g}\ge 0$; and, indeed, any such $\a$ would satisfy $\langle\a'(0),\pa/\pa r\rangle_{g^*}\ge f(p)/2$ by \eqref{brendle 36}, so that, taking $v=-f(p)\,(\pa/\pa r)$, we would obtain a contradiction.

\medskip

\noindent {\it Proof of \eqref{lsigma 5}}: The proof is based on an approximation argument. Precisely, for $t>0$ and $\e<\min\{1,t\}$, we consider the sets
\[
W_{t,\e}=\big\{x \in M:u_{\Om_t}^*(x)=\e\big\}\,,
\]
so that
\begin{equation}
  \label{Wteps inclusions}
  \Sigma_{t-\e}^*\subset W_{t,\e}\subset\Om_{t-1}\setminus\ov{\Om}_t\,,\qquad\forall t>0\,,\e<\min\{1,t\}\,,
\end{equation}
and reduce the proof of \eqref{lsigma 5} to showing that
\begin{eqnarray}
\label{pino}
&&\lim_{\e\to 0^+}\H^{n-1}( W_{t,\e})=\H^{n-1}(\Sigma_t)\,,
\\
\label{prop: hk and n,h sets: eq12}
&&\lim_{\e\to 0^+}\int_{\Sigma^\ast_{t-\e}}\!\!\! H_{\Sigma^\ast_{t-\e}} \frac{\langle X, \nu_{t-\e}\rangle_g}f \, d\H^{n-1}= \int_{\Sigma^\ast_t}H_{\Sigma^\ast_t}\, \frac{\langle X, \nu_t\rangle_g}f\, d\H^{n-1}\,,\hspace{1cm}
\\
\label{ice}
&&\liminf_{\e\to 0^+}\int_{ W_{t,\e}\setminus\Sigma^*_{t-\e}}\,\frac{\langle \vec{H}_{ W_{t,\e}},X\rangle_g}f\,d\H^{n-1}
\ge 0\,.
\end{eqnarray}
Indeed, thanks to ${\rm (H1)}$, there is $r_1>0$ such that $ h''>0$ on $(0,2\,r_1)$. Up to further increase the value of $t_0$, we can ensure $\Omega_{t-1}\subseteq N\times[0,r_1]$ for every $t\ge t_0$. In particular, by $W_{t,\e}\subset\Om_{t-1}$, we conclude that $h''>0$ on $ W_{t,\e}$. Since $ W_{t,\e}$ is a closed $C^{1,1}$-hypersurface in $M^\circ$ with $\H^{n-1}( W_{t,\e})<\infty$, by \eqref{lem: integration by parts eq2} we find
\begin{eqnarray}\label{quante}
&&(n-1)\,\H^{n-1}( W_{t,\e})\ge\int_{ W_{t,\e}}\,\frac{\langle \vec{H}_{ W_{t,\e}},X\rangle_g}f \,d\H^{n-1}
\\\nonumber
&&=\int_{ W_{t,\e}\setminus\Sigma^*_{t-\e}}\,\frac{\langle \vec{H}_{ W_{t,\e}},X\rangle_g}f\,d\H^{n-1}
+\int_{\Sigma^*_{t-\e}}\,H_{\Sigma^*_{t-\e}}\,\frac{\langle \nu_{t-\e},X\rangle_g}f\,d\H^{n-1}
\end{eqnarray}
where we have used $\Sigma^*_{t-\e}\subset W_{t,\e}$ to deduce
\[
\vec{H}_{ W_{t,\e}}=\vec{H}_{\Sigma^*_{t-\e}}=H_{\Sigma^*_{t-\e}}\,\nu_{t-\e}\quad\mbox{on $\Sigma^*_{t-\e}$}\,.
\]
By using \eqref{pino}, \eqref{prop: hk and n,h sets: eq12}, \eqref{ice}, and \eqref{quante} we deduce immediately \eqref{lsigma 5}.
We now turn to the proof of \eqref{pino}, \eqref{prop: hk and n,h sets: eq12}, and \eqref{ice}. We shall use the following preliminary step:

\medskip

\noindent {\it Proof of \eqref{pino} and \eqref{prop: hk and n,h sets: eq12}}: Thanks to \eqref{has positive reach} we can apply Theorem \ref{thm kleinjohann} with $A=\ov{\Om}_t$, $t\ge t_0$ and find $\e_t\in(0,\min\{1,t\})$ such that, if $\e\in(0,\e_t)$, then $ W_{t,\e}=\{u_{\Om_t}^*=\e\}$ is a compact $C^{1,1}$-hypersurface contained in $\Unp(\Om_t)$ and the $g^*$-geodesic flow $\Psi^t:\mathcal{N}^1(\ov{\Om}_t)\times[0,\e_t)\to \{0\le u_{\Om_t}^*<\e_t\}$ defined by
\[
\Psi^{t}(p,\eta,\e)=\Psi_\e^t(p,\eta)=\exp^*(p,\e\,\eta)\,,\qquad (p,\eta,\e)\in\mathcal{N}^1(\ov{\Om}_t)\times[0,\e_t)\,,
\]
is such that $\Psi^t_\e$ is a locally bi-Lipschitz map $\mathcal{N}^1(\ov{\Om}_t)$ to $ W_{t,\e}$ when $\e>0$, with $\Psi^t_0(p,\eta)=p$. In particular,
\[
\Theta^t:=\mathcal{N}^1(\ov{\Om}_t)
\]
is a $(n-1)$-dimensional compact Lipschitz submanifold of $TM$. On noticing that  $\mathcal{N}^1_p(\ov{\Omega}_t)=\varnothing$ for every $p\in\Om_t\cup N_0$, since $\pa\Om_t=\Sigma_t\cup N_0$ we find that
\begin{equation}
  \label{utile 2}
  \mathcal{N}^1(\ov{\Omega}_t)=  \mathcal{N}^1(\ov{\Omega}_t)\,\llcorner\,\Sigma_t\,,\qquad\mbox{i.e. $(p,\eta)\in\Theta^t$ implies $p\in\Sigma_t$}\,.
\end{equation}
Moreover, by Theorem \ref{thm normal bundles closed sets}-(iii) (applied to $\ov{\Om}_t$, see, in particular, \eqref{utile}),
\begin{equation}
  \label{utile 3}
  \H^0(\mathcal{N}^1_p(\ov{\Omega}_t))=1\,,\qquad\mbox{for $\H^{n-1}$-a.e. $p\in\Sigma_t$}\,.
\end{equation}
Finally, the smoothness of $(p,\eta)\in TM\mapsto\exp^*(p,\e\,\eta)\in M$ ensures that
\begin{equation}
  \label{utile 4}
  \lim_{\e\to 0^+}J^{\Theta^t}\Psi^t_\e(p,\eta)=J^{\Theta^t}\Psi^t_0(p,\eta)\,,
\end{equation}
for $\H^{n-1}$-a.e. $(p,\eta)\in\Theta^t$ (i.e., at every $(p,\eta)$ such that $T_{(p,\eta)}\Theta^t$ exists). Since $\Psi^t_\e$ and its differential are locally bounded in $(M,g)$, we can apply the area formula (to $\Psi^t_\e$), the dominated convergence theorem (in combination with \eqref{utile 4}), and the area formula again (to $\Psi^t_0$) to find
\begin{eqnarray*}
&&\lim_{\e\to 0^+}\H^{n-1}(W_{t,\e})=\lim_{\e\to 0^+}\int_{\Theta^t}J^{\Theta^t}\Psi^t_\e\,d\H^{n-1}=\int_{\Theta^t}J^{\Theta^t}\Psi^t_0\,d\H^{n-1}
\\
&&=\int_{\Psi^t_0(\Theta^t)}\!\!\H^0\big((\Psi^t_0)^{-1}(x)\big)\,d\H^{n-1}_x
=\int_{\Psi^t_0(\Theta^t)}\!\!\H^0\big(\mathcal{N}_x^1(\ov{\Om}_t)\big)\,d\H^{n-1}_x\,,
\end{eqnarray*}
which, combined with \eqref{utile 3} and $\Psi^t_0(\Theta^t)\subset\Sigma_t$ (i.e. \eqref{utile 2}), gives
\begin{equation}\label{prop: hk and n,h sets: eq10}
	\lim_{\e\to 0^+}\H^{n-1}( W_{t,\e})=
\int_{\Psi^t_0(\Theta^t)}\!\!\H^0\big(\mathcal{N}_x^1(\ov{\Om}_t)\big)\,d\H^{n-1}_x\le\H^{n-1}(\Sigma_t)\,.
\end{equation}
Now, to prove \eqref{pino}, let us consider the diffeomorphisms $\phi_\e : \Sigma^\ast_t \rightarrow \Sigma^\ast_{t-\e}$ defined by
\[
\phi_\e(x)=\Phi_{t-\e}\big(\Phi_t^{-1}(x)\big)=\exp^*(x,\e\,f(x)\,\nu_t(x))\,,\qquad\forall x\in\Sigma_t^*\,.
\]
By the area formula,
\[
\H^{n-1}(\phi_\e(\Sigma^\ast_t)) =\int_{\Sigma^\ast_t} J^{\Sigma^\ast_t}\phi_\e\, d\H^{n-1}\,.
\]
Since $\phi_\e\to{\rm Id}$ and $J^{\Sigma^\ast_t}\phi_{\e}\to1$ on $\Sigma^\ast_t$ as $\e\to 0$, we conclude by dominated convergence that
\begin{equation}\label{prop: hk and n,h sets: eq11}
	\lim_{\e\to 0^+}\H^{n-1}(\Sigma^\ast_{t-\e})= \H^{n-1}(\Sigma^\ast_t).
\end{equation}
Thus, by combining \eqref{prop: hk and n,h sets: eq10} and \eqref{prop: hk and n,h sets: eq11} with the facts that $\Sigma^\ast_{t-\e}\subset W_{t,\e}$ and $\Sigma_t^*$ is $\H^{n-1}$-equivalent to $\Sigma_t$ (recall \eqref{prop: hk and n,h sets: eq13}), we deduce \eqref{pino} and
\begin{equation}
\label{prop: hk and n,h sets: eq16}
\lim_{\e\to 0^+}\H^{n-1}( W_{t,\e}\setminus \Sigma^\ast_{t-\e})=0\,.
\end{equation}
To prove \eqref{prop: hk and n,h sets: eq12} we notice that, since $(\Phi^{-1})|_{\Sigma_s^*}=(\Phi_s)^{-1}$ on $\Sigma_s^*$, we have
\[
(\Phi^{-1})|_{\Sigma^*_{t-\e}}\circ\phi_\e=(\Phi_t)^{-1}=(\Phi^{-1})|_{\Sigma^*_t}\qquad\mbox{on $\Sigma_t^*$}\,,
\]
so that,  the area formula gives
\begin{eqnarray*}
\int_{\Sigma^\ast_{t-\e}}\!\!\! H_{\Sigma_{t-\e}^*} \frac{\langle X, \nu_{t-\e}\rangle_g}{f} \, d\H^{n-1}\!\!\!&=&\!\!\!
\int_{\Sigma^\ast_{t-\e}}\!\!\! (H\circ (\Phi_{t-\e})^{-1})\, \frac{\langle X, \nu_\Om\circ(\Phi_{t-\e})^{-1}\rangle_g}f \,
\\
\!\!\!&=&\!\!\!\int_{\Sigma^\ast_{t}} H_{\Sigma^*_t}\, \frac{\langle X\circ\phi_\e, \nu_t\rangle_g}{f\circ\phi_\e} \,J\phi_\e\, d\H^{n-1}\,,
\end{eqnarray*}
for every $\e>0$. Then \eqref{prop: hk and n,h sets: eq12} follows by dominated convergence.

\medskip

\noindent {\it Proof of \eqref{ice}}: We finally prove \eqref{ice}, that is,
\begin{equation}
  \label{ice2}
  \liminf_{\e\to 0^+}\int_{ W_{t,\e}\setminus\Sigma^*_{t-\e}}\,\frac{\langle \vec{H}_{ W_{t,\e}},X\rangle_g}f\,d\H^{n-1}
\ge 0\,.
\end{equation}
Setting, for the sake of brevity,
\begin{eqnarray}\label{def of nut eps}
\nu_{t,\e}=\frac{\nabla u^*_{\Om_t}}{|\nabla u^*_{\Om_t}|_{g}}\,,\qquad H_{t,\e}=\vec{H}_{ W_{t,\e}}\cdot\nu_{t,\e}\,,\qquad\mbox{on $W_{t,\e}$}\,,
\end{eqnarray}
we notice that $\nu_{t,\e}$ defines a Lipschitz continuous $g$-unit normal to $ W_{t,\e}$, and that $H_{t,\e}$ is the scalar mean curvature (as usual, with respect to $g$) of $ W_{t,\e}$ relative to $\nu_t$. With this notation,  and thanks to \eqref{prop: hk and n,h sets: eq16}, \eqref{ice2} follows by showing that
\begin{eqnarray}
  \label{form of ice 1}
  &&\inf_{ W_{t,\e}}\langle \nu_{t,\e},\pa/\pa r\rangle_g\ge0\,,
  \\
  \label{form of ice 2}
  &&H_{t,\e}(x)\ge-\Lambda(t)\,,\qquad\mbox{for $\H^{n-1}$-a.e. $x\in W_{t,\e}$}\,,
\end{eqnarray}
for a (positive) constant $\Lambda(t)$ independent of $\e$.

\medskip

\noindent {\it Proof of \eqref{form of ice 1}}: We start by proving the existence of a positive constant $c$ such that
\begin{equation}\label{prop: hk and n,h sets: eq14}
\big\langle \eta, X(p)\big \rangle_{g^{*}} \geq c\,,\quad\forall (p,\eta)\in\Theta_t\,,t>t_0\,.
\end{equation}
Since $\langle\eta,v\rangle_{g^{*}} \leq 0$ whenever $\eta \in \mathcal{N}_p(\ov{\Omega}_t)$ and $v \in T_p(\Omega_t)$ (where $T_p(\Om_t)$ is tangent cone to $\Om_t$ at $p$), and recalling \eqref{utile 3}, the validity of \eqref{prop: hk and n,h sets: eq14} (for an explicitly computable constant $\s_0$) can be easily deduced by showing that, for every $t>t_0$ and $p \in \Sigma_t\subset\Om_{t_0}$,
\begin{equation}
  \label{imp}
  \left\{\begin{split}
  &v\in T_pM\,,\quad |v|_{g} = 1\,,
  \\
  &\big\langle v, -(\pa/\pa r)|_p\big\rangle_{g}>\frac{15}{16}\,,
  \end{split}
  \right .
  \qquad \Rightarrow\qquad v\in T_p(\Om_t)\,.
\end{equation}
(In geometric terms: leaving $p\in\Sigma_t=(\pa\Om_t)\cap M^\circ$ along a sufficiently ``vertical and downward'' direction, we stay inside $\Om_t$.) The proof of \eqref{imp} follows closely that of \cite[Lemma 3.7]{brendle}, but since the two statements are not immediate to compare, we include the details. We need to consider an arbitrary $g$-unit speed curve $\gamma:[0,1]\to M$ with $\g(0)=p$ and $\g'(0)=v$, and prove the existence of $\s\in(0,1)$ such that $u^*_\Sigma(\g(s))>t$ for every $s\in(0,\s)$. To begin with, we can definitely chose $\s$ so that
\[
\g(s)\in\Om_{t_0}\,,\qquad \big\langle \gamma'(s), -(\pa/\pa r)|_{\gamma(s)}\big\rangle_{g} \geq \frac{15}{16}\,,\quad\forall s\in[0,\s]\,.
\]
For every $s\in(0,\s)$, the fact that $\g(s)\in\Om_{t_0}$ implies $u^*_\Sigma(\gamma(s))>0$, and thus the existence of a $g^*$-unit speed geodesic $\a_s$ with $\a_s(0)=\gamma(s)$ and $\a_s(u^*_\Sigma(\gamma(s)))\in\overline{\Sigma}$. By \eqref{brendle 36} (with $\l=1/2$), since $w(s)=\alpha_s'(0)/f(\g(s))$ is a $g$-unit vector, we find
\begin{eqnarray*}
\frac{1}2\!\!&\le&\!\!\big\langle w(s), (\pa/\pa r)|_{\gamma(s)}\big\rangle_{g}
=
\big\langle w(s),\g'(s)+ (\pa/\pa r)|_{\gamma(s)}\big\rangle_{g}-\langle w(s), \g'(s)\rangle_{g}
\\
\!\!&\le&\!\!\big|\g'(s)+(\pa/\pa r)|_{\gamma(s)}\big|_{g}-\langle w(s), \g'(s)\rangle_{g}
\\
\!\!&=&\!\!\sqrt{2-2\,\langle \gamma'(s),-(\pa/\pa r)|_{\gamma(s)}\rangle_{g}}-\langle w(s), \g'(s)\rangle_{g}
\\
\!\!&\le&\!\! \frac14-\langle w(s), \g'(s)\rangle_{g}\,,\qquad\mbox{i.e.}\quad \langle -w(s), \g'(s)\rangle_{g}\ge \frac14\quad\forall s\in(0,\s)\,.
\end{eqnarray*}
Using the facts that $u^*_\Sigma(p)=t$ and that, for a.e. $s\in(0,\s)$, $u_\Sigma^*\circ\g$ is differentiable at $\g(s)$ with
\[
(u_\Sigma^*\circ\g)'(s)=\langle\nabla^*u_\Sigma^*(\g(s)),\g'(s)\rangle_{g^*}=\langle-\a'_s(0),\g'(s)\rangle_{g^*}
=\frac{\langle\g'(s), -w(s)\rangle_{g}}{f(\g(s))}\,,
\]
we thus find that, for every $s'\in(0,\s)$,
\[
u^*_\Sigma(\gamma(s'))
=t+ \int_{0}^{s'} \frac{ds}{4\,f(\g(s))} >t
\]
as desired. This proves \eqref{imp}, and thus, as explained, \eqref{prop: hk and n,h sets: eq14}.

\medskip

We are now ready to deduce \eqref{form of ice 1} from \eqref{prop: hk and n,h sets: eq14}. First, by \eqref{prop: hk and n,h sets: eq14} there are positive constants $c$ and $\de$ such that
\begin{equation}
  \label{prop: hk and n,h sets: eq14 bis}
  \langle\eta',\pa/\pa r\rangle_g\ge c\,,
\end{equation}
whenever $(q,\eta')$ lies in the $\de$-neighborhood $\mathcal{A}_\de(\Theta^t)$ of $\Theta^t=\mathcal{N}^1(\ov{\Om}_t)$ in $TM$. Second, by smoothness of  $(p,\eta,\e)\mapsto\exp^*(p,\e\,\eta)$, we can find $\e'<\e_t$ depending on $\de$ such that
\[
\Big\{\Big(\Psi^t(p,\eta,\e),\frac{\pa\Psi^t}{\pa\e}(p,\eta,\e)\Big):(p,\eta)\in\Theta^t\,,0<\e<\e'\Big\}\subset\mathcal{A}_\de(\Theta^t)\,.
\]
Third, since $\Psi^t_\e$ is a bijection from $\Theta^t$ to $W_{t,\e}$, we see that for each $x\in W_{t,\e}$ there is a unique pair $(p,\eta)$ with $p\in\Sigma_t$ and  $\eta\in\mathcal{N}^1_p(\ov{\Om}_t)$ such that $x=\Psi^t_\e(p,\eta)$. Thus, taking also into account that (in general) $|\nabla^* v|_{g^*}=f\,|\nabla v|_g$ (by $\nabla^*v=f^2\,\nabla v$), that $|\nabla^*u^*_{\Om_t}|_{g^*}=1$ (wherever $u^*_{\Om_t}$ is differentiable), and that $u_{\Om_t}^*$ is differentiable along $\e\mapsto\Psi^t_\e(p,\eta)$ with $g^*$-gradient given by $\pa\Psi^t_\e/\pa\e$, we conclude that
\begin{eqnarray*}
\langle\nu_{t,\e}(x),\pa/\pa r\rangle_g&=&
\Big\langle\frac{\nabla u_{\Om_t}^*(\Psi^t_\e(p,\eta))}{|\nabla u_{\Om_t}^*(\Psi^t_\e(p,\eta))|_g},\pa/\pa r\Big\rangle_g
\\
&=&f\big(\Psi^t_\e(p,\eta)\big)\,\langle \nabla u_{\Om_t}^*(\Psi^t_\e(p,\eta)),\pa/\pa r\rangle_g
\\
&=&f\big(\Psi^t_\e(p,\eta)\big)\,\big\langle \frac{\pa\Psi^t}{\pa\e}(p,\eta,\e),\pa/\pa r\big\rangle_g\ge c\,\inf_{[a,b]}h'>0\,,
\end{eqnarray*}
provided $\e<\e'$. This proves  \eqref{form of ice 1}.

\medskip

\noindent {\it Proof of \eqref{form of ice 2}}: Let us consider the open set
\[
A_{t,\e}=\Om_t\cup\big\{p\in\Om:u_{\Om_t}^\ast(p)<\e\big\}\,,
\]
that has $C^{1,1}$-boundary and $g$-unit outer unit normal given by $\nu_{A_{t,\e}}=\nu_{t,\e}$ on $\pa A_{t,\e}=W_{t,\e}$, with $\nu_{t,\e}$ as in \eqref{def of nut eps}. Given $x\in W_{t,\e}$ such that $\nu_{t,\e}$ is differentiable at $x$ (this holds at $\H^{n-1}$-a.e. $x\in W_{t,\e}$), let  $B^*_x$ denote the $g^*$-geodesic ball centered at $\exp^*(x,(\mu_t/2)\,\nabla^*u_{\Om_t}^*(x))$, where $\mu_t<\min\{1,\e_t\}$ is smaller than the injectivity radius of $M$ in $\ov{\Om}_{t-2}\setminus\Om_{t+1}$, so to entail that $B^*_x$ has smooth boundary. Since, by construction,
\[
x\in\pa A_{t,\e}\cap\pa B^*_x\,,\qquad A_{t,\e}\subset M\setminus B^*_x\,,
\]
by the weak maximum principle for $C^{1,1}$-vs-smooth hypersurfaces we find
\[
H_{W_{t,\e}}(x)=\vec{H}_{A_{t,\e}}(x)\cdot\nu_{t,\e}(x)\ge \vec{H}_{M\setminus B^*_x}\cdot\nu_{t,\e}(x)=-H_{B^*_x}(x)\,,
\]
where $H_{B^*_x}(x)$ denotes the scalar mean curvature of $B^*_x$ in $(M,g)$, computed with respect to the outer $g$-unit normal to $B^*_x$ at $x$  (i.e., with respect to $-\nu_{t,\e}(x)$). Now, if $H^*_{B^*_x}$ denotes the scalar mean curvature of $B^*_x$ in $(M,g^*)$ computed with respect to the outer $g^*$-unit normal to $B^*_x$ at $x$, then by \eqref{lem: mh sets and conformal metric: eq1} we find
\begin{eqnarray*}
  H_{B^*_x}(x)\!\!&\le&\!\!H^*_{B^*_x}(x)+3\,(n-1)\,|\nabla\log(f)(x)|_g
  \\
  \!\!&\le&\!\!(n-1)\,\sqrt{\k_t}\,{\rm coth}(\sqrt{\k_t}\,\mu_t/2)+3\,(n-1)\,|\nabla\log(f)(x)|_g\,,
\end{eqnarray*}
where in the last inequality we have denoted by $-\k_t$ a negative lower bound for the sectional curvatures of $(M,g^*)$ in $\ov{\Om}_{t-2}\setminus\Om_{t+1}$, and have used \cite[pag.\ 184]{Karchercomparison} (comparison with the mean curvature of geodesic balls in an hyperbolic model space). Since the right-hand side can be bounded by a positive constant $\Lambda(t)$, we have concluded the proof of \eqref{form of ice 2}, and thus of conclusions (b) and (c) of the theorem.
\end{proof}

\begin{proof}[Proof of Theorem \ref{thm rigidity}] {\it Preparation}: As in the proof of Theorem \ref{thm hk inq}, we reduce the case when $M=N\times(0,\infty)$,  ${\rm (H0)}$ and ${\rm (H1)}$ hold, ${\rm (H2)}$ holds on $(0,\infty)$ (i.e., $h'>0$ on $(0,\infty)$), ${\rm (H3)}$ and ${\rm (H4)}$ hold on $(0,b)$ (if $(M,g)\in\B_n$) {\bf or} ${\rm (H3)^*}$ holds on $(0,b)$ (if $(M,g)\in\B_n^*$); $f=h'\circ r$ is bounded on $M$ and the metric $g^*=f^{-2}\,g$ is such that $(M,g^*)$ is a complete Riemannian manifold; and, finally, $\Sigma$ and $\Omega$ satisfy \eqref{ab} in addition to assumptions ${\rm (A1)}$, ${\rm (A2)}$', and ${\rm (A3)}$'. Next, by testing \eqref{vale div thm} with vector fields compactly supported in $M\setminus(\ov\Sigma\setminus\Sigma)$ we see that
\[
H_\Sigma\equiv H_0\qquad\mbox{on $\Sigma$}\,,
\]
while testing \eqref{vale div thm} with $X=h\,\pa/\pa r$ and taking into account that $\Div^\Sigma X=(n-1)\,f$ on $\Sigma$ and $\Div(X)=n\,f$ on $M$ by \eqref{Div Gamma X}, we find, in the case $\pa\Om=\ov\Sigma$,
\begin{eqnarray*}
  (n-1)\,\int_\Sigma f\,d\H^{n-1}&=&\int_\Sigma\Div^\Sigma X\,d\H^{n-1}=H_0\,\int_\Sigma \langle X,\nu_\Om\rangle_g\,d\H^{n-1} \\
  &=&H_0\,\int_\Om\Div X=H_0\,n\,\int_\Om f\,,
\end{eqnarray*}
i.e., $H_\Sigma\equiv H_0>0$ (so that ${\rm (A2)}$ holds) and \eqref{conclusion a} holds as an equality; and, in the case $\pa\Om=\ov\Sigma\cup N_0$,
\begin{eqnarray*}
  (n-1)\,\int_\Sigma f\,d\H^{n-1}&=&\int_\Sigma\Div^\Sigma X\,d\H^{n-1}=H_0\,\int_\Sigma \langle X,\nu_\Om\rangle_g\,d\H^{n-1} \\
  &=&H_0\,\int_\Om\Div X\,d\H^n-\int_{N_0}\langle X,\pa/\pa r\rangle_g\,d\H^{n-1}
  \\
  &=&H_0\,n\,\int_\Om f-h(0)^n\,\vol(N)\,,
\end{eqnarray*}
i.e, $H_\Sigma\equiv H_0 >0$ (so that ${\rm (A2)}$ holds) and \eqref{conclusion b} holds as an equality; finally, Theorem \ref{thm white} and \eqref{vale div thm} imply the validity of assumption ${\rm (A3)}$. We can thus apply conclusion (c) of Theorem \ref{thm hk inq} to conclude that
\begin{eqnarray}
\label{dai 1}
  &&\mbox{$\Sigma$ is umbilical and has constant mean curvature in $(M,g)$}\,,
  \\\label{dai 2}
  &&\mbox{$M\setminus\Om$ has positive reach in $(M,g^*)$}\,,
  \\
  \label{dai T}
  &&\mbox{$(\mathcal{M}[h])'\,g_N(\nu_\Om,\nu_\Om)=0$ on $\Sigma$}\,,
\end{eqnarray}
with $\mathcal{M}[h]$ defined as in \eqref{def of M h}.

\medskip

\noindent {\it Conclusion of the proof if $(M,g)\in\B_n^*$}: In this case, \eqref{ab} and the validity of ${\rm (H3)^*}$ on $(0,b)$ implies that $(\mathcal{M}[h])'>0$ on $\Sigma$, so that \eqref{dai T} gives
\begin{equation}
  \label{normals facile}
  \mbox{if $p\in\Sigma$, then $\nu_\Om(p)$ is parallel to $(\pa/\pa r)|_p$}\,.
\end{equation}
Now, if $\phi$ is a local chart of $N$, defined on a ball $B$ in $\R^{n-1}$, then $\psi(x,t)=(\phi(x),t)$ defines a local chart of $M$ defined on the open set $V=B\times(0,\bar{r})$; clearly, $\psi^{-1}(\Om)$ is a set of finite perimeter in $V\subset\R^n$, with $\nu_{\psi^{-1}(\Om)}$ parallel to $e_n$ $\H^{n-1}$-a.e. on $\pa^*[\psi^{-1}(\Om)]$; by \cite[Exercise 15.18]{maggiBOOK}, $\psi^{-1}(\Om)\cap V$ is $\H^n$-equivalent to $B\times J$, where $J$ is a finite union of open intervals compactly contained in $(0,\bar{r})$; covering $N$ by such charts $\phi$, and going back to $M$, we conclude that $\Om$ is $\H^n$-equivalent to a $N_0\times J$. Then, the fact that $H_\Sigma$ is constant implies that $J$ is either equal to $(0,t_0)$ or to $(t_0,\bar{r})$ for some $t_0\in(0,\bar{r})$, and the theorem is proved.

\medskip

\medskip

\noindent {\it Conclusion of the proof if $(M,g)\in\B_n$}: Condition \eqref{dai 1} combined with the Codazzi equations implies that
\begin{equation}
\label{dai 3old}
  ({\rm Ric}_M)_p(\nu_\Om(p),\s_i(p))=0\,,\qquad\forall p\in\Sigma\,,i=1,...,n-1\,,
\end{equation}
provided $\{\s_i(p)\}_{i=1}^{n-1}$ is an $g$-orthonormal basis of $T_p\Sigma$; in particular,
\begin{equation}
    \label{dai 3}
    \mbox{if $p\in\Sigma$, then $\nu_\Om(p)$ is an eigenvector of ${\rm Ric}_p$}\,.
\end{equation}
Since ${\rm (H4)}$ implies that $(\pa/\pa r)|_p$ is a {\it simple} eigenvector of $({\rm Ric}_M)$ (with  eigenvalue $-(n-1)\,(h''/h)(r(p))$), it follows from \eqref{dai 3} that
\begin{equation}
  \label{normals}
  \mbox{if $p\in\Sigma$, then $\nu_\Om(p)$ is either orthogonal or parallel to $(\pa/\pa r)|_p$}\,.
\end{equation}
Now, by \eqref{ab}, there is $t_0>0$ and $p_0\in\ov\Sigma$ such that
\begin{equation}
  \label{def of r0}
  N\times(0,t_0)\subset\Om\,,\qquad p_0\in\ov\Sigma\cap N_{t_0}\subset\pa\Om\,.
\end{equation}
From here, {\it in the smooth case when $\ov\Sigma=\Sigma$} (i.e., in the case considered in \cite{brendle}), \eqref{normals} and \eqref{def of r0} immediately imply, first, that $\nu_\Om(p_0)=(\pa/\pa r)|_{p_0}$, and, second, that $N_{t_0}\subset \Sigma$; from which a sliding argument (also required and detailed below in the non-smooth case) proves the theorem.

\medskip

However, {\it in the non-smooth case}, we cannot immediately conclude the containment of $N_{t_0}$ into $\Sigma$, and, actually, it is not even clear if $\ov\Sigma$ is regular at the contact point $p_0$ defined in \eqref{def of r0}: indeed, the blow-up of (the multiplicity one varifold associated to) $\ov\Sigma$ at $p_0$ is an hyperplane with multiplicity possibly higher than one -- thus preventing the use of Allard's regularity theorem to infer $p_0\in\Sigma$. To exit this impasse we make crucial use of the positive reach property \eqref{dai 2}, which we use to prove the following {\it approximation property}:  for every $(p,\eta)\in\mathcal{N}^1(M\setminus\Om)$ there are a connected component $\Sigma'$ of $\Sigma$ and a sequence $\{p_j\}_j\subset\Sigma'$ such that
\begin{equation}
  \label{closure prop}
  \big(p_j,-f(p_j)\,\nu_{\Om}(p_j)\big)\to(p,\eta)\quad\mbox{in $\mathcal{N}^1(M\setminus\Om)$}\,.
\end{equation}
Indeed, by \eqref{dai 2} and Theorem \ref{thm kleinjohann}, there is $s_0>0$ such that, for every $s\in(0,s_0)$, the sets
\[
Z_s=\big\{x\in M:\dist^*(x,M\setminus\Om)=s\big\}\,,
\]
are $C^{1,1}$-hypersurfaces, and the map $\Psi_s(p,\eta)=\exp^*(p,s\,\eta)$ is bi-Lipschitz from $\mathcal{N}^1(M\setminus\Om)$ to $Z_s$. We notice that
\[
\mathcal{N}^1(M\setminus\Om)\lfloor\Sigma=\big\{(p,-f(p)\,\nu_\Om(p)):p\in\Sigma\big\}\,,
\]
and set
\[
Z_s^*=\Phi_s\big(\mathcal{N}^1(M\setminus\Om)\lfloor\Sigma\big)\subset Z_s\,.
\]
By arguing as in the proof of \eqref{prop: hk and n,h sets: eq13}, with the aid of Theorem \ref{thm Santilli riem} we find that,  for $\L^1$-a.e. $s\in(0,s_0)$, $Z_s^*$ is $\H^{n-1}$-equivalent to $Z_s$. For any such $s$, $Z^\ast_s $ is an open dense subset of $Z_s$, and we can find a sequence $\{q_j\}_j$, contained in same connected component $Z_s^{**}$ of $Z_s^*$, such that $q_j\to\Phi_s(p,\eta)\in Z_s$. Setting $\pi(q,\tau)=q$ for every $(q,\tau)\in TM$, we see that $p_j=\pi[(\Phi_s)^{-1}(q_j)]$ defines a sequence contained in a same connected subset $\pi[(\Phi_s)^{-1}(Z_s^{**})]$ of $\Sigma$, and such that
\[
\big(p_j,-f(p_j)\,\nu_\Om(p_j)\big)=(\Phi_s)^{-1}(q_j)\to (p,\eta)\,,
\]
in $TM$, thus proving \eqref{closure prop}. We now combine \eqref{closure prop} with the fact that, by definition of $p_0$ (recall \eqref{def of r0}) it holds
\begin{equation}
  \label{eta 3}
  -f(p_0)\,\frac{\pa}{\pa r}\Big|_{p_0}\in\mathcal{N}^1_{p_0}(M\setminus\Om)\,,
\end{equation}
to find a sequence $\{p_j\}_j$, contained in a connected component $\Sigma'$ of $\Sigma$, such that $(p_j,-f(p_j)\,\nu_\Om(p_j))\to (p_0,-f(p_0)(\pa/\pa r)|_{p_0})$ as $j\to\infty$. By \eqref{normals}, up to extracting subsequences, there are two alternatives: either
\begin{equation}
  \label{eta 4}
  \mbox{$-f(p_j)\,\nu_\Om(p_j)$ is parallel to $(\pa/\pa r)|_{p_j}$ for every $j$}\,,
\end{equation}
or $g_{p_j}(\nu_\Om(p_j),(\pa/\pa r)|_{p_j})=0$ for every $j$, where the latter is clearly contradictory, since $|\pa/\pa r|_g=1$. By smoothness and connectedness of $\Sigma'$, by \eqref{normals}, and since $\Sigma'$ contains points $p_j$ as in \eqref{eta 4} with $p_j\to p_0$ as $j\to\infty$, we conclude that $\Sigma'$ is an open connected subset of $N_{t_0}$. In fact, it must be $\Sigma'=N_{t_0}$, because the above argument, with $p_0$ replaced by a possible point $p_0'$ in the boundary of $\Sigma'$ relative to $N_{t_0}$, would lead to the contradiction that an open neighborhood of $p_0'$ in $N_{t_0}$ would be contained in $\Sigma'$ itself. We have thus proved that
\begin{equation}
  \label{eta 5}
  N_{t_0}\subset\Sigma\,.
\end{equation}
The same argument also shows that
\begin{equation}
  \label{eta 6}
  N_{t_0}\cap \ov{\Sigma\cap[N\times(t_0,\infty)]}=\varnothing\,.
\end{equation}
By \eqref{eta 6} we could then start sliding $N_t$ upwards to prove that either $\Om=N\times (0,t_0)$ with $M^\circ\cap\pa\Om=\ov\Sigma=\Sigma=N_{t_0}$, thus concluding the proof of the theorem, or we could find $t_1>t_0$ such that
\begin{equation}
  \label{def of t1}
  (t_0,t_1)\setminus M\setminus\ov\Om\,,\qquad p_1\in N_{t_1}\cap\ov\Sigma\subset M\setminus\Om\,.
\end{equation}
By construction, $-f(p_1)\,(\pa/\pa r)|_{p_1}\in\mathcal{N}_{p_1}^1(M\setminus\Om)$, and, by arguing as in the proof of \eqref{eta 5}, we would find
$N_{t_1}\subset\Sigma$, with $\nu_\Om=-(\pa/\pa r)|_{p_1}$ along $N_{t_1}$. In turn, this would give that $H_\Sigma$ is negative along $N_{t_1}$, a contradiction. This finally proves the theorem.
\end{proof}

\section{Rigidity and compactness theorem}\label{section proofs}
\begin{proof}[Proof of Theorem \ref{thm main}] Up to change $\Om$ with $M\setminus\Om$, since $\nu_\Om=-\nu_{M\setminus\Om}$ on $\pa^*\Om=\pa^*(M\setminus\Om)$, we can assume that $\l\ge0$. Let $ V_\Omega $ be the multiplicity one rectifiable varifold associated to $ M^\circ \cap  \partial^\ast \Omega $.  The distributional constant mean curvature condition \eqref{distributional CMC} imply lower density bounds on $\| V_\Omega \|$, which in turn imply
	$$ \mathcal{H}^{n-1}(\overline{\pa^\ast \Omega} \setminus \pa^\ast \Omega) =0. $$
	Therefore, it is not restrictive to assume that $ \Omega $ is an open set such that $ M^\circ \cap \partial \Omega $ is compact, $\overline{\pa^\ast \Omega} = \pa \Omega $ and
	\begin{equation}\label{gino2}
	\mathcal{H}^{n-1}(\pa \Omega \setminus \pa^\ast \Omega) =0.
	\end{equation}
	 (see for instance the construction in the proof of \cite[Lemma 6.2]{deRosaKolaSantilli}). Notice that $ \partial^\ast \Omega = \partial^\ast \overline{\Om} \subseteq \partial \overline{\Om} \subseteq \partial \Om $,
	 hence taking the closure we find that $ \partial \Omega = \partial \overline{\Om} $. By Allard's regularity theorem \cite{Allard}, if we set
\[
\Sigma =\Big\{x\in\spt \| V_\Om \|:\lim_{\rho\to 0^+}\frac{\|V_\Om\|(B_\rho(x))}{\om_{n-1}\,\rho^{n-1}}=1\Big\}\,,
\]
then $\Sigma $ is a smooth, embedded hypersurface and
\begin{equation}
  \label{gino 3}
  \Sigma=M^\circ\cap\pa^*\Om\,.
\end{equation}
We now check that the pair $(\Sigma,\Om)$ satisfies the assumptions ${\rm (A1)}$, ${\rm (A2)}$' and ${\rm (A3)}$' of Theorem \ref{thm rigidity}, thus concluding the proof of the theorem. Clearly, ${\rm (A3)}$' is equivalent to \eqref{distributional CMC}. Since $ M^\circ \cap \partial \Omega $ is compact, we infer that $ \overline{\Sigma} \subseteq M^\circ $; moreover \eqref{gino2} means that $ \mathcal{H}^{n-1}(\overline{\Sigma}\setminus \Sigma) =0 $. Henceforth ${\rm (A1)}$ holds. Concerning ${\rm (A2)}$', since $ M^\circ \cap \partial \Omega $ is compact, we notice that $1_\Om$ is constant in a neighborhood $A$ of $N_0$ in $M$; if $1_\Om=0$ on $A$ then $\pa\Om=\ov\Sigma$; if, otherwise $1_\Om=1$ on $A$, then $N_0\subset\pa\Om$, and thus $\pa\Om=N_0\cup\ov\Sigma$.
\end{proof}

\begin{proof}
  [Proof of Theorem \ref{thm compactness 1}] From $\H^n(\Om_j\Delta\Om)\to 0$ as $j\to\infty$ we easily deduce that for every $x_0\in M^\circ \cap \ov{\pa^*\Om}$ there is $x_j\in\pa^*\Om_j$ such that $x_j\to x_0$ in $M$; for, otherwise, there would be $\rho>0$, with $\ov{B}_\rho(x_0)\cap\ov{\pa^*\Om_j}=\varnothing$ for every $j$, and $X\in C^\infty_c(B_\rho(x_0))$ such that
  \begin{eqnarray*}
  1\!\!&=&\!\!\int_{B_\rho(x_0)\cap \pa^*\Om}\!\!\langle X,\nu_\Om\rangle_g\,d\H^{n-1}=\int_\Om\,\Div\,X\,d\H^n
  \\
  \!\!&=&\!\!\lim_{j\to\infty}\int_{\Om_j}\,\Div\,X\,d\H^n
  =\lim_{j\to\infty}\int_{B_\rho(x_0)\cap\pa^*\Om_j}\!\!\langle X,\nu_{\Om_j}\rangle_g\,d\H^{n-1}=0\,.
  \end{eqnarray*}
  We thus conclude that
  \begin{equation}
    \label{letsapplyit 1}
    M^\circ\cap\ov{\pa^*\Om}\subset N\times[a,b]\cc M^\circ\,.
  \end{equation}
  By $\H^n(\Om_j\Delta\Om)\to 0$ and, crucially, by ${\rm Per}(\Om_j)\to{\rm Per}(\Om)$, as $j\to\infty$, we see that the multiplicity one rectifiable varifolds $V_j$ associated to $\pa^*\Om_j$ converge, in the sense of varifolds on $M$, to the multiplicity one rectifiable varifold $V$ associated to $\pa^*\Om$: in particular, for every $X\in\X(M)$,
  \[
  \lim_{j\to\infty}\int_{M^\circ\cap\pa^*\Om_j}\Div^{\pa^*\Om_j}X\,d\H^{n-1}=\int_{M^\circ\cap\pa^*\Om}\Div^{\pa^*\Om}X\,d\H^{n-1}\,.
  \]
  Again by $\H^n(\Om_j\Delta\Om)\to 0$ as $j\to\infty$ and thanks to the divergence theorem $\small{\int}_{M^\circ\cap\pa^*\Om_j}\langle X,\nu_{\Om_j}\rangle_g\,d\H^{n-1}\to \small{\int}_{M^\circ\cap\pa^*\Om}\langle X,\nu_{\Om}\rangle_g\,d\H^{n-1}$ as $j\to\infty$. Therefore, \eqref{very weak} implies
  \begin{equation}
  \label{letsapplyit 2}
  \int_{M^\circ\cap\pa^*\Om}\Div^{\pa^*\Om}X\,d\H^{n-1}=\l\,\int_{M^\circ\cap\pa^*\Om}\langle X,\nu_{\Om}\rangle_g\,d\H^{n-1}\,.
  \end{equation}
  By \eqref{letsapplyit 1} and \eqref{letsapplyit 2} we can apply Theorem \ref{thm main} to $\Om$ and conclude the proof of the theorem.
\end{proof}

\section{Proof of Theorem \ref{thm main hyperbolic}}

The key observation to prove Theorem \ref{thm main hyperbolic} is contained in the following result, that can be proved employing the same method of Theorem \ref{thm hk inq}.  
\begin{theorem}\label{thm hk in substatic spaces}
	Suppose $(M,g)$ is a $ n $-dimensional Riemannian manifold (notice carefully that we do not assume this space to be geodesically complete) and $ f $ is a smooth positive function on $ M $ such that 
	\begin{equation}\label{eq substatic}
		f {\rm Ric} - \Der^2 f + (\Delta f)g \geq 0 \quad \textrm{on $ M $.}
	\end{equation} 
	Suppose $ \Omega \subseteq M $ is an open set with finite perimeter, with exterior unit-normal $ \nu_\Omega $, such that $ \overline{\Omega} $ is compact, and suppose $ \Sigma \subseteq \partial \overline{\Om} $ is a smooth embedded hypersurface such that $\overline{\Sigma} = \partial \overline{\Om} $ is a White $(n-1, \lambda)$-set of $(M,g)$, $ \mathcal{H}^{n-1}(\overline{\Sigma} \setminus \Sigma) =0 $, and $ H_\Sigma = \overrightarrow{H}_\Sigma \cdot \nu_\Omega $ is positive on $ \Sigma $. 
	
	Then 
	\begin{equation}\label{thm hk in substatic spaces inequality}
		n\int_{\Omega} f\, d\mathcal{H}^n \leq (n-1) \int_\Sigma \frac{f}{H_\Sigma}\, d\mathcal{H}^{n-1}.
	\end{equation}  
	If the equality holds, then there exists $ t_0 > 0 $ such that the sets 
	$$ \Sigma_t = \{p \in  \Omega: \dist_{g^\ast}(p, \overline{\Sigma}) = t\}, \quad \textrm{for $ 0 < t < t_0 $,} $$
	where $ g^\ast = \frac{g}{f^2} $, are closed embedded $ \mathcal{C}^{1,1} $-hypersurfaces, and for $ \mathcal{L}^1 $ a.e.\ $ t \in (0, t_0) $ there exists a smooth embedded umbilical hypersurface $ \Sigma^\ast_t \subseteq \Sigma_t $ such that $$ \mathcal{H}^{n-1}(\Sigma_t \setminus \Sigma^\ast_t) =0. $$
\end{theorem}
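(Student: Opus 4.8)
The plan is to follow the proof of Theorem~\ref{thm hk inq} almost line by line, taking advantage of two simplifications. First, since here $\overline{\Sigma}=\pa\overline{\Om}$ there is no ``horizon case'', so the whole of the argument producing \eqref{claim} -- the lower bound $\L(\Sigma)\ge h(0)^{n}\,\vol(N)$, and with it the vertical field $X=h\,\pa/\pa r$ and the auxiliary estimates \eqref{lsigma 4}, \eqref{lsigma 5} -- plays no role; second, only conclusions (a) and (c) of that proof are needed, and the condition \eqref{for H3 star} is irrelevant. The one genuinely new ingredient is the \emph{preparation of the ambient space}: in Theorem~\ref{thm hk inq} completeness came from the explicit warped-product structure, whereas here I would fix a relatively compact open set $U$ with $\overline{\Om}\subset U\cc M$, replace $(M,g^{*})$ (with $g^{*}=g/f^{2}$) by a \emph{complete} Riemannian manifold that agrees with it on a neighbourhood of $\overline U$, and observe that the substatic inequality \eqref{eq substatic}, the White $(n-1,\lambda)$-property of $\overline{\Sigma}$, and the positivity and boundedness of $f$ are only ever used on $\overline U$, while the geodesic flow issued from $\Sigma$ is confined to $U$ by construction, exactly as in \eqref{effetto Rsigma su Asigma}. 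By Lemma~\ref{lem: mh sets and conformal metric} (applicable since $\overline{\Sigma}$ is compact), $\overline{\Sigma}$ is a White $(n-1,\lambda^{*})$-set in the completed $(M,g^{*})$ for some $\lambda^{*}\ge0$.

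With this in place I would set $\nu^{*}_\Om=f\,\nu_\Om$, form the immersed geodesic flow $\Phi(x,t)=\exp^{*}(x,-t\,\nu^{*}_\Om(x))$, and define $R_\Sigma:\Sigma\to(0,\infty]$ as the first time at which $J^\Sigma\Phi_t(x)=0$ or $\Phi_t(x)\notin U$, with $A_\Sigma=\{(x,t):0<t<R_\Sigma(x)\}$; on $A_\Sigma$, $\Phi$ is a local embedding, so the $g$-mean curvature $H$ and second fundamental form ${\rm II}$ of the leaves are well defined with respect to $\nu$ as in \eqref{def: nu immersed}. Writing $T=(\Delta f)\,g-\Der^{2}f+f\,{\rm Ric}$, so that \eqref{eq substatic} reads $T\ge0$, Brendle's Riccati computations \cite[Proposition~3.2]{brendle} enter only through the sign of $T$; repeating them as in \eqref{screaming trees}--\eqref{prop: hk and n,h sets: eq6} and \eqref{T careful} yields, on $A_\Sigma$: $H>0$; $\pa_t J^\Sigma\Phi_t=-(f\circ\Phi)\,H\,J^\Sigma\Phi_t$; the monotonicity $\pa_t\!\big(\tfrac{f\circ\Phi}{H}\big)\le-\tfrac{(f\circ\Phi)^{2}}{n-1}$, refined by the fact that the defect $-\pa_t\!\big(\tfrac{f\circ\Phi}{H}\big)-\tfrac{(f\circ\Phi)^{2}}{n-1}$ equals the sum of the two manifestly nonnegative quantities $(f\circ\Phi)^{2}\big(\tfrac{|{\rm II}|^{2}}{H^{2}}-\tfrac1{n-1}\big)$ and $\tfrac{f\circ\Phi}{H^{2}}\,T(\nu,\nu)$; and the implication \eqref{infinito} (if $R_\Sigma(x)<\infty$ and $J^\Sigma\Phi_t(x)\to0$ then $H(x,t)\to\infty$).

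For the inequality I would introduce $u^{*}_\Sigma=\dist_{g^{*}}(\cdot,\overline{\Sigma})$, $R^{*}_\Sigma(x)=\rho^{*}_{\overline{\Sigma}}(x,-\nu^{*}_\Om(x))$, $A^{*}_\Sigma=\{(x,t):0<t<R^{*}_\Sigma(x)\}$ and $\Sigma^{*}_t=\Phi_t(\{R^{*}_\Sigma>t\})$, so that $\Phi$ is a diffeomorphism of $A^{*}_\Sigma$ onto an open subset of $\Om$, $A^{*}_\Sigma\subset A_\Sigma$, and $\Sigma^{*}_t$ is a smooth embedded hypersurface in $\Sigma_t$. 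Arguing exactly as for \eqref{prop: hk and n,h sets: eq1}--\eqref{prop: hk and n,h sets: eq13} one gets $\H^{n}(\Om\setminus\Phi(A^{*}_\Sigma))=0$: indeed $\Om\setminus\Phi(A^{*}_\Sigma)\subseteq{\rm Cut}^{*}(\overline{\Sigma})\cup\exp^{*}[\mathcal{N}(\overline{\Sigma})\llcorner(\overline{\Sigma}\setminus\Sigma)]$, with $\H^{n}({\rm Cut}^{*}(\overline{\Sigma}))=0$ by Theorem~\ref{thm normal bundles closed sets}-(v) and $\H^{n}(\exp^{*}[\mathcal{N}(\overline{\Sigma})\llcorner(\overline{\Sigma}\setminus\Sigma)])=0$ by Theorem~\ref{thm Santilli riem} applied to the White set $\overline{\Sigma}$ with $Z=\overline{\Sigma}\setminus\Sigma$ (admissible since $\H^{n-1}(\overline{\Sigma}\setminus\Sigma)=0$ and $\overline{\Sigma}$, being compact of finite $\H^{n-1}$-measure -- recall $\H^{n-1}(\Sigma)\le{\rm Per}(\Om)<\infty$ -- is a countable union of such sets); the coarea formula then gives $\H^{n-1}(\Sigma_t\setminus\Sigma^{*}_t)=0$ for $\L^{1}$-a.e.\ $t$. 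Combining $\pa_t J^\Sigma\Phi_t=-(f\circ\Phi)H\,J^\Sigma\Phi_t$ with the monotonicity above gives $\pa_t\big[(\tfrac{f\circ\Phi}{H})\,J^\Sigma\Phi_t\big]\le-\tfrac{n}{n-1}(f\circ\Phi)^{2}J^\Sigma\Phi_t$; integrating in $t$ over $(0,R_\Sigma(x))$ (the endpoint contribution at $R_\Sigma(x)$ being nonnegative since $(\tfrac{f\circ\Phi}{H})J^\Sigma\Phi_t$ is nonnegative and decreasing), then in $x\in\Sigma$, and using $J^{A_\Sigma}\Phi=(f\circ\Phi)J^\Sigma\Phi_t$, the area formula and $\H^{n}(\Om\setminus\Phi(A^{*}_\Sigma))=0$ to identify $\int_\Sigma\!\int_{0}^{R^{*}_\Sigma(x)}(f\circ\Phi)^{2}J^\Sigma\Phi_t\,dt\,d\H^{n-1}_x=\int_\Om f\,d\H^{n}$, one arrives at \eqref{thm hk in substatic spaces inequality}.

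Finally, in the equality case: the endpoint term, the defect terms, and the $A_\Sigma\setminus A^{*}_\Sigma$ contribution all vanish, which forces $\pa_t(\tfrac{f\circ\Phi}{H})=-\tfrac{(f\circ\Phi)^{2}}{n-1}$ on $A_\Sigma$ (hence $|{\rm II}|^{2}=\tfrac{H^{2}}{n-1}$, i.e.\ umbilicality of every leaf, and $T(\nu,\nu)=0$) and $R_\Sigma=R^{*}_\Sigma$ $\H^{n-1}$-a.e., hence everywhere by the lower semicontinuity of $R_\Sigma$ and continuity of $R^{*}_\Sigma$ as in \eqref{Rs are equal}. Running the argument following \eqref{Rs are equal}, with ``$N\times[0,b]$'' replaced by $\overline U$: if $R_\Sigma(x)<\infty$, then $\Phi_t(x)\to\pa U$ is impossible since $\Phi(A^{*}_\Sigma)\subset\Om\cc U$, so $J^\Sigma\Phi_t(x)\to0$ and, by \eqref{infinito}, $H(x,t)\to\infty$; integrating $\pa_t(\tfrac{f\circ\Phi}{H})=-\tfrac{(f\circ\Phi)^{2}}{n-1}$ and using $0<H_\Sigma\le\lambda$ on $\Sigma$ (the White bound) together with $\sup_{\overline U}f<\infty$ produces a uniform bound $R_\Sigma(x)=R^{*}_\Sigma(x)\ge c>0$. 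Then, exactly as in conclusion~(c) of Theorem~\ref{thm hk inq} (via \eqref{fib1}--\eqref{fib3}, Theorem~\ref{thm Santilli riem}, Theorem~\ref{thm normal bundles closed sets}-(vii) and Remark~\ref{remark pos reach}), $\rho^{*}_{M\setminus\Om}\ge c$ holds $\H^{n-1}$-a.e.\ on $\mathcal{N}^{1}(M\setminus\Om)$, so $M\setminus\Om$ has positive reach in $(M,g^{*})$. Since $M\setminus\Om$ has compact boundary $\overline{\Sigma}$ and $\dist_{g^{*}}(\cdot,M\setminus\Om)=\dist_{g^{*}}(\cdot,\overline{\Sigma})$ on $\Om$, Theorem~\ref{thm kleinjohann} applied with $A=M\setminus\Om$ furnishes $t_{0}>0$ such that each $\Sigma_t$, $0<t<t_{0}$, is a compact $C^{1,1}$-hypersurface; and for $\L^{1}$-a.e.\ such $t$ the smooth embedded hypersurface $\Sigma^{*}_t\subseteq\Sigma_t$ is umbilical (by the equality conditions) and satisfies $\H^{n-1}(\Sigma_t\setminus\Sigma^{*}_t)=0$. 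The main obstacle is precisely the localization step: one must verify that the substatic condition, the White bound and the bounds on $f$ are needed only on a fixed compact neighbourhood of $\overline{\Om}$, and stop the flow there; once this is done, every remaining step is the corresponding step in the proof of Theorem~\ref{thm hk inq} with the vertical field, the term $h(0)^{n}\vol(N)$, and the entire analysis of \eqref{claim} excised.
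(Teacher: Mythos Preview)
Your proposal is correct and follows essentially the same route as the paper's own proof: localize to a compact neighbourhood of $\overline{\Om}$, complete the conformal metric $g^*=g/f^2$ there, run the immersed $g^*$-geodesic flow from $\Sigma$, use the substatic condition \eqref{eq substatic} (i.e.\ $T\ge0$) as the sole input to Brendle's Riccati computation, and then repeat verbatim the steps leading to \eqref{conclusion a}; in the equality case, deduce positive reach of the complement, apply Kleinjohann, and read off umbilicality of $\Sigma_t^*$ together with $\H^{n-1}(\Sigma_t\setminus\Sigma_t^*)=0$. The only point where the paper is more specific than you is the completion step: rather than an ad hoc modification of $g^*$ outside $\overline U$, the paper invokes \cite[Corollary~B]{PigolaVeronelliExtension} to obtain a geodesically complete Riemannian extension $(M^*,g^*)$ of $(N,g/f^2)$ with $\partial M^*=\varnothing$, where $N$ is a compact smooth-bordered set with $\overline{\Om}\subset{\rm int}(N)$; this is exactly your ``main obstacle'', and the citation resolves it cleanly.
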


\begin{proof}
	Let $ N \subseteq M $ be a compact set  with smooth boundary such that $ \overline{\Omega} \subseteq {\rm int}(N) $. By \cite[Corollary B]{PigolaVeronelliExtension}, there exists a geodesically complete Riemannian extension $ (M^\ast, g^\ast) $ of $ (N, g/f^2) $ with $ \partial M^\ast = \varnothing $. We denote by $ \exp^\ast $ the exponential map of $(M^\ast, g^\ast ) $ and define 
	$$ \Phi: \Sigma \times [0, +\infty) \rightarrow M^\ast, \quad \Phi(x,t) = \Phi_t(x) = \exp^\ast(x, -tf(x)\nu_\Omega(x)) $$
	and, for $ x \in \Sigma $, 
	$$ R_\Sigma(x) = \min\{\inf\{t > 0 : J^\Sigma \Phi_t(x) =0\}, \, \inf\{t > 0 : \Phi_t(x) \notin {\rm int}(N)\}\}. $$
	The conclusion now can be obtained by tracing the argument of Theorem \ref{thm hk inq} that gives the Heintze-Karcher inequality \eqref{conclusion a}. We omit to repeat these details here, and we point out a couple of remarks. First, one needs to employ \eqref{eq substatic} in order to obtain \eqref{screaming trees} and \eqref{prop: hk and n,h sets: eq5} in the present setting: in fact, one can repeat verbatim the pointwise computations of \cite[Proposition 3.2]{brendle}, where only \eqref{eq substatic} is used. Second, to analyze the equality case, firstly we observe, exactly in the same way as in the proof of Theorem \ref{thm hk inq}, that $ M^\ast \setminus \Omega $ is a set of positive reach; henceforth, by Theorem \ref{thm kleinjohann}, there exists $ t_0 > 0 $ such that $ \Sigma_t $ is a compact embedded $ \mathcal{C}^{1,1} $-hypersurface for every $ 0 < t < t_0 $; then, combining \eqref{prop: hk and n,h sets: eq13} and \eqref{umbilicality condition} we infer that $ \Sigma^\ast_t $ is a smooth umbilical embedded hypersurface relatively open in $ \Sigma_t $ for every $ t > 0 $, and $ \mathcal{H}^{n-1}(\Sigma_t \setminus \Sigma^\ast_t) =0 $ for $ \mathcal{L}^1 $ a.e.\ $ t > 0 $.
\end{proof}

We consider the upper-half space model of the hyperbolic space $ \mathbb{H}^n $: namely $ \mathbb{H}^n = \mathbb{R}^n_+ = \{x \in \mathbb{R}^n : x_n > 0\} $ and 
$$ g_{\mathbb{H}^n} = \frac{1}{x_n^2} (dx_1^2 + \ldots + dx_n^2). $$

\begin{proof}[Proof of Theorem \ref{thm main hyperbolic}]
	Choose $ p \in \mathbb{H}^n \setminus \overline{\Omega} $ and define $$ r(x) = \dist_{\mathbb{H}^n}(x,p), \quad  f(x) = \cosh(r(x)), $$
	for $ x \in \mathbb{H}^n $.  Recalling that the geodesic spheres in $ \mathbb{H}^n $ of radius $ \rho $ are smooth embedded umbilical hypersurface with all principal curvatures equal to $ \coth(\rho) $,  a straightforward computation gives that 
	$$ \Der^2 f =\sinh(r) \, \Der^2 r + \cosh(r) \, dr \otimes dr, $$
	\begin{equation}\label{thm main hyperbolic eq 1}
		\Der^2f(x)(v,v) = \cosh(r(x)),
	\end{equation}  
	\begin{equation}\label{thm main hyperbolic eq 2}
		\Delta f(x) = n\cosh(r(x)),
	\end{equation}
	\begin{flalign*}
		f(x)\, {\rm Ric}_{\mathbb{H}^n}(v,v) & - \Der^2 f(x)(v,v) + \Delta f(x)\\
		& = -(n-1)\cosh(r(x)) -\cosh(r(x)) + n \cosh(r(x)) =0,
	\end{flalign*} 
	for every $ v \in T_x(\mathbb{H}^n) $ with $|v|=1 $. Henceforth, the Riemannian manifold $ (\mathbb{H}^n \setminus \{p\}, g_{\mathbb{H}^n}) $ endowed with the function $ f $ satisfies the hypothesis of Theorem \ref{thm hk in substatic spaces}. Arguing as in the proof of Theorem \ref{thm main}, we notice that it is not restrictive to assume that $ \Omega $ is an open set such that $ \partial \overline{\Om} = \partial \Omega = \overline{\partial^\ast \Omega} $ and $ \mathcal{H}^{n-1}(\partial \Omega \setminus \partial^\ast \Omega) =0 $; by Allard regularity theorem we also have that $ \partial^\ast \Omega $ is a smooth embedded hypersurface. We set $ \Sigma = \partial^\ast \Omega $. By Theorem \ref{thm white}, $ \overline{\Sigma} $ is a White $(n-1, \lambda)$-set of $ \mathbb{H}^n $. Finally, we need to check that $ H_\Sigma $ is positive and the couple $(\Omega, \Sigma)$ fulfills the equality in \eqref{thm hk in substatic spaces inequality}. The equality \eqref{thm main hyperbolic eq} clearly implies that $ H_\Sigma(x) = \lambda $ for $ x \in \Sigma $. Moreover, since by \eqref{thm main hyperbolic eq 1}  and \eqref{thm main hyperbolic eq 2} we have that 
	\begin{equation*}
		{\rm div}^\Sigma (\nabla f)(x) = \Delta f(x) - \Der^2 f(\nu_\Omega(x), \nu_\Omega(x)) = (n-1)f(x) \quad \textrm{for $ x \in \Sigma $,}
	\end{equation*}
	we infer from \eqref{thm main hyperbolic eq} that
	\begin{flalign*}
		(n-1)\int_\Sigma f\, d\mathcal{H}^{n-1} = \int_{\Sigma}{\rm div}^\Sigma (\nabla f)\, d\mathcal{H}^{n-1} & = \lambda \int_\Sigma \langle \nu_\Omega, \nabla f \rangle_{\mathbb{H}^n}\, d\mathcal{H}^{n-1}\\
		& = \lambda \int_\Omega \Delta f\, d\mathcal{H}^n = n\lambda \int_\Omega f\, d\mathcal{H}^n.
	\end{flalign*}
	This implies that $ H_\Sigma \equiv \lambda > 0 $ and $(\Omega, \Sigma) $ fulfills the equality case in \eqref{thm hk in substatic spaces inequality}.
	We conclude that there exists $ t_0 > 0 $ such that the sets 
	$$ \Sigma_t = \{p \in  \Omega: \dist_{g^\ast}(p, \overline{\Sigma}) = t\}, \quad \textrm{for $ \mathcal{L}^1 $ a.e.\ $ 0 < t < t_0 $,} $$
	where $ g^\ast = \frac{g_{\mathbb{H}^n}}{f^2} $, are closed embedded $ \mathcal{C}^{1,1} $-hypersurfaces with respect to the hyperbolic metric. Since the hyperbolic metric is conformally equivalent to the Euclidean metric, and since changing conformally the metric of the ambient space preserves umbilicity,  we infer that $ \Sigma_t $ is also umbilical with respect to the Euclidean metric. Henceforth, by \cite[Lemma 3.2]{deRosaKolaSantilli}, we conclude that each $ \Sigma_t $ is a finite disjoint union of Euclidean spheres. Now the conclusion follows letting $ t \to 0 $.

\appendix

\section{Assumption ${\rm (H3)^*}$ and models in General Relativity}\label{appendix h3star} In this appendix we check that the Reissner--Nordstrom manifolds satisfy assumption ${\rm (H3)^*}$, while the deSitter--Schwarzschild manifolds do not. This simple fact, combined with the analysis of equality cases in Brendle's Heintze--Karcher type inequalities, shows that a stronger stability mechanism for almost-CMC hypersurfaces is at a play in the R--N manifolds. Indeed, when ${\rm (H3)^*}$ holds, Brendle's argument also provides, in addition to almost-umbilicality, a direct control on the oscillation of the normals with respect to the radial directions as measured by $g_N(\nu_\Om,\nu_\Om)$; see, for example, condition \eqref{for H3 star}.

\medskip

Let us set $G=S^{n-1}\times(s_1,s_2)$ for some $0<s_1<s_2\le+\infty$. The dS--S manifold is then $(G,g_{{\rm dSS}})$, where
\[
g_{{\rm dSS}}=\frac{ds\otimes ds}{1-m\,s^{2-n}-\kappa s^2}+s^2\,g_{S^{n-1}}\,,
\]
with
\[
m>0\,,\qquad -\infty<\kappa<(n-2)\,\Big(\frac{4}{n^n\,m^2}\Big)^{1/(n-2)}\,.
\]
When $\k>0$ the upper bound on $\k$ guarantees that $1-m\,s^{2-n}-\kappa s^2$ has exactly two zeros $s_1<s_2$ on $(0,\infty)$, while if $\k\le 0$ we set $s_2=+\infty$, while $s_1$ is the unique zero of $1-m\,s^{2-n}-\kappa s^2$ on $(0,\infty)$. The R--N manifold is defined instead as $(G,g_{{\rm RN}})$, where
\[
g_{{\rm RN}}=\frac{ds\otimes ds}{1-m\,s^{2-n}+q^2\,s^{4-2n}}+s^2\,g_{S^{n-1}}\,,\qquad m>2\,q>0\,.
\]
In this case $s_1$ is the largest of the two solutions of $1-m\,s^{2-n}+q^2\,s^{4-2n}=0$ on $(0,\infty)$, while we set $s_2=+\infty$. Both examples can be modeled as
\[
g_\om=(1/\om(s))\,ds\otimes ds +s^2\,g_{S^{n-1}}
\]
for a smooth function $\om:(s_1,s_2)\to(0,\infty)$. We then define
\[
F(s)=\int_{s_1}^s\frac1{\sqrt\om}\quad \forall s\in(s_1,s_2)\,,\qquad h(t)=F^{-1}(t)\quad\forall t\in(0,\bar{r})\,,\bar{r}:=F(s_2)\,,
\]
so that $h(F(s))=s$ for every $s\in(s_1,s_2)$, and
\begin{eqnarray}
  \label{diffff}
  h'(F(s))=\sqrt{\om(s)}\,,\qquad h''(F(s))=\om'(s)\big/2\,,\qquad\forall t\in(0,\bar{r})\,.
\end{eqnarray}
Setting $M=S^{n-1}\times(0,\bar{r})$, the map $\phi: M\to G$ defined by $\phi(\tau,t)=(\tau,h(t))$ is such that
\[
(d\phi)^*g_\om=dr\otimes dr+h(r)^2\,g_{S^{n-1}}=:g_h\,,
\]
so that $(G,g_\om)$ is isometric to $(M,g)$, and rigidity of CMC-hypersurfaces  in dS--S and R--N manifolds can be studied in their $(M,g)$ representations. Since ${\rm (H0)}$ holds with $\rho=1$ we have
\[
 2\,\frac{h''}h+(n-2)\,\frac{(h')^2-1}{h^2}\Big|_{F(s)}=\frac{\om'(s)}s+(n-2)\,\frac{\om(s)-1}{s^2}=\ell(s)\,,
\]
for every $s\in(s_1,s_2)$. We thus find, in the case of $g_{{\rm dSS}}$, where $\om(s)=1-m\,s^{2-n}-\k\,s^2$,
\[
\ell(s)=\frac1s\,\Big((n-2)\,m\,s^{1-n}-2\k\,s\Big)+\frac{n-2}{s^2}\,\Big(-m\,s^{2-n}-\k\,s^2\Big)=-n\,\k\,,
\]
so that ${\rm (H3)}$ holds (but ${\rm (H3)^*}$ does not); in the case of $g_{{\rm RN}}$, where $\om(s)=1-m\,s^{2-n}+q^2\,s^{4-2n}$, we have an identical cancellation of the mass term, but thanks to the $q^2$-term we rather find
\[
\ell(s)=-\frac{(n-2)\,q^2}{s^{2n-2}}\,,
\]
so that $\ell(s)$ is strictly increasing on $(s_1,s_2)$, and  ${\rm (H3)^*}$ holds.

\end{proof}


\begin{thebibliography}{DMMN18}
	
	\bibitem[Alb15]{albano}
	P.~Albano.
	\newblock On the cut locus of closed sets.
	\newblock {\em Nonlinear Anal.}, 125:398--405, 2015.
	
	\bibitem[Ale62]{alexandrov}
	A.~D. Alexandrov.
	\newblock A characteristic property of spheres.
	\newblock {\em Ann. Mat. Pura Appl. (4)}, 58:303--315, 1962.
	
	\bibitem[All72]{Allard}
	W.~K. Allard.
	\newblock On the first variation of a varifold.
	\newblock {\em Ann. Math.}, 95:417--491, 1972.
	
	\bibitem[Ban82]{Bangert}
	V.~Bangert.
	\newblock Sets with positive reach.
	\newblock {\em Arch. Math. (Basel)}, 38(1):54--57, 1982.
	
	\bibitem[BE13]{BrEichJDG13}
	S.~Brendle and M.~Eichmair.
	\newblock Isoperimetric and {W}eingarten surfaces in the {S}chwarzschild
	manifold.
	\newblock {\em J. Differential Geom.}, 94(3):387--407, 2013.
	
	\bibitem[BE14]{BrendleEichINV2014}
	S.~Brendle and M.~Eichmair.
	\newblock Large outlying stable constant mean curvature spheres in initial data
	sets.
	\newblock {\em Invent. Math.}, 197(3):663--682, 2014.
	
	\bibitem[BFP24]{BorghiniFogagnoloPinamonti}
	S.~Borghini, M.~Fogagnolo, and A.~Pinamonti.
	\newblock The equality case in the substatic {H}eintze-{K}archer inequality.
	\newblock {\em Arch. Ration. Mech. Anal.}, 248(6):Paper No. 108, 26, 2024.
	
	\bibitem[Bre13]{brendle}
	S.~Brendle.
	\newblock Constant mean curvature surfaces in warped product manifolds.
	\newblock {\em Publ. Math. Inst. Hautes \'Etudes Sci.}, 117:247--269, 2013.
	
	\bibitem[CC93]{caffarellicordoba}
	L.~A. Caffarelli and A.~C\'{o}rdoba.
	\newblock An elementary regularity theory of minimal surfaces.
	\newblock {\em Differential Integral Equations}, 6(1):1--13, 1993.
	
	\bibitem[CCE16]{CarlottoChodoshEichmair}
	A.~Carlotto, O.~Chodosh, and M.~Eichmair.
	\newblock Effective versions of the positive mass theorem.
	\newblock {\em Invent. Math.}, 206(3):975--1016, 2016.
	
	\bibitem[CE20]{chodoshEichmairCrelle}
	O.~Chodosh and M.~Eichmair.
	\newblock On far-outlying constant mean curvature spheres in asymptotically
	flat {R}iemannian 3-manifolds.
	\newblock {\em J. Reine Angew. Math.}, 767:161--191, 2020.
	
	\bibitem[CE22]{ChodoshEichmairDuke}
	O.~Chodosh and M.~Eichmair.
	\newblock Global uniqueness of large stable {CMC} spheres in asymptotically
	flat {R}iemannian 3-manifolds.
	\newblock {\em Duke Math. J.}, 171(1):1--31, 2022.
	
	\bibitem[CESY21]{chodoshEichmairShiYu}
	O.~Chodosh, M.~Eichmair, Y.~Shi, and H.~Yu.
	\newblock Isoperimetry, scalar curvature, and mass in asymptotically flat
	riemannian $3$-manifolds.
	\newblock {\em Comm. Pure Appl. Math.}, 74, 2021.
	
	\bibitem[CESZ19]{chodoshEichmairShiZhu}
	O.~Chodosh, M.~Eichmair, Y.~Shi, and J.~Zhu.
	\newblock Characterization of large isoperimetric regions in asymptotically
	hyperbolic initial data.
	\newblock {\em Comm. Math. Phys.}, 368(2):777--798, 2019.
	
	\bibitem[CEV17]{chodoshEichmairVolkmanJDG}
	O.~Chodosh, M.~Eichmair, and A.~Volkmann.
	\newblock Isoperimetric structure of asymptotically conical manifolds.
	\newblock {\em J. Differential Geom.}, 105(1):1--19, 2017.
	
	\bibitem[Cho16]{chodoshCMP}
	O.~Chodosh.
	\newblock Large isoperimetric regions in asymptotically hyperbolic manifolds.
	\newblock {\em Comm. Math. Phys.}, 343(2):393--443, 2016.
	
	\bibitem[CL12]{CicaleseLeonardi}
	M.~Cicalese and G.~P. Leonardi.
	\newblock A selection principle for the sharp quantitative isoperimetric
	inequality.
	\newblock {\em Arch. Rat. Mech. Anal.}, 206(2):617--643, 2012.
	
	\bibitem[CM17]{ciraolomaggi2017}
	G.~Ciraolo and F.~Maggi.
	\newblock On the shape of compact hypersurfaces with almost-constant mean
	curvature.
	\newblock {\em Comm. Pure Appl. Math.}, 70(4):665--716, 2017.
	
	\bibitem[CRV21]{ciraoloRoncoroniVezzoni}
	G.~Ciraolo, A.~Roncoroni, and L.~Vezzoni.
	\newblock Quantitative stability for hypersurfaces with almost constant
	curvature in space forms.
	\newblock {\em Ann. Mat. Pura Appl. (4)}, 200(5):2043--2083, 2021.
	
	\bibitem[CV20]{ciraoloVezzoniIND}
	G.~Ciraolo and L.~Vezzoni.
	\newblock Quantitative stability for hypersurfaces with almost constant mean
	curvature in the hyperbolic space.
	\newblock {\em Indiana Univ. Math. J.}, 69(4):1105--1153, 2020.
	
	\bibitem[DL18]{camilloINVITE}
	C.~De~Lellis.
	\newblock Allard's interior regularity theorem: an invitation to stationary
	varifolds.
	\newblock In {\em Nonlinear analysis in geometry and applied mathematics.
		{P}art 2}, volume~2 of {\em Harv. Univ. Cent. Math. Sci. Appl. Ser. Math.},
	pages 23--49. Int. Press, Somerville, MA, 2018.
	
	\bibitem[DM19]{delgadinomaggi}
	M.~G. Delgadino and F.~Maggi.
	\newblock Alexandrov's theorem revisited.
	\newblock {\em Anal. PDE}, 12(6):1613--1642, 2019.
	
	\bibitem[DMMN18]{delgadinomaggimihailaneumayer}
	M.~G. Delgadino, F.~Maggi, C.~Mihaila, and R.~Neumayer.
	\newblock Bubbling with {$L^2$}-almost constant mean curvature and an
	{A}lexandrov-type theorem for crystals.
	\newblock {\em Arch. Ration. Mech. Anal.}, 230(3):1131--1177, 2018.
	
	\bibitem[DRKS20]{deRosaKolaSantilli}
	A.~De~Rosa, S.~Kolasi\'{n}ski, and M.~Santilli.
	\newblock Uniqueness of critical points of the anisotropic isoperimetric
	problem for finite perimeter sets.
	\newblock {\em Arch. Ration. Mech. Anal.}, 238(3):1157--1198, 2020.
	
	\bibitem[DW22]{delgadinoWeser}
	M.~G. Delgadino and D.~Weser.
	\newblock A heintze--karcher inequality with free boundaries and applications
	to capillarity theory.
	\newblock 2022.
	
	\bibitem[EM12]{EichmairMetzgerJDG12}
	M.~Eichmair and J.~Metzger.
	\newblock On large volume preserving stable {CMC} surfaces in initial data
	sets.
	\newblock {\em J. Differential Geom.}, 91(1):81--102, 2012.
	
	\bibitem[EM13a]{EichmairMetzgerJDG13}
	M.~Eichmair and J.~Metzger.
	\newblock Large isoperimetric surfaces in initial data sets.
	\newblock {\em J. Differential Geom.}, 94(1):159--186, 2013.
	
	\bibitem[EM13b]{EichmairMetzgerINV}
	M.~Eichmair and J.~Metzger.
	\newblock Unique isoperimetric foliations of asymptotically flat manifolds in
	all dimensions.
	\newblock {\em Invent. Math.}, 194(3):591--630, 2013.
	
	\bibitem[Fed59]{MR0110078}
	H.~Federer.
	\newblock Curvature measures.
	\newblock {\em Trans. Amer. Math. Soc.}, 93:418--491, 1959.
	
	\bibitem[Fed69]{FedererBOOK}
	H.~Federer.
	\newblock {\em Geometric measure theory}, volume 153 of {\em Die Grundlehren
		der mathematischen Wissenschaften}.
	\newblock Springer-Verlag New York Inc., New York, 1969.
	
	\bibitem[FP22]{fogagnoloPinamonti}
	M.~Fogagnolo and A.~Pinamonti.
	\newblock New integral estimates in substatic {R}iemannian manifolds and the
	{A}lexandrov theorem.
	\newblock {\em J. Math. Pures Appl. (9)}, 163:299--317, 2022.
	
	\bibitem[GP74]{MR0348781}
	V.~Guillemin and A.~Pollack.
	\newblock {\em Differential topology}.
	\newblock Prentice-Hall, Inc., Englewood Cliffs, N.J., 1974.
	
	\bibitem[Gro93]{Grove}
	K.~Grove.
	\newblock Critical point theory for distance functions.
	\newblock In {\em Differential geometry: {R}iemannian geometry ({L}os
		{A}ngeles, {CA}, 1990)}, volume~54 of {\em Proc. Sympos. Pure Math.}, pages
	357--385. Amer. Math. Soc., Providence, RI, 1993.
	
	\bibitem[GW79]{GW2}
	R.~E. Greene and H.~Wu.
	\newblock {$C^{\infty }$} approximations of convex, subharmonic, and
	plurisubharmonic functions.
	\newblock {\em Ann. Sci. \'{E}cole Norm. Sup. (4)}, 12(1):47--84, 1979.
	
	\bibitem[GW73]{GW1}
	R.~E. Greene and H.~Wu.
	\newblock On the subharmonicity and plurisubharmonicity of geodesically convex
	functions.
	\newblock {\em Indiana Univ. Math. J.}, 22:641--653, 1972/73.
	
	\bibitem[HHW23]{haslhoferOrWhite}
	R.~Haslhofer, O.~Hershkovits, and B.~White.
	\newblock Moving plane method for varifolds and applications.
	\newblock {\em Amer. J. Math.}, 145(4):1051--1076, 2023.
	
	\bibitem[HS22]{HugSantilli2022}
	D.~Hug and M.~Santilli.
	\newblock Curvature measures and soap bubbles beyond convexity.
	\newblock {\em Adv. Math.}, 411(part A):Paper No. 108802, 89, 2022.
	
	\bibitem[Hua10]{Huang}
	L.-H. Huang.
	\newblock Foliations by stable spheres with constant mean curvature for
	isolated systems with general asymptotics.
	\newblock {\em Comm. Math. Phys.}, 300(2):331--373, 2010.
	
	\bibitem[Hui96]{huisken_yau}
	S.-T. Huisken, G.and~Yau.
	\newblock Definition of center of mass for isolated physical systems and unique
	foliations by stable spheres with constant mean curvature.
	\newblock {\em Invent. Math.}, 124(1-3):281--311, 1996.
	
	\bibitem[JMPS23]{julin-morini-ponsiglione-spadaro}
	V.~Julin, M.~Morini, M.~Ponsiglione, and E.~Spadaro.
	\newblock The asymptotics of the area-preserving mean curvature and the
	{M}ullins-{S}ekerka flow in two dimensions.
	\newblock {\em Math. Ann.}, 387(3-4):1969--1999, 2023.
	
	\bibitem[JN23]{julin-nini}
	V.~Julin and J.~Niinikoski.
	\newblock Quantitative {A}lexandrov theorem and asymptotic behavior of the
	volume preserving mean curvature flow.
	\newblock {\em Anal. PDE}, 16(3):679--710, 2023.
	
	\bibitem[JT03]{MR1996769}
	L.~P. Jorge and F.~Tomi.
	\newblock The barrier principle for minimal submanifolds of arbitrary
	codimension.
	\newblock {\em Ann. Global Anal. Geom.}, 24(3):261--267, 2003.
	
	\bibitem[Kar89]{Karchercomparison}
	H.~Karcher.
	\newblock Riemannian comparison constructions.
	\newblock {\em Global differential geometry}, pages 170--222, 1989.
	
	\bibitem[Kle81]{Kleinjohann}
	N.~Kleinjohann.
	\newblock N\"{a}chste {P}unkte in der {R}iemannschen {G}eometrie.
	\newblock {\em Math. Z.}, 176(3):327--344, 1981.
	
	\bibitem[KS23]{kolasinski2021regularity}
	S.~Kolasinski and M.~Santilli.
	\newblock Regularity of the distance function from arbitrary closed sets.
	\newblock {\em Math. Ann.}, 386(1-2):735--777, 2023.
	
	\bibitem[LMS11]{lammMetzgerSchulze}
	T.~Lamm, J.~Metzger, and F.~Schulze.
	\newblock Foliations of asymptotically flat manifolds by surfaces of {W}illmore
	type.
	\newblock {\em Math. Ann.}, 350(1):1--78, 2011.
	
	\bibitem[LX19]{LiXiaJDG}
	J.~Li and C.~Xia.
	\newblock An integral formula and its applications on sub-static manifolds.
	\newblock {\em J. Differential Geom.}, 113(3):493--518, 2019.
	
	\bibitem[Mag12]{maggiBOOK}
	F.~Maggi.
	\newblock {\em Sets of finite perimeter and geometric variational problems: an
		introduction to Geometric Measure Theory}, volume 135 of {\em Cambridge
		Studies in Advanced Mathematics}.
	\newblock Cambridge University Press, 2012.
	
	\bibitem[Man03]{MR1941909}
	A.~C. Mantegazza, C.and~Mennucci.
	\newblock Hamilton-{J}acobi equations and distance functions on {R}iemannian
	manifolds.
	\newblock {\em Appl. Math. Optim.}, 47(1):1--25, 2003.
	
	\bibitem[MM16]{maggimihaila}
	F.~Maggi and C.~Mihaila.
	\newblock On the shape of capillarity droplets in a container.
	\newblock {\em Calc. Var. Partial Differential Equations}, 55(5):Art. 122, 42,
	2016.
	
	\bibitem[MR91]{montielros}
	S.~Montiel and A.~Ros.
	\newblock Compact hypersurfaces: the {A}lexandrov theorem for higher order mean
	curvatures.
	\newblock In {\em Differential geometry}, volume~52 of {\em Pitman Monogr.
		Surveys Pure Appl. Math.}, pages 279--296. Longman Sci. Tech., Harlow, 1991.
	
	\bibitem[MS19]{mennesantilli19}
	U.~Menne and M.~Santilli.
	\newblock A geometric second-order-rectifiable stratification for closed
	subsets of {E}uclidean space.
	\newblock {\em Ann. Sc. Norm. Super. Pisa Cl. Sci. (5)}, 19(3):1185--1198,
	2019.
	
	\bibitem[MS22]{morini-ponsiglione-spadaro}
	M.~Morini, M.and~Ponsiglione and E.~Spadaro.
	\newblock Long time behavior of discrete volume preserving mean curvature
	flows.
	\newblock {\em J. Reine Angew. Math.}, 784:27--51, 2022.
	
	\bibitem[Nas56]{Nash1956}
	J.~Nash.
	\newblock The imbedding problem for {R}iemannian manifolds.
	\newblock {\em Ann. of Math. (2)}, 63:20--63, 1956.
	
	\bibitem[Ner18]{nerzJDG18}
	C.~Nerz.
	\newblock Foliations by spheres with constant expansion for isolated systems
	without asymptotic symmetry.
	\newblock {\em J. Differential Geom.}, 109(2):257--289, 2018.
	
	\bibitem[NT09]{NevesTian1}
	A.~Neves and G.~Tian.
	\newblock Existence and uniqueness of constant mean curvature foliation of
	asymptotically hyperbolic 3-manifolds.
	\newblock {\em Geom. Funct. Anal.}, 19(3):910--942, 2009.
	
	\bibitem[NT10]{NevesTian}
	A.~Neves and G.~Tian.
	\newblock Existence and uniqueness of constant mean curvature foliation of
	asymptotically hyperbolic 3-manifolds. {II}.
	\newblock {\em J. Reine Angew. Math.}, 641:69--93, 2010.
	
	\bibitem[PV20]{PigolaVeronelliExtension}
	S.~Pigola and G.~Veronelli.
	\newblock The smooth {R}iemannian extension problem.
	\newblock {\em Ann. Sc. Norm. Super. Pisa Cl. Sci. (5)}, 20(4):1507--1551,
	2020.
	
	\bibitem[PX09]{PacardXu}
	F.~Pacard and X.~Xu.
	\newblock Constant mean curvature spheres in {R}iemannian manifolds.
	\newblock {\em Manuscripta Math.}, 128(3):275--295, 2009.
	
	\bibitem[QT07]{qingtian2007}
	J.~Qing and G.~Tian.
	\newblock On the uniqueness of the foliation of spheres of constant mean
	curvature in asymptotically flat 3-manifolds.
	\newblock {\em J. Amer. Math. Soc.}, 20(4):1091--1110, 2007.
	
	\bibitem[Ros87]{ros87ibero}
	A.~Ros.
	\newblock Compact hypersurfaces with constant higher order mean curvatures.
	\newblock {\em Rev. Mat. Iberoamericana}, 3(3-4):447--453, 1987.
	
	\bibitem[RZ12]{MR2954647}
	J.~Rataj and L.~Zaj\'{\i}\v{c}ek.
	\newblock Critical values and level sets of distance functions in {R}iemannian,
	{A}lexandrov and {M}inkowski spaces.
	\newblock {\em Houston J. Math.}, 38(2):445--467, 2012.
	
	\bibitem[Sak96]{MR1390760}
	T.~Sakai.
	\newblock {\em Riemannian geometry}, volume 149 of {\em Translations of
		Mathematical Monographs}.
	\newblock American Mathematical Society, Providence, RI, 1996.
	\newblock Translated from the 1992 Japanese original by the author.
	
	\bibitem[San19]{Santilli2019heintze-karcher}
	M.~Santilli.
	\newblock The {H}eintze-{K}archer inequality for sets of finite perimeter and
	bounded mean curvature.
	\newblock {\em arXiv:1908.05952v1, Differential Geometry}, 2019.
	
	\bibitem[San20a]{santilliAMPA20}
	M.~Santilli.
	\newblock Fine properties of the curvature of arbitrary closed sets.
	\newblock {\em Ann. Mat. Pura Appl. (4)}, 199(4):1431--1456, 2020.
	
	\bibitem[San20b]{SantilliBMS}
	M.~Santilli.
	\newblock Normal bundle and {A}lmgren's geometric inequality for singular
	varieties of bounded mean curvature.
	\newblock {\em Bull. Math. Sci.}, 10(1):2050008, 24, 2020.
	
	\bibitem[San24]{Santilli24}
	M.~Santilli.
	\newblock Finite total curvature and soap bubbles with almost constant
	higher-order mean curvature.
	\newblock {\em Int. Math. Res. Not. IMRN}, (17):12111--12135, 2024.
	
	\bibitem[Sch04]{schatzle}
	R.~Sch{\"a}tzle.
	\newblock Quadratic tilt-excess decay and strong maximum principle for
	varifolds.
	\newblock {\em Ann. Sc. Norm. Super. Pisa Cl. Sci. (5)}, 3(1):171--231, 2004.
	
	\bibitem[Sch25]{scheuer}
	J.~Scheuer.
	\newblock Stability from rigidity via umbilicity.
	\newblock {\em Adv. Calc. Var.}, 18(2):381--405, 2025.
	
	\bibitem[Sim83]{SimonLN}
	L.~Simon.
	\newblock {\em Lectures on geometric measure theory}, volume~3 of {\em
		Proceedings of the Centre for Mathematical Analysis}.
	\newblock Australian National University, Centre for Mathematical Analysis,
	Canberra, 1983.
	
	\bibitem[SX22]{scheuerXia22}
	J.~Scheuer and C.~Xia.
	\newblock Stability for {S}errin's problems and {A}lexandroff's theorem in
	warped product spaces.
	\newblock {\em Int. Math. Res. Not.}, 2022.
	
	\bibitem[Whi16]{whiteMC}
	B.~White.
	\newblock Controlling area blow-up in minimal or bounded mean curvature
	varieties.
	\newblock {\em J. Differential Geom.}, 102(3):501--535, 2016.
	
\end{thebibliography}
\end{document}